\renewcommand{\section}{%
\@startsection{section}{1}%
  \z@{.7\linespacing\@plus\linespacing}{.5\linespacing}%
 {\normalfont\large\bfseries\centering}}
\newtheorem{theorem}{Theorem}[section]
\newtheorem{conjecture}[theorem]{Conjecture}
\newtheorem{lemma}[theorem]{Lemma}
\newtheorem{proposition}[theorem]{Proposition}
\newtheorem*{theorem*}{Theorem} 
\newtheorem*{corollary*}{Corollary}
\newtheorem*{conjecture*}{Conjecture}
\newtheorem*{lemma*}{Lemma}
\newtheorem*{proposition*}{Proposition}
\newtheorem*{problem*}{Problem}
\newtheorem*{axiom*}{Axiom}
\newtheorem*{example*}{Example}
\newtheorem*{exercise*}{Exercise}
\theoremstyle{definition}
\newtheorem{remark}[theorem]{Remark}
\newtheorem*{remark*}{Remark}
\newtheorem*{definition*}{Definition}
\renewcommand{\l}{\left}
\renewcommand{\r}{\right}
\newcommand{\eps}{\varepsilon}
\newcommand{\N}{{\mathbb N}}
\newcommand{\R}{{\mathbb R}}
\newcommand{\im}{{\rm Im}}
\newcommand{\re}{{\rm Re}}
\newcommand{\ds}{\displaystyle}
\newcommand{\del}{\partial}
\newcommand{\sgn}{{\rm sgn}}
\newcommand{\wto}{\rightharpoonup}
\def\norm[#1]{\left\Vert #1 \right\Vert}
\def\tbra[#1,#2]{\left\langle #1 , #2\right\rangle} 
\def\rbra[#1,#2]{\left( #1 , #2 \right)} 
\def\sbra[#1,#2]{\left[ #1 , #2 \right]} 
\newcommand{\scA}{{\mathscr A}}
\newcommand{\scB}{{\mathscr B}}
\newcommand{\scG}{{\mathscr G}}
\newcommand{\scK}{{\mathscr K}}
\newcommand{\scM}{{\mathscr M}}
\newcommand{\cE}{{\mathcal E}}
\newcommand{\cG}{{\mathcal G}}
\newcommand{\cI}{{\mathcal I}}
\newcommand{\cK}{{\mathcal K}}
\newcommand{\cL}{{\mathcal L}}
\newcommand{\cM}{{\mathcal M}}
\newcommand{\cP}{{\mathcal P}}
\newcommand{\cS}{{\mathcal S}}
\begin{document}

\title[Potential well theory for DNLS]{Potential well theory for the derivative nonlinear Schr\"{o}dinger equation}


\author{Masayuki Hayashi}
\address{Research Institute for Mathematical Sciences, Kyoto University, Kyoto 606-8502, Japan}
\curraddr{}
\email{hayashi@kurims.kyoto-u.ac.jp}
\thanks{}

\subjclass[2010]{Primary 35Q55, 35Q51, 37K05; Secondary 35A15}

\keywords{derivative nonlinear Schr\"{o}dinger equation, solitons, potential well, variational methods}


\dedicatory{}


\begin{abstract}
We consider the following nonlinear Schr\"{o}dinger equation of derivative type:
\begin{equation}
\label{eq:1}
i \partial_t u + \partial_x^2 u +i |u|^{2} \partial_x u +b|u|^4u=0 , \quad (t,x) \in \R \times \R, \ b \in \R .
\end{equation}
If $b=0$, this equation is known as a gauge equivalent form of well-known  derivative nonlinear Schr\"{o}dinger equation (DNLS), which is mass critical and completely integrable. The equation \eqref{eq:1} can be considered as a generalized equation of DNLS while preserving mass criticality and Hamiltonian structure. For DNLS it is known that if the initial data $u_0\in H^1(\mathbb{R})$ satisfies the mass condition $\| u_0\|_{L^2}^2 <4\pi$, the corresponding solution is global and bounded. In this paper we first establish the mass condition on \eqref{eq:1} for general $b\in\R$, which is exactly corresponding to $4\pi$-mass condition for DNLS, and then characterize it from the viewpoint of potential well theory. We see that the mass threshold value gives the turning point in the structure of potential wells generated by solitons. In particular, our results for DNLS give a characterization of both $4\pi$-mass condition and algebraic solitons.


\end{abstract}

\maketitle

\tableofcontents

\numberwithin{equation}{section} 
\section{Introduction}
\subsection{Setting of the problem}
In this paper, we consider the following nonlinear Schr\"{o}dinger equation of derivative type:
\begin{equation}
 \label{eq:1.1}
i \partial_t u + \partial_x^2 u +i |u|^{2} \partial_x u +b|u|^4u=0 , \quad (t,x) \in \R \times \R, \ b \in \R .
\end{equation}
The equation \eqref{eq:1.1} is $L^2$-critical (mass critical) in the sense that the equation and $L^2$-norm are invariant under the scaling transformation 
\begin{align}
\label{eq:1.2}
u_{\lambda}(t,x)=\lambda^{\frac{1}{2}} u(\lambda^2 t,\lambda x), \quad \lambda >0. 
\end{align}
This equation has the following conserved quantities:  
\begin{align*}
\tag{Energy}
 E(u)&:=\frac{1}{2}\left\| \partial_x u  \right\|_{L^2}^2 
  - \frac{1}{4}\rbra[i|u|^2\del_xu ,u]
-\frac{b}{6}\| u\|_{L^6}^6 ,
\\
\tag{Mass}
 M(u)&:= \| u \|_{L^2}^2,
\\
\tag{Momentum}
P(u)&:=\rbra[i\partial_x u,u],
\end{align*}
where $\rbra[\cdot,\cdot]$ is an inner product defined by
\begin{align*}
\rbra[v ,w] :=\re\int_{\R} v(x)\overline{w(x)} dx\quad\text{for}~v, w\in L^2(\R ).
\end{align*}
We note that (\ref{eq:1.1}) can be rewritten as the following Hamiltonian form:
\begin{align}
\label{eq:1.3}
i \partial_t u= E ' (u).
\end{align}
When $b=0$, the equation
\begin{equation*}
\label{DNLS}
\tag{DNLS}
i \partial_t u + \partial_x^2 u +i |u|^{2} \partial_x u =0 , \quad (t,x) \in \R \times \R
\end{equation*}
is known as a standard derivative nonlinear Schr\"{o}dinger equation (DNLS). This equation has several gauge equivalent forms. If we apply the following gauge transformation to the solution of \eqref{DNLS}
\begin{align}
\label{eq:1.4}
\psi (t,x) = u (t,x)\exp\l( -\frac{i}{2} \int_{-\infty}^{x} |u (t,x)|^2 dx\r) ,
\end{align}
then $\psi$ satisfies the following equation:
\begin{equation}
\label{eq:1.5}
i \partial_t \psi + \partial_x^2 \psi +i  \partial_x( |\psi |^{2} \psi )=0 , \quad (t,x) \in \R \times \R.
\end{equation}
The equation (\ref{eq:1.5}) originally appeared in plasma physics as a model for the propagation of Alfv\'{e}n waves in magnetized plasma (see \cite{MOMT76, M76}), and it is known to be completely integrable (see \cite{KN78}). 

The equation (\ref{eq:1.1}) can be considered as a generalized equation of (\ref{DNLS}) while preserving both $L^2$-criticality and Hamiltonian structure. We note that complete integrable structure is known only for the case $b=0$. In this paper we study \eqref{eq:1.1} by variational approach not depending on complete integrability, and that enables us to treat \eqref{eq:1.1} for general $b\in\R$ in a unified way. The main aim of this paper is to investigate the structure of (\ref{eq:1.1}) from the viewpoint of potential well theory. 

\subsection{DNLS and mass critical NLS}
There is a large literature on the Cauchy problem for \eqref{DNLS}; see \cite{TF80, TF81, H93, HO92, T99, BL01, CKSTT01, CKSTT02, Wu13, Wu15, GW17, FHI17, H18, JLPS18, JLPS18a} and references therein. Here we are mainly interested in the results of energy space $H^1(\R)$. Hayashi and Ozawa \cite{HO92} proved that \eqref{DNLS} is globally well-posed in $H^1(\R)$ under the mass condition $M (u_0) < 2\pi$,  where $u_0\in H^1(\R)$ is the initial data.
The mass condition was recently improved by Wu \cite{Wu15} to $M(u_0) < 4\pi$. 
We note that the value $4\pi$ corresponds to the mass of algebraic solitons of (\ref{DNLS}) and algebraic solitons correspond to the threshold case in the existence of solitons.
We will discuss the solitons for \eqref{DNLS} in more detail later.
Fukaya, the author 
and Inui \cite{FHI17} gave a sufficient condition for global existence in $H^1(\R)$ covering Wu's result by variational approach. In particular they established the global results for the threshold case $M (u_0)=4\pi$ and $P(u_0)<0$, and for the oscillating data with arbitrarily large mass. We note that in these global results by PDE approach the class of the solution lies in $(C\cap L^{\infty})(\R, H^1(\R))$.

Recently, in \cite{JLPS18} it was proved by inverse scattering approach that \eqref{DNLS} is globally well-posed for any initial data belonging to weighted Sobolev space $H^{2,2} (\R )$, where
\begin{align*}
H^{2,2} (\R ) := \l\{ u\in H^2 (\R )~;~\braket{\cdot}^2u\in L^2(\R ) \r\}, \quad \braket{x}:=(1+x^2)^{1/2}.
\end{align*}
The class of the solution lies in $C(\R, H^{2,2}(\R))$ for the initial data $u_0\in H^{2,2} (\R )$.
This is the strong result obtained by using complete integrable structure, but the global well-posedness in the energy space $H^1(\R)$ above the mass threshold $4\pi$ is not clear yet. We note that the algebraic solitons do not belong to $H^{2,2}(\R)$, but they belong to $H^1(\R)$. This fact implies that the difference of function spaces between $H^1(\R)$ and $H^{2,2}(\R)$ is a delicate issue for \eqref{DNLS}.


\eqref{DNLS} is closely related to the focusing mass critical nonlinear Schr\"odinger equation in one space dimension.\footnote{It is also related to other mass critical dispersive equations such as the quintic Korteweg-de Vries equation and the modified Benjamin--Ono equation, but we will not discuss them further here.} Let us consider the following nonlinear Schr\"odinger equation:
\begin{align}
\label{NLS}
\tag{NLS} i\del_t v+\del_x^2 v+\frac{3}{16}|v|^4v=0, \quad (t,x) \in \R \times \R.
\end{align}
By using the following gauge transformation to the solution of \eqref{DNLS}
\begin{align}
\label{eq:1.6}
v(t,x) = u(t,x)\exp\l( \frac{i}{4} \int_{-\infty}^{x} |u(t,x)|^2 dx\r), 
\end{align}
we have another gauge equivalent form:
\begin{align}
\label{DNLS'}
\tag{DNLS$'$}
i \partial_t v + \partial_x^2 v +\frac{i}{2} |v|^{2} \del_x v -\frac{i}{2}v^2\del_x \overline{v} +\frac{3}{16}|v|^4v =0, \quad (t,x) \in \R \times \R.
\end{align}
The equations \eqref{NLS} and \eqref{DNLS'} have mass critical structure and the same conserved quantities in the forms of
\begin{align}
\tag{Energy}
&{\cE}(v)= \frac{1}{2}\| \partial_x v\|_{L^2}^2  -\frac{1}{32} \| v\|_{L^6}^6 ,\\
\tag{Mass}
&{\cM}(v) =\| v\|_{L^2}^2. 
\end{align}
Moreover, they have the same standing wave solutions as $v_{\omega}(t,x)=e^{i\omega t}Q_{\omega}(x)$, where $\omega >0$ and $Q_{\omega}>0$ is the positive solution of
\begin{align*}
-Q'' +\omega Q -\frac{3}{16}Q^5=0, ~x\in \R.
\end{align*}
From the work of Weinstein \cite{W82}, we have the following sharp Gagliardo--Nirenberg inequality
\begin{align}
\label{GN1}
\frac{1}{32}\| f\|_{L^6}^6 \leq \frac{1}{2}\l( \frac{\cM(f)}{\cM(Q_{\omega})} \r)^2\| \partial_{x}f\|_{L^2}^2
~\text{for all}~f\in H^1(\R).
\end{align}
If the initial data $v_0\in H^1(\R)$ satisfies $\cM(v_0)<\cM (Q_{\omega}) =2\pi$, by \eqref{GN1} and conservation laws of the mass and the energy, we deduce that the corresponding $H^1(\R)$-solution of \eqref{DNLS'} or \eqref{NLS} exists globally in time, and satisfies
\begin{align}
\label{eq:1.8}
\frac{1}{2}\l( 1- \l( \frac{\cM(v_0)}{\cM(Q_{\omega})} \r)^2\r) \| \del_x v(t)\|_{L^2}^2 \leq \cE(v_0)
~\text{for all } t\in\R.
\end{align}
In such a way $2\pi$-mass condition for \eqref{DNLS} was established  in \cite{HO92}. For the case of \eqref{NLS}, it is known that this mass condition is sharp, in the sense that for any $\rho \geq 2\pi$, there exists initial data $v_0 \in H^1(\R)$ such that $\cM (v_0) =\rho$ and such that the corresponding 
$H^1(\R)$-solution to (\ref{NLS}) blows up in finite time. 

\subsection{Potential well theory for NLS}
\label{sec:1.3}
Here we review the mass condition for \eqref{NLS} from the viewpoint of potential well theory. For $\omega>0$ we define the action functional by
\begin{align*}
\cS_{\omega}(\varphi) =\cE (\varphi)+\frac{\omega}{2} \cM (\varphi). 
\end{align*}
We note that $Q_{\omega}$ is a critical point of $\cS_{\omega}$, i.e., $\cS_{\omega}'(Q_{\omega}) =0$. We consider the following subsets of the energy space:
\begin{align*}
\scA_{\omega}:=& \l\{ \varphi\in H^1(\R ):  \cS_{\omega}(\varphi ) < \cS_{\omega}(Q_{\omega})\r\}, \\ 
 \scA :=&\bigcup_{ \substack{\omega>0 } } \scA_{\omega}.
\end{align*}
The set $\scA$ describes the data below the ground state in the sense of action. 
Since $\cE(Q_{\omega})=0$ and $\cM(Q_{\omega})=2\pi$, the set $\scA$ is decomposed into two disjoint sets as $\scA = \scG \cup \scB$, where
\begin{align*}
\scG &:=\l\{ \varphi \in H^1(\R) : \cM(\varphi) <2\pi \r\}, \\
\scB &:=\l\{ \varphi \in H^1(\R) :  \cM(\varphi) >2\pi , \cE (\varphi) <0 \r\}.
\end{align*}
For \eqref{NLS} the global behavior of solutions to the initial data in $\scA$ is well understood now. As can be seen above, the solution for the initial data in $\scG$ is global, and furthermore scatters both forward and backward in time (see \cite{D15}). On the other hand, the solution for the initial data in $\scB$ blows up both forward and backward in finite time (see \cite{OT91}). 

One can also give a variational characterization to the sets $\scG$ and $\scB$ as follows. We define the functional by $\cK_{\omega}(\varphi)=\l.\frac{d}{d\lambda}\cS_{\omega}(\lambda\varphi)\r|_{\lambda =1}$ for $\omega >0$, and introduce the following subsets of the energy space:
\begin{align*}
\scA_{\omega}^+ :=&\l\{ \varphi\in\scA_{\omega} :\cK_{\omega}(\varphi) \geq 0 \r\},\\
\scA_{\omega}^- :=&\l\{ \varphi\in\scA_{\omega} :\cK_{\omega}(\varphi) < 0\r\},\\
 \scA^{+} :=&\bigcup_{ \substack{\omega>0 } } \scA_{\omega}^{+}, ~
 \scA^{-} :=\bigcup_{ \substack{\omega>0 } } \scA_{\omega}^{-}.
\end{align*}
Then, one can easily prove that the sets $\scA_{\omega}^+$ and $\scA_{\omega}^-$ are invariant under the flow of \eqref{NLS},\footnote{If the initial data in $\scA_{\omega}^+$ (resp. $\scA_{\omega}^-$), then the corresponding solution of \eqref{NLS} also belongs to $\scA_{\omega}^+$ (resp. $\scA_{\omega}^-$) as long as the solutions exists.}
 and that
\begin{align}
\label{eq:1.9}
\scG =\scA^+ , ~\scB =\scA^-.
\end{align}
The key point in the proof for this claim is variational characterization of $Q_{\omega}$ on the Nehari manifold: $\medmuskip=1mu \l\{ \varphi\in H^1(\R)\setminus\{ 0\}: \cK_{\omega}(\varphi)=0\r\}$. We note that the set 
\begin{align*}
\scA^0= \l\{  \varphi\in H^1(\R) :\cM(\varphi)=2\pi, \cE(\varphi)=0\r\} 
\end{align*}
gives the boundary of both $\scG$ and $\scB$. By variational characterization of $Q_{\omega}$, this set is actually equal to the standing wave up to translations and phase shifts, i.e.,
\begin{align}
\label{eq:1.10}
\scA^0 = \l\{ e^{i\theta}Q_{\omega}(\cdot -y): \theta, y \in\R, \,\omega >0\r\} . 
\end{align}


\subsection{Two types of solitons}
\label{sec:1.4}
Despite many similarities to \eqref{NLS}, $2\pi$-mass condition is not sharp for \eqref{DNLS}.
One of this reason comes from the difference of soliton structure.
It is well-known that (\ref{DNLS}) has a two-parameter family of solitons in the form of $u_{\omega ,c}(x,t)=e^{i\omega t}\phi_{\omega ,c}(x-ct)$, where $-2\sqrt{\omega}<c\leq 2\sqrt{\omega}$, and $\phi_{\omega ,c}$ is the complex-valued solution of the equation
\begin{align*}
-\phi'' +\omega\phi +ic\phi' -i|\phi|^2\phi' =0, ~x\in \R.
\end{align*}
An interesting property for \eqref{DNLS} is that the equation has algebraic solitons (the case $c=2\sqrt{\omega}$) as well as bright solitons (the case $\omega >c^2/4$); see \cite{KN78}.  Actually we have the following explicit decay of these solitons at infinity; 
\begin{align*}
\begin{array}{ll}
\text{if}~\omega >c^2/4, &|\phi_{\omega ,c}(x)| \sim e^{-\sqrt{4\omega -c^2}|x|}\\[5pt]
\text{if}~c=2\sqrt{\omega}, &|\phi_{\omega ,c}(x)| \sim (c|x|)^{-1}
\end{array}
\text{as}~ |x| \gg 1.
\end{align*}
\indent
We note that the following curve
\begin{align}
\label{eq:1.11}
\R^+ \ni\omega \mapsto (\omega , 2s\sqrt{\omega}) \in \R^2 ~\text{for}~s\in (-1,1]
\end{align}
gives the scaling of the solitons for \eqref{DNLS}. Indeed we have the following relation
\begin{align}
\label{eq:1.12}
\phi_{\omega ,2s\sqrt{\omega}}(x) =\omega^{1/4} \phi_{1,2s} (\sqrt{\omega}x),
\end{align}
which especially implies that the mass of the solitons is invariant on this curve. From the explicit formulae of the mass of the soliton (see \cite{CO06}), we deduce that the function
\begin{align}
\label{eq:1.13}
(-1, 1]\ni s\mapsto M(\phi_{1 ,2s} ) =8\tan^{-1} \sqrt{ \frac{1+s}{1-s} } \in (0,4\pi ]
\end{align}
is strictly increasing and surjective. In particular we note that the value $4\pi$ corresponds to the mass of algebraic solitons. This property is quite different from \eqref{NLS}. By using Galilean invariance of the equation \eqref{NLS}, one can generate a two-parameter family of solitary waves
\begin{align*}
v_{\omega ,c}(t,x) = e^{i\omega t+\frac{i}{2}cx-\frac{i}{4}c^2t}Q_{\omega}(x-ct)
\end{align*}
from standing waves, but their mass is always $2\pi$ for any $\omega >0$ and $c\in\R$.

The stability of the solitons have been also studied in previous works. Colin and Ohta \cite{CO06} proved that all bright solitons for \eqref{DNLS} are orbitally stable. Their proof depends on variational arguments, which are closely related to the work of Shatah \cite{S83}. See also \cite{GW95} for partial results before \cite{CO06}. On the other hand, the orbital stability or instability for algebraic solitons is still an open problem. 

When $b>0$, the equation \eqref{eq:1.1} still has two types of solitons, but situation becomes different due to the focusing effect from the quintic term. Ohta \cite{O14} proved that for each $b>0$ there exists a unique $s^*=s^*(b) \in (0,1)$ such that the soliton $u_{\omega ,c}$ is orbitally stable if $-2\sqrt{\omega}<c<2s^*\sqrt{\omega}$, and orbitally unstable if $2s^*\sqrt{\omega}<c<2\sqrt{\omega}$. In \cite{NOW17} it was proved that the algebraic soliton $u_{\omega ,2\sqrt{\omega}}$ is orbitally unstable when $b>0$ is sufficiently small. If we observe the momentum of the solitons, the momentum is positive in the stable region, and negative in the unstable region; see Figure \ref{fig:1}. This indicates that the momentum of the soliton has an essential effect on the stability. In the borderline case $c=2s^*\sqrt{\omega}$, the momentum of the solitons is zero, and the orbital stability or instability in this case remains an open problem. 
\begin{figure}[t]
 \includegraphics[width=6cm]{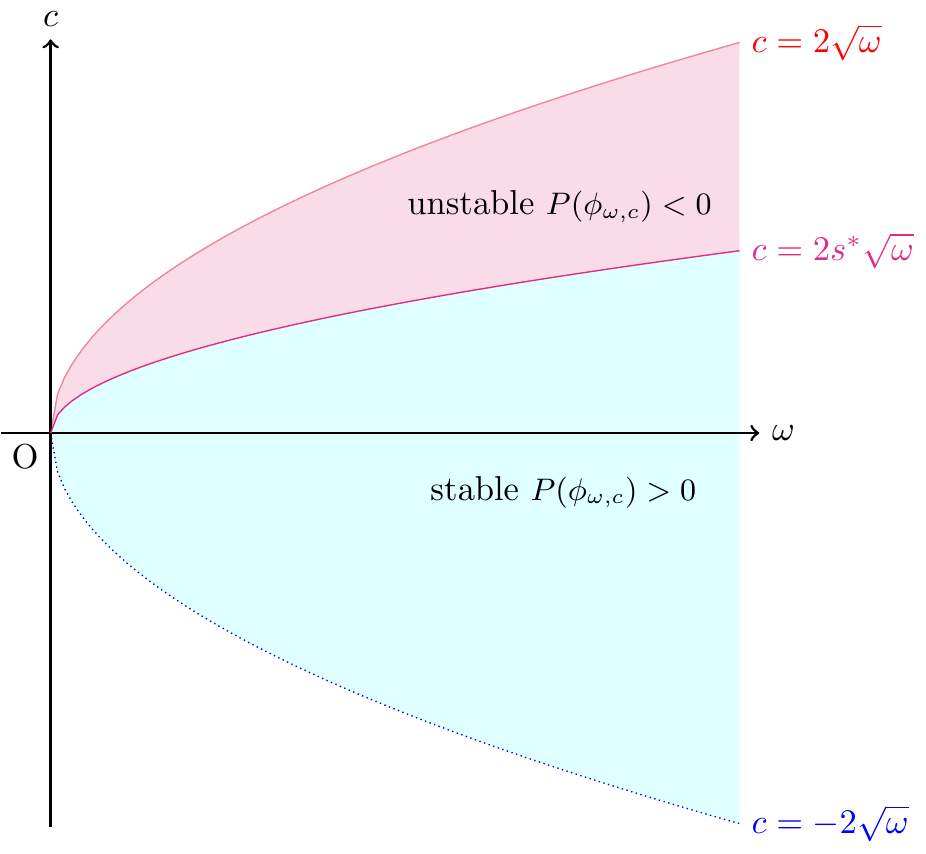}
\caption{The stable/unstable region of solitons in the case $b>0$.}
\label{fig:1}
\end{figure}

The solitons of \eqref{eq:1.1} in the case $b<0$ seem to have been little studied. Since the nonlinear term with derivative has a focusing effect, the equation \eqref{eq:1.1} still has a two-parameter family of the solitons even if the quintic term is defocusing. More precisely, we have the following result.
\begin{proposition}	
\label{prop:1.1}
Let $b<0$. The equation \eqref{eq:1.1} has a two-parameter family of solitons $u_{\omega ,c}(x,t)=e^{i\omega t}\phi_{\omega ,c}(x-ct)$ if and only if $(\omega ,c)$ satisfies
\begin{align} 
\label{eq:1.14}
\begin{array}{ll}
\ds\text{if}~b>-3/16,& \ds -2\sqrt{\omega} <c\leq 2\sqrt{\omega} ,\\[3pt]
\ds\text{if}~b\leq -3/16,& \ds -2\sqrt{\omega} <c<-2s_{\ast}\sqrt{\omega},
\end{array}
\end{align}
where $s_{\ast} :=\sqrt{ -\gamma /(1-\gamma)}$ and $\gamma :=1+\frac{16}{3}b$. 
\end{proposition}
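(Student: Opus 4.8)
The plan is to reduce the existence of a soliton to the existence of a positive homoclinic orbit of an autonomous second-order ODE, and then to read off the admissible $(\omega,c)$ from the root structure of an associated quadratic. First I would substitute the ansatz $u_{\omega ,c}(t,x)=e^{i\omega t}\phi(x-ct)$ into \eqref{eq:1.1}; after dividing by $e^{i\omega t}$, the profile $\phi=\phi_{\omega ,c}$ must solve
\begin{align}
\label{eq:prop-profile}
-\phi''+\omega\phi+ic\phi'-i|\phi|^2\phi'-b|\phi|^4\phi=0,\quad x\in\R,
\end{align}
together with the decay condition $\phi(x)\to 0$ as $|x|\to\infty$. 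Writing $\phi=\Phi e^{i\Theta}$ with $\Phi=|\phi|\ge 0$ and $\Theta$ real, the imaginary part of \eqref{eq:prop-profile} becomes a total derivative, $\big(\Theta'\Phi^2-\tfrac{c}{2}\Phi^2+\tfrac14\Phi^4\big)'=0$; the decay condition forces the integration constant to vanish, so on $\{\Phi>0\}$ one has $\Theta'=\tfrac{c}{2}-\tfrac14\Phi^2$. This is exactly the gauge relation underlying \eqref{eq:1.4}, and it lets me eliminate the phase completely.

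Substituting this expression for $\Theta'$ into the real part of \eqref{eq:prop-profile}, the phase drops out and $\Phi$ satisfies the single-unknown equation
\begin{align}
\label{eq:prop-real}
-\Phi''+\Big(\omega-\tfrac{c^2}{4}\Big)\Phi+\tfrac{c}{2}\Phi^3-\tfrac{3\gamma}{16}\Phi^5=0,
\end{align}
where I have used $\tfrac{3}{16}+b=\tfrac{3\gamma}{16}$ with $\gamma=1+\tfrac{16}{3}b$. Since \eqref{eq:prop-real} is autonomous, multiplying by $\Phi'$ and integrating (again using $\Phi,\Phi'\to 0$ to kill the constant) yields the first integral
\begin{align}
\label{eq:prop-firstint}
(\Phi')^2=\Phi^2\,g(\Phi^2),\qquad g(P):=\Big(\omega-\tfrac{c^2}{4}\Big)+\tfrac{c}{4}P-\tfrac{\gamma}{16}P^2.
\end{align}
A nontrivial soliton corresponds exactly to a positive even solution of \eqref{eq:prop-real}, that is, to a homoclinic orbit of \eqref{eq:prop-firstint} based at the origin. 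Such an orbit exists precisely when $g(P)>0$ on an interval $(0,P_\ast)$ with a simple zero $g(P_\ast)=0$: simplicity guarantees a genuine turning point $\Phi(0)=\sqrt{P_\ast}$ and decay back to $0$, whereas a double zero would produce a solution asymptotic to the nonzero constant $\sqrt{P_\ast}$ rather than a localized profile.

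It remains to translate this root condition into constraints on $(\omega,c)$, which I would organize by the sign of $\gamma$ (equivalently of $b+\tfrac{3}{16}$). When $\gamma>0$ the quadratic $g$ opens downward with $g(0)=\omega-\tfrac{c^2}{4}$: for $\omega>c^2/4$ it has exactly one positive root (the product of the two roots is negative), giving a bright soliton for every $-2\sqrt{\omega}<c<2\sqrt{\omega}$; the degenerate base point $g(0)=0$ at $c=2\sqrt{\omega}$ (where $c>0$) still leaves $g>0$ on a maximal interval and yields the algebraically decaying soliton, while $c=-2\sqrt{\omega}$ fails since $g<0$ near $0$. This gives $-2\sqrt{\omega}<c\le 2\sqrt{\omega}$. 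When $\gamma\le 0$ the quadratic opens upward (or degenerates to a line at $\gamma=0$, a focusing cubic profile): positivity of $g$ near $0$ forces $\omega>c^2/4$, a simple positive root forces $c<0$, and the existence of two distinct positive roots requires a strictly positive discriminant, which after simplification reads $c^2(1-\gamma)+4\gamma\omega>0$, i.e.\ $c^2>4\omega s_\ast^2$ with $s_\ast^2=-\gamma/(1-\gamma)$. Combined with $c<0$ and $c>-2\sqrt{\omega}$ this produces $-2\sqrt{\omega}<c<-2s_\ast\sqrt{\omega}$.

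The main obstacle — and the only genuinely delicate point — is the borderline analysis of the zeros of $g$. One must argue carefully that a double positive root (discriminant $=0$, i.e.\ $c=-2s_\ast\sqrt{\omega}$ in the case $\gamma<0$) does \emph{not} give a soliton, because the orbit reaches the turning value only as $|x|\to\infty$, and that the base-point degeneracy $g(0)=0$ at $c=2\sqrt{\omega}$ (when $\gamma>0$) \emph{does} give the algebraic soliton with the $(c|x|)^{-1}$ decay recorded earlier. These threshold distinctions are exactly what force the inequality to be strict at $-2s_\ast\sqrt{\omega}$ yet non-strict at $2\sqrt{\omega}$, and pinning them down rigorously — rather than the routine case bookkeeping — is where the real care is needed.
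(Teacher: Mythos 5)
Your argument is correct, and its backbone coincides with the paper's: the polar decomposition $\phi=\Phi e^{i\Theta}$ with $\Theta'=\frac{c}{2}-\frac14\Phi^2$ is exactly the gauge transformation \eqref{eq:2.4}, and your reduced equation for $\Phi$ is the paper's double-power elliptic equation \eqref{eq:2.6} (the paper justifies the vanishing of $\im(\overline{\Phi}\Phi')$ by citing \cite[Lemma 2]{CO06}, which is the same integration-constant argument you give). Where you genuinely diverge is in the last step. The paper writes down the explicit solutions \eqref{eq:2.7}--\eqref{eq:2.8} and invokes Berestycki--Lions \cite{BeL83} for the necessary and sufficient condition \eqref{eq:2.9}, whereas you carry out the one-dimensional quadrature analysis by hand: your criterion ``$g>0$ on $(0,P_\ast)$ with a simple zero at $P_\ast$'' is precisely the 1D Berestycki--Lions condition specialized to this nonlinearity, and your discriminant computation $c^2(1-\gamma)+4\gamma\omega>0$ reproduces \eqref{eq:2.9} (it is $16$ times the discriminant of $g$, and positivity of both roots follows from the sign of their sum $4c/\gamma$ and product). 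Your route is more self-contained and makes transparent why the endpoint behaviour differs at the two ends of the range --- the double root at $c=-2s_{\ast}\sqrt{\omega}$ gives a front asymptotic to the nonzero constant rather than a homoclinic orbit, while the degenerate base point at $c=2\sqrt{\omega}$, $\gamma>0$ still yields a localized (algebraically decaying) profile --- facts the paper absorbs into the citation and the explicit formulae. The trade-off is that the paper's closed-form expressions are not a luxury there: they are reused throughout Sections \ref{sec:2.2}--\ref{sec:2.3} to compute the mass and momentum of the solitons, which a purely qualitative phase-plane argument does not provide. The only points to tighten are cosmetic: the conserved quantity comes from the imaginary part multiplied by $\Phi$ rather than the imaginary part itself, and in the case $\gamma>0$ you should state explicitly (as you do for $\gamma\leq 0$) that $g(0)<0$ rules out $|c|>2\sqrt{\omega}$, since near the tails $(\Phi')^2=\Phi^2 g(\Phi^2)$ would be negative.
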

We note that the value $b=-3/16$ gives the turning point in the structure of the solitons.
In particular algebraic solitons exist only for the case $b>-3/16$.
In the case $b\leq -3/16$ the solitons still exist, but their velocity must be negative. We note that $0\leq s_{\ast} <1$ and $s_{\ast}\uparrow 1$ as $b\downarrow -\infty$. This means that as the defocusing effect is stronger, the existence region of solitons is narrower; see Figure \ref{fig:2}. 
\begin{figure}[t]
\begin{minipage}{0.49\linewidth} 
 \includegraphics[width=\linewidth]{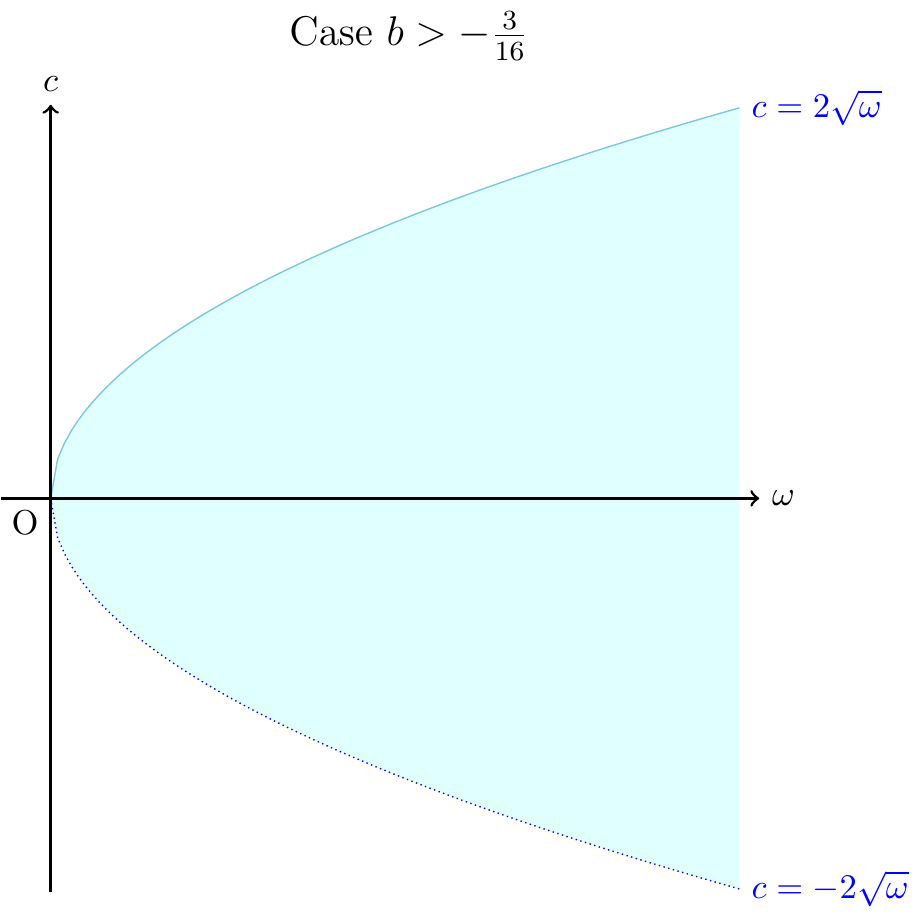}
\end{minipage}
\begin{minipage}{0.49\linewidth}
 \includegraphics[width=\linewidth]{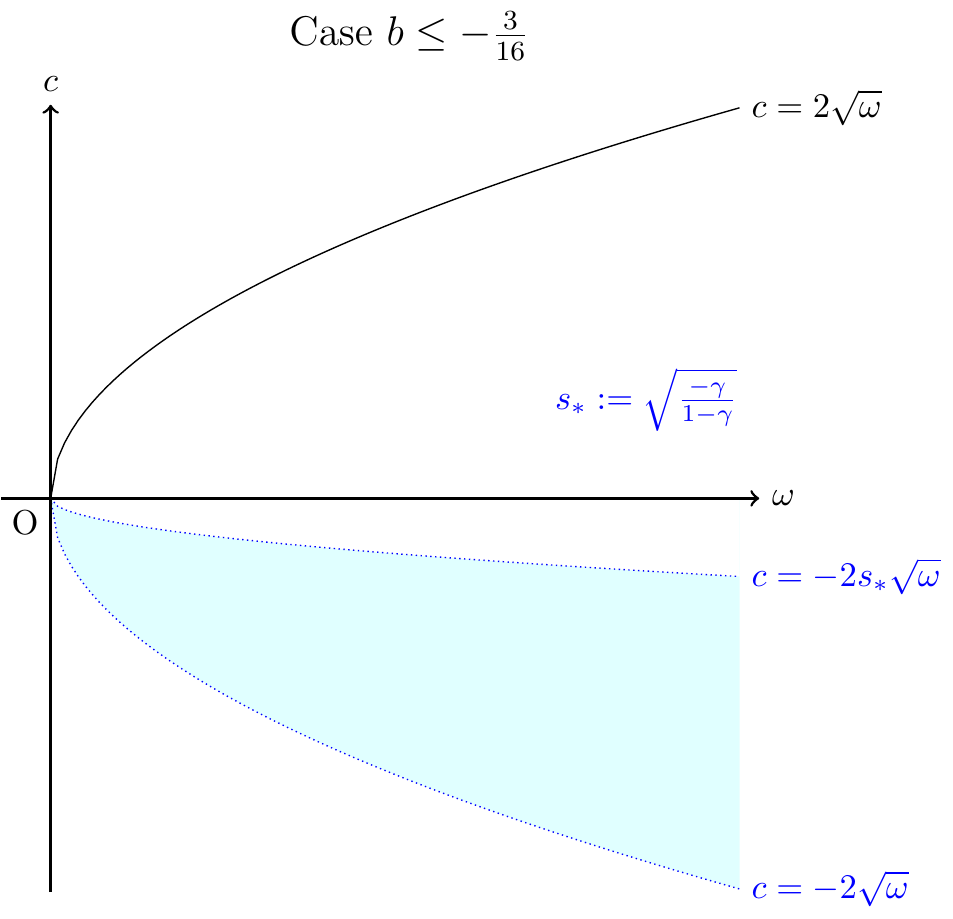}
\end{minipage}
\caption{Existence region of solitons.}
\label{fig:2}
\end{figure}

Similarly as \eqref{DNLS}, the curve \eqref{eq:1.12} gives the scaling of the solitons for the equation \eqref{eq:1.1}. For the variety of the mass we have the following result.
\begin{proposition}	
\label{prop:1.2}
Let $b\in \R$. If $b>-3/16$, the function
\begin{align*}
(-1, 1]\ni s\mapsto M(\phi_{1 ,2s} ) \in \l( 0, \frac{4\pi}{\sqrt{\gamma} } \r]
\end{align*}
is strictly increasing and surjective. Similarly, if $b\leq -3/16$, the function
\begin{align*}
(-1, -s_*)\ni s\mapsto M(\phi_{1 ,2s} ) \in (0, \infty )
\end{align*}
has the same property.
\end{proposition}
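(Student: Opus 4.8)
The plan is to reduce the computation of $M(\phi_{1,2s})$ to an explicit one-dimensional integral and then analyze the resulting function of $s$ by elementary means. First I would pass to the modulus--phase representation $\phi_{\omega ,c}=\Phi\,e^{i\Theta}$ with $\Phi=|\phi_{\omega ,c}|\geq 0$. Substituting into the profile equation $-\phi''+\omega\phi+ic\phi'-i|\phi|^2\phi'-b|\phi|^4\phi=0$ and separating real and imaginary parts, the imaginary part integrates (using decay at infinity, so the constant of integration vanishes) to the gauge relation $\Theta'=\tfrac{c}{2}-\tfrac14\Phi^2$. Feeding this back into the real part eliminates the phase and yields the single real profile equation
\[
-\Phi''+\l(\omega-\tfrac{c^2}{4}\r)\Phi+\tfrac{c}{2}\Phi^3-\tfrac{3\gamma}{16}\Phi^5=0,
\]
where I have used $\tfrac{3}{16}+b=\tfrac{3}{16}\gamma$. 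This already exhibits $\gamma$ (equivalently the sign of $b+3/16$) as the coefficient governing the quintic term, which is the source of the dichotomy in the statement.

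Next, multiplying by $\Phi'$ and integrating once (again with vanishing boundary terms) gives the first integral $(\Phi')^2=\Phi^2P(\Phi^2)$ with $P(v)=\l(\omega-\tfrac{c^2}{4}\r)+\tfrac{c}{4}v-\tfrac{\gamma}{16}v^2$. The soliton is the homoclinic orbit whose squared amplitude runs over $(0,v_0)$, where $v_0$ is the smallest positive zero of $P$. Writing $M=\|\Phi\|_{L^2}^2$ and changing variables from $x$ to $v=\Phi^2$ via $dx=d\Phi/(\Phi\sqrt{P(\Phi^2)})$ collapses the mass to
\[
M(\phi_{\omega ,c})=\int_0^{v_0}\frac{dv}{\sqrt{P(v)}}.
\]
By the scaling relation \eqref{eq:1.12} it suffices to take $\omega=1$, $c=2s$, so that $P(v)=(1-s^2)+\tfrac{s}{2}v-\tfrac{\gamma}{16}v^2$.

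The integral is elementary, but its nature depends on the sign of its leading coefficient, i.e. on the sign of $\gamma$, which is exactly the case split in the proposition. When $\gamma>0$ (i.e. $b>-3/16$), $P$ is a downward parabola with $P(0)=1-s^2\geq 0$, so it has a single positive root $v_0$; the integral evaluates to an $\arcsin$, yielding
\[
M(\phi_{1,2s})=\frac{4}{\sqrt{\gamma}}\l(\frac{\pi}{2}+\arcsin\frac{s}{\sqrt{(1-\gamma)s^2+\gamma}}\r).
\]
When $\gamma<0$ (i.e. $b<-3/16$), $P$ is an upward parabola, its two turning points are two positive roots, and the soliton amplitude is the \emph{smaller} of them; the condition that these roots be real and distinct is precisely $s<-s_\ast$, which recovers the domain of Proposition~\ref{prop:1.1}. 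Here the integral is an $\mathrm{arccosh}$, giving
\[
M(\phi_{1,2s})=\frac{4}{\sqrt{-\gamma}}\,\mathrm{arccosh}\frac{-s}{\sqrt{(1-\gamma)s^2+\gamma}}.
\]
(The borderline $\gamma=0$, i.e. $b=-3/16$, has $P$ linear and is handled directly or as a limit, giving $M=4\sqrt{1-s^2}/|s|$ on $(-1,0)$, consistent with $s_\ast=0$.)

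Finally I would read off monotonicity and surjectivity from these closed forms. In every case the argument of the inverse function is $F(s)=\pm s/\sqrt{h(s)}$ with $h(s)=(1-\gamma)s^2+\gamma$, and one checks $h>0$ on the relevant interval. The key simplification is the identity $h-\tfrac{s}{2}h'\equiv\gamma$, from which $F'(s)=\pm\gamma\, h(s)^{-3/2}$ has constant sign; hence $F$, and therefore $M$, is strictly increasing. Evaluating the endpoints then identifies the ranges: for $\gamma>0$ one gets $M\to 0$ as $s\downarrow-1$ and $M=4\pi/\sqrt{\gamma}$ at $s=1$, giving $(0,4\pi/\sqrt{\gamma}]$; for $\gamma\leq 0$ one gets $M\to 0$ as $s\downarrow-1$ and $M\to+\infty$ as $s\uparrow-s_\ast$ (where $h\to 0$), giving $(0,\infty)$. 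The main obstacle is not the monotonicity, which collapses thanks to the identity $h-\tfrac{s}{2}h'=\gamma$, but the careful bookkeeping in the defocusing regime $\gamma<0$: one must verify that the physical soliton amplitude is the smaller positive root of $P$ and that reality of the two roots is equivalent to $s<-s_\ast$, so that the explicit mass formula is valid exactly on the existence interval furnished by Proposition~\ref{prop:1.1}.
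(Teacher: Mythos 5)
Your proposal is correct, and it reaches the same closed-form expressions for the mass as the paper, but by a genuinely different computational route. The paper (Proposition \ref{prop:2.2}) starts from the explicit $\operatorname{sech}$-type profile \eqref{eq:2.7}--\eqref{eq:2.8} and evaluates $M(\Phi_{\omega,c})$ by the table integral $\int_{-\infty}^{\infty}\frac{dy}{\cosh y+\alpha}$ of Lemma \ref{lem:2.1}, obtaining $\frac{8}{\sqrt{\gamma}}\tan^{-1}\sqrt{\frac{1+\beta}{1-\beta}}$ for $\gamma>0$ and $\frac{4}{\sqrt{-\gamma}}\log(\alpha+\sqrt{\alpha^2-1})$ for $\gamma<0$; your identities $\frac{4}{\sqrt{\gamma}}\bigl(\frac{\pi}{2}+\arcsin\beta\bigr)$ and $\frac{4}{\sqrt{-\gamma}}\operatorname{arccosh}\alpha$ are the same functions under the standard half-angle and logarithmic identities, with the same inner arguments $\beta(s)=s/\sqrt{(1-\gamma)s^2+\gamma}$ and $\alpha(s)=-s/\sqrt{(1-\gamma)s^2+\gamma}$. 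You instead bypass the explicit profile entirely: the first integral $(\Phi')^2=\Phi^2P(\Phi^2)$ reduces the mass to the quadrature $\int_0^{v_0}dv/\sqrt{P(v)}$, which is more self-contained (no reliance on \eqref{eq:2.7}--\eqref{eq:2.8} or Gradshteyn--Ryzhik) and has the added benefit that the sign analysis of the parabola $P$ re-derives the existence range of Proposition \ref{prop:1.1} along the way. The monotonicity step is essentially the one the paper performs: the paper rewrites $\beta(s)=\operatorname{sgn}(s)/\sqrt{1+\gamma(s^{-2}-1)}$ and argues monotonicity case by case in the sign of $s$, whereas your identity $h-\tfrac{s}{2}h'\equiv\gamma$ gives $F'=\pm\gamma h^{-3/2}$ in one stroke, which is cleaner and handles $s=0$ uniformly. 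Two small points of bookkeeping that you handle correctly but should keep explicit in a write-up: the change of variables $\Phi\,d\Phi=\tfrac12\,dv$ on each monotone half-line combines with the evenness of $\Phi$ to give exactly $\int_0^{v_0}dv/\sqrt{P(v)}$ with no stray factor (your endpoint value $4\pi/\sqrt{\gamma}$ at $s=1$ confirms the normalization); and for $\gamma>1$ the positivity of $h(s)=s^2+\gamma(1-s^2)$ on $(-1,1]$ needs the one-line remark that both terms are nonnegative and cannot vanish simultaneously.
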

To examine the effect of the momentum is important in our analysis. We recall that the momentum of the solitons in the case $b\geq 0$ have the following property:
\begin{align*}
\begin{array}{ll}
\ds\text{if}~b=0,& P(\phi_{1,2s}) >0~\text{for}~s\in (-1,1)~\text{and}~P(\phi_{1,2})=0,\\[3pt]
\ds\text{if}~b>0,& P(\phi_{1,2s})>0~\text{for}~s\in (-1, s^*),~P(\phi_{1,2s^*})=0\\[3pt]
&\text{and}~P(\phi_{1,2s})<0~\text{for}~s\in(s^{*},1].
\end{array}
\end{align*}
One can prove that $s^*(b)\to 1$ as $b\downarrow 0$ (see Remark \ref{rem:2.8}). In this sense we set $s^*(0):=1$. We note that the value $s^*$ is characterized by
\begin{align}
\label{eq:1.15}
P(\phi_{1,2s^*(b)}) =0 \quad \text{for all}~b\geq 0.
\end{align}
For the momentum of the solitons in the case $b<0$, we have the following result.
\begin{proposition}
\label{prop:1.3}
Let $b<0$. The momentum of all solitons for the equation \eqref{eq:1.1} is positive.
\end{proposition}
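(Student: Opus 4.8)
The plan is to reduce the statement to the elementary identity
\[
P(\phi_{\omega,c})=-\frac{c}{2}\,M(\phi_{\omega,c})+\frac14\|\phi_{\omega,c}\|_{L^4}^4
\]
and then to split the argument according to the sign of the traveling speed $c$. To obtain this formula I would write the profile in polar form $\phi_{\omega,c}=\Phi e^{i\theta}$ with $\Phi=|\phi_{\omega,c}|\ge 0$ and substitute into the profile equation $-\phi''+\omega\phi+ic\phi'-i|\phi|^2\phi'-b|\phi|^4\phi=0$. Its imaginary part is an exact derivative, and integrating once (the integration constant vanishes because $\Phi,\Phi'\to0$ at infinity) yields the $b$-independent phase relation $\theta'=\frac c2-\frac14\Phi^2$. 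Since $P(\phi_{\omega,c})=\re\int_\R i\phi_{\omega,c}'\,\overline{\phi_{\omega,c}}\,dx=-\int_\R\Phi^2\theta'\,dx$, inserting $\theta'$ gives the displayed formula.

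If $c\le 0$ the conclusion is immediate: then $-\frac c2 M(\phi_{\omega,c})\ge 0$ while $\frac14\|\phi_{\omega,c}\|_{L^4}^4>0$, so $P(\phi_{\omega,c})>0$. By Proposition~\ref{prop:1.1} this already disposes of \emph{every} soliton when $b\le-3/16$ (there the admissible range forces $c<0$), together with all solitons having $c\le 0$ in the range $-3/16<b<0$. It remains to treat $c>0$, which by Proposition~\ref{prop:1.1} can occur only when $-3/16<b<0$, that is when $0<\gamma<1$.

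For $c>0$ I would extract a second identity from the real part of the profile equation. Substituting $\theta'=\frac c2-\frac14\Phi^2$ and simplifying (the coefficient of $\Phi^5$ collapses to $-\frac{3}{16}\gamma$ since $\frac{3}{16}\gamma=\frac{3}{16}+b$) gives $-\Phi''+(\omega-\frac{c^2}{4})\Phi+\frac c2\Phi^3-\frac{3}{16}\gamma\Phi^5=0$, whose first integral is $(\Phi')^2=\Phi^2 W(\Phi^2)$ with $W(R)=(\omega-\frac{c^2}{4})+\frac c4 R-\frac{\gamma}{16}R^2$. As $\gamma>0$, $W$ is a downward parabola with $W(0)=\omega-\frac{c^2}{4}\ge 0$, so it has a positive root $R_0$ (the maximum of $\Phi^2$) with $W>0$ on $(0,R_0)$. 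Changing variables $R=\Phi^2$ I would write $M(\phi_{\omega,c})=\int_0^{R_0}\frac{dR}{\sqrt{W(R)}}$ and $\|\phi_{\omega,c}\|_{L^4}^4=\int_0^{R_0}\frac{R\,dR}{\sqrt{W(R)}}$; integrating $\frac{d}{dR}\sqrt{W}=\frac{W'(R)}{2\sqrt{W}}$ over $(0,R_0)$ and using $W(R_0)=0$ then produces the linear relation $\|\phi_{\omega,c}\|_{L^4}^4=\frac{2c}{\gamma}M(\phi_{\omega,c})+\frac{16}{\gamma}\sqrt{\omega-\frac{c^2}{4}}$. Inserting this into the basic formula collapses it to
\[
P(\phi_{\omega,c})=\frac{1-\gamma}{\gamma}\cdot\frac c2\,M(\phi_{\omega,c})+\frac{4}{\gamma}\sqrt{\omega-\frac{c^2}{4}},
\]
in which both terms are nonnegative, the first strictly positive, because $0<\gamma<1$, $c>0$ and $M(\phi_{\omega,c})>0$. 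Hence $P(\phi_{\omega,c})>0$.

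The main obstacle is exactly this $c>0$ regime, and within it the bookkeeping that turns the profile ODE into the clean relation between $\|\phi_{\omega,c}\|_{L^4}^4$ and $M(\phi_{\omega,c})$. Two points need care. First, one must justify the substitution $R=\Phi^2$ and the convergence of the resulting integrals: the singularity at $R=R_0$ is the harmless $(R_0-R)^{-1/2}$ coming from $W'(R_0)<0$, and the only delicate endpoint is $R=0$ for the algebraic soliton $c=2\sqrt{\omega}$, where $W(0)=0$ and $\sqrt{W}\sim\sqrt{cR/4}$ still keeps the integrals finite (consistently with the finite mass $4\pi/\sqrt{\gamma}$ from Proposition~\ref{prop:1.2}). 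Second, one must confirm that $R_0$ is the relevant turning point and that $\Phi$ is a single positive bump, which follows from the profile analysis underlying Proposition~\ref{prop:1.1}. Once these are in place the identity is purely algebraic, so the essential difficulty is entirely concentrated in the case $c>0$, the case $c\le 0$ being immediate from the basic momentum formula.
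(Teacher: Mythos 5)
Your proof is correct, and it reaches the same two pivotal identities as the paper: the basic formula $P(\phi_{\omega,c})=-\tfrac{c}{2}M(\phi_{\omega,c})+\tfrac14\|\Phi_{\omega,c}\|_{L^4}^4$ (the paper's \eqref{eq:2.29}), which settles $c\le 0$ at once, and the relation $P(\phi_{\omega,c})=\tfrac{c}{2}\bigl(-1+\tfrac1\gamma\bigr)M(\phi_{\omega,c})+\tfrac{2}{\gamma}\sqrt{4\omega-c^2}$ (the paper's \eqref{eq:2.31}, equivalently \eqref{eq:2.40}), whose two terms are visibly positive when $c>0$ and $0<\gamma<1$. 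Where you differ is in how \eqref{eq:2.31} is obtained. The paper computes $M(\phi_{\omega,c})$ and $\|\Phi_{\omega,c}\|_{L^4}^4$ \emph{separately and explicitly} from the closed-form profile \eqref{eq:2.7}--\eqref{eq:2.8} and the integral formulae for $\int(\cosh y+\alpha)^{-1}dy$ and $\int(\cosh y+\alpha)^{-2}dy$ (Lemmas \ref{lem:2.1} and \ref{lem:2.3}), then combines them in \eqref{eq:2.34}. You instead extract only the \emph{linear relation} $\|\Phi_{\omega,c}\|_{L^4}^4=\tfrac{2c}{\gamma}M(\phi_{\omega,c})+\tfrac{8}{\gamma}\sqrt{4\omega-c^2}$ directly from the first integral $(\Phi')^2=\Phi^2W(\Phi^2)$ of the reduced equation \eqref{eq:2.6}, by integrating $\tfrac{d}{dR}\sqrt{W}$ over $(0,R_0)$ after the substitution $R=\Phi^2$; I checked this reproduces \eqref{eq:2.34} exactly, including the algebraic case $c=2\sqrt{\omega}$ where $W(0)=0$. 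Your route is more elementary and self-contained for this one proposition---no explicit profile, no special integrals---and it makes transparent why the same momentum formula holds across all parameter regimes (the paper's Remark \ref{rem:2.5}). What it does not give you is the individual values of $M(\phi_{1,2s})$, which the paper needs anyway for Propositions \ref{prop:1.2} and \ref{prop:2.2}, so in the context of the paper the explicit computation carries no extra cost. The only points requiring care in your argument---positivity of $\Phi$ so that the polar decomposition and the turning-point analysis are legitimate, and integrability at the endpoints $R=0$ and $R=R_0$---are exactly the ones you flag, and they are guaranteed by the explicit positive even profiles \eqref{eq:2.7}--\eqref{eq:2.8}.
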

In Section \ref{sec:2} we study the solitons of \eqref{eq:1.1} in more detail, and give a proof for these propositions.


\subsection{Main results}
First we establish the mass condition for the equation (\ref{eq:1.1}). The local well-posedness in the energy space $H^1(\R)$ was obtained in \cite{HO94a, Oz96}. In \cite{Oz96} it was proved that
\eqref{eq:1.1} was globally well-posed in $H^1(\R)$ under the mass condition
\begin{align}
\label{eq:1.16}
\begin{array}{ll}
\ds M(u_0) < \frac{2\pi}{\sqrt{\gamma}} &\text{if}~ b>0, \\[9pt]
\ds M(u_0) <2\pi &\text{if}~ b\leq 0,
\end{array}
\end{align}
where we recall that $\gamma =1+\frac{16}{3}b$. 
This result is considered as a natural extension of $2\pi$-mass condition for \eqref{DNLS}.\footnote{Actually more general equation including \eqref{eq:1.1} is studied in \cite{Oz96}. }
From the following energy form
\begin{align}
\label{eq:1.17}
E\l( \cG_{-1/4}(u) \r) =\frac{1}{2}\| \del_x u \|_{L^2}^2 -\frac{\gamma}{32}\| u\|_{L^6}^6,
\end{align}
where 
\begin{align*}
\cG_{a}(u)(t,x):=\exp\l( ia\int_{-\infty}^{x}|u(t,y)|^2 dy\r)u(t,x) \quad\text{for}~a\in\R, 
\end{align*}
the mass condition seems to be necessary when $b>-3/16$. By using the sharp Gagliardo--Nirenberg inequality \eqref{GN1} and the conservation laws of mass and energy, $\frac{2\pi}{\sqrt{\gamma}}$-mass condition is obtained when $b>-3/16$. We note that the value $\frac{2\pi}{\sqrt{\gamma}}$ corresponds to the mass of the standing waves of \eqref{eq:1.1}, i.e., $\frac{2\pi}{\sqrt{\gamma}} =M(\phi_{\omega,0})$. 

Our first result gives the improvement of the mass condition in previous works.
 \begin{theorem}
\label{thm:1.4}
Let $u_0\in H^1(\R)$ satisfy each of the following two cases\textup{:}
\begin{enumerate}[\rm (i)]
\setlength{\itemsep}{3pt}
\item If $b>0$, $M(u_0) < M(\phi_{1,2s^*})$, or  $M(u_0) =M(\phi_{1,2s^*})$ and $P(u_0)<0$.
\item If $-3/16<b\leq 0$, $M(u_0) < \frac{4\pi}{\gamma^{3/2}}$, or  $M(u_0) =\frac{4\pi}{\gamma^{3/2}}$ and $P(u_0)<0$.
\end{enumerate}
Then the $H^1(\R)$-solution $u$ of \eqref{eq:1.1} with $u(0)=u_0$ exists globally both forward and backward in time. Moreover we have
\begin{align*}
\sup_{t\in\R} \| u(t)\|_{H^1}\leq C(\| u_0\|_{H^1})<\infty .
\end{align*}
\end{theorem}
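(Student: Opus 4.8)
The plan is to reduce everything to a uniform a priori bound on $\| \del_x u(t)\|_{L^2}$, after which global existence and the stated bound follow from the $H^1$ local theory of \cite{HO94a, Oz96} together with the blow-up alternative. The natural starting point is the gauge identity \eqref{eq:1.17}: after an appropriate gauge transformation producing a function $v$ one has $E(u)=\frac12\|\del_x v\|_{L^2}^2-\frac{\gamma}{32}\|v\|_{L^6}^6$ with $M(v)=M(u)$, so the conserved energy takes the form of a clean mass-critical NLS energy in $v$. Inserting the sharp Gagliardo--Nirenberg inequality \eqref{GN1} into this identity at once gives coercivity of $\|\del_x v\|_{L^2}$ whenever $M(u)<2\pi/\sqrt{\gamma}$; but this only reproduces the standing-wave threshold of \cite{Oz96}. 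The essential task is therefore to exploit the conserved momentum $P(u)$ to raise the admissible mass all the way to the soliton values in the statement.

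To this end I would work not with $E$ alone but with the conserved family $E(u)+\tfrac{c}{2}P(u)$, $c\in\R$. Tracking how the momentum transforms under the gauge one finds a relation of the form $P(u)=P(v)+\tfrac14\|v\|_{L^4}^4$, and applying the Galilean boost $\til v=e^{icx/2}v$ converts $E(u)+\tfrac{c}{2}P(u)$, up to the additive constant $\tfrac{c^2}{8}M(u)$, into the action-type functional $\tfrac12\|\del_x\til v\|_{L^2}^2-\tfrac{\gamma}{32}\|\til v\|_{L^6}^6+\tfrac{c}{8}\|\til v\|_{L^4}^4$. Constrained to fixed mass, this is precisely the functional whose critical points are the boosted solitons, so the relevant quantity is the sharp constant in the inequality bounding $\tfrac{\gamma}{32}\|\til v\|_{L^6}^6-\tfrac{c}{8}\|\til v\|_{L^4}^4$ by $\tfrac12\|\del_x\til v\|_{L^2}^2$. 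Here the soliton analysis of Section~\ref{sec:2} carries the weight: the scaling \eqref{eq:1.12}, the monotonicity and surjectivity of $s\mapsto M(\phi_{1,2s})$ in Proposition~\ref{prop:1.2}, and the momentum signs of Proposition~\ref{prop:1.3} identify the extremizer as the soliton of velocity $c$, show that coercivity holds exactly below $M(\phi_{\cdot,c})$, and — upon optimizing over $c$, the optimal boost being dictated by $P(u_0)$ — push the effective threshold up to $M(\phi_{1,2s^*})$ when $b>0$ and to the algebraic-soliton mass $\tfrac{4\pi}{\gamma^{3/2}}$ when $-3/16<b\le 0$.

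For the strict inequality $M(u_0)<M^*$ this yields the uniform bound immediately, since the coercivity constant stays bounded away from $0$. The main obstacle is the boundary case $M(u_0)=M^*$, where the optimal soliton saturates the inequality and the coercivity constant degenerates; here I would argue by contradiction. If $\|\del_x u(t_n)\|_{L^2}\to\infty$ along some sequence, a concentration--compactness/profile decomposition of the gauge-transformed and rescaled solution would extract in the limit the extremizing soliton, which at the threshold is exactly the zero-momentum soliton $\phi_{1,2s^*}$ with $P(\phi_{1,2s^*})=0$ by \eqref{eq:1.15}. Reconciling this vanishing limiting momentum with the conserved, strictly negative value $P(u_0)<0$ produces the contradiction, so no such sequence exists and the bound persists. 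This is precisely where the sign hypothesis on the momentum is indispensable, and passing the momentum (not merely the mass and energy) correctly through the concentration limit is the delicate technical step of the argument.
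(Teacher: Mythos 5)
Your treatment of the strict case $M(u_0)<M^*$ is, up to phrasing, the paper's own route: the conserved combination $E+\frac{\omega}{2}M+\frac{c}{2}P$ becomes, after the gauge $\cG_{1/4}$, the action $\cS_{\omega,c}$; the solitons are its minimizers on the Nehari manifold, with $2d(1,2s)=M(\phi_{1,2s})+sP(\phi_{1,2s})$ and $\max_{s}2d(1,2s)=M^*$; and sending $\omega\to\infty$ along the ray $c=2s\sqrt{\omega}$ absorbs the energy and momentum contributions and places the data in an invariant, coercive well $\scA^{+}_{\omega,2s\sqrt{\omega}}$. One caveat: for a \emph{fixed} $(\omega,c)$ the mass condition alone gives nothing --- you need both $\cS_{\omega,c}(v_0)<d(\omega,c)$ and $\cK_{\omega,c}(v_0)\geq 0$, and it is precisely the limit $\omega\to\infty$ along the scaling curve that produces these two inequalities simultaneously; your ``optimize over $c$'' should be made explicit in this form.

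The genuine gap is the endpoint $M(u_0)=M^*$, $P(u_0)<0$. Your contradiction argument cannot close, because the momentum is not invariant under the mass-critical rescaling: with $u_{\lambda}(x)=\lambda^{1/2}u(\lambda x)$ one has $P(u_{\lambda})=\lambda P(u)$, so along a putative blow-up sequence the rescaled momentum equals $\lambda(t_n)P(u_0)\to 0$ regardless of the sign of $P(u_0)$, and this is perfectly consistent with the limit profile having zero momentum. Indeed, the proof of Theorem \ref{thm:1.12} extracts the algebraic soliton at mass $4\pi$ with \emph{no} hypothesis on the sign of $P(u_0)$: the sign leaves no trace in the limit, so no contradiction appears. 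Moreover, for $-3/16<b<0$ the threshold is $M^*=4\pi/\gamma^{3/2}=M(\phi_{1,2})+P(\phi_{1,2})$, which exceeds the mass of \emph{every} soliton (Proposition \ref{prop:1.2}, and Theorem \ref{thm:1.7} (vi-b) shows $\scB_0=\emptyset$ there), so there is no extremizing soliton of mass $M^*$ for a profile decomposition to converge to. The paper's endpoint mechanism is different and is exactly where $P(u_0)<0$ enters: when $\cM(\varphi)=M^*=2d(1,2s^*)$ the mass terms cancel and $\cS_{\mu^2,2s^*\mu}(\varphi)-d(\mu^2,2s^*\mu)=\cE(\varphi)+s^*\mu\,\cP(\varphi)$, in which the momentum is \emph{amplified} by the factor $\mu=\sqrt{\omega}$ rather than scaled away; its strict negativity therefore dominates for large $\mu$, and together with $\cK_{\mu^2,2s^*\mu}(\varphi)>0$ for large $\mu$ this places the data strictly inside the invariant well $\scA^{+}_{s^*}$ (with $s^*$ replaced by $1$ when $-3/16<b\leq 0$). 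You need this amplification of the momentum at the level of the action, not a compactness argument on a blow-up sequence.
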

\begin{remark}
\label{rem:1.5}
When $b\leq -3/16$, the equation \eqref{eq:1.1} is globally well-posed for any initial data $u_0\in H^1(\R)$. In particular the global result in the case $b=-3/16$ is compatible with Theorem \ref{thm:1.4}, since $\frac{4\pi}{\gamma^{3/2}}\uparrow\infty$ as $b\downarrow -3/16$.
\end{remark}
We recall that if $b>0$ the soliton $\phi_{1,2s^*}$ corresponds to borderline case in the stable/unstable region of solitons as in Figure \ref{fig:1}. By Proposition \ref{prop:1.2}, we have the following relation
\begin{align*}
 \frac{2\pi}{\sqrt{\gamma}} =M(\phi_{1,0})
 <M(\phi_{1,2s^*}) < M(\phi_{1,2}) = \frac{4\pi}{\sqrt{\gamma}},
\end{align*}
which implies that our mass condition improves the one \eqref{eq:1.16}. We note that
\begin{align*}
M (\phi_{1,2s^*})  \to 4\pi ~\text{as}~b\downarrow 0,
\end{align*}
which follows from the claim that $s^*(b)\to 1$ as $b\downarrow 0$. This means that the mass condition in Theorem \ref{thm:1.4} is compatible with $4\pi$-mass condition for (\ref{DNLS}).

The mass condition in the case $-3/16<b<0$ is more interesting. Since $0<\gamma <1$ in this case, the value $\frac{4\pi}{\gamma^{3/2}}$ is greater than $4\pi$. This means that $4\pi$-mass condition for (\ref{DNLS}) is improved due to the defocusing effect from the quintic term. Moreover, the value $\frac{4\pi}{\gamma^{3/2}}$ is even greater than the mass of algebraic solitons. Indeed, we have the following relation:
\begin{align*}
M (\phi_{1,2})  = \frac{4\pi}{\sqrt{\gamma}}<
\frac{4\pi}{\gamma^{3/2}} =M(\phi_{1,2})+P(\phi_{1,2}),
\end{align*}
which indicates that positive momentum of algebraic solitons boosts the threshold value. 

Our next result is a global result for large data. If we consider sufficiently oscillating data, we obtain the global result for arbitrarily large mass:     
\begin{theorem}
\label{thm:1.6}
Let $b>-3/16$. Given $\psi \in H^1(\R )$, and set the initial data as $u_{0,\mu}=e^{i\mu x}\psi$. Then, there exists $\mu_0=\mu_0(\psi ) >0$ such that if $\mu\geq\mu_0$, then the $H^1(\R)$-solution $u_{\mu}$ of \eqref{eq:1.1} with $u_{\mu}(0)=u_{0,\mu}$ exists globally both forward and backward in time. Moreover we have
\begin{align*}
\sup_{t\in\R} \| u_{\mu}(t)\|_{H^1}\leq C(\| u_{0,\mu}\|_{H^1})<\infty .
\end{align*}
\end{theorem}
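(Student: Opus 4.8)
The plan is to exploit the fact that the high-frequency modulation $e^{i\mu x}$ acts, after a Galilean boost, like a large \emph{defocusing} cubic perturbation, and then to run a potential-well argument in which this defocusing term supplies the coercivity that the plain mass condition lacks. First I would pass to the conserved functional
\[
\cN_\mu(u) := E(u) + \mu P(u) + \tfrac{\mu^2}{2}M(u),
\]
which is conserved along \eqref{eq:1.1} as a fixed linear combination of the three conserved quantities. A direct computation using $P(u_{0,\mu}) = -\mu M(\psi) + P(\psi)$ shows that the $\mu^2$- and $\mu$-terms from $E$ and $P$ cancel, leaving
\[
\cN_\mu(u_{0,\mu}) = E(\psi) + \tfrac{\mu}{4}\|\psi\|_{L^4}^4 =: E_\mu(\psi).
\]
Introducing the boosted gauge variable $w := e^{-i\mu x}\,\cG_{1/4}(u)$, which preserves the $L^2,L^4,L^6$ norms and is continuous in $H^1(\R)$ along the solution, the gauge identity \eqref{eq:1.17} (with the frequency shifted by $\mu$) gives the clean representation
\[
\cN_\mu(u) = F_\mu(w), \qquad F_\mu(w) := \tfrac12\|\partial_x w\|_{L^2}^2 - \tfrac{\gamma}{32}\|w\|_{L^6}^6 + \tfrac{\mu}{4}\|w\|_{L^4}^4.
\]
Thus $F_\mu(w(t)) = E_\mu(\psi)$ and $M(w(t)) = M(\psi)$ for all $t$. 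Since for fixed $\mu$ the correspondence $u_\mu(t)\leftrightarrow w(t)$ is a modulation composed with $\cG_{\pm 1/4}$, a uniform-in-$t$ bound on $\|\partial_x w(t)\|_{L^2}$ (with the conserved mass) transfers to one on $\|u_\mu(t)\|_{H^1}$, so it suffices to bound $\|\partial_x w(t)\|_{L^2}$.

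Next I would set up the associated Nehari functional along the mass-critical scaling, $\cK_\mu(w) := \tfrac{d}{d\lambda}F_\mu\bigl(\lambda^{1/2}w(\lambda\,\cdot)\bigr)\big|_{\lambda=1} = \|\partial_x w\|_{L^2}^2 - \tfrac{\gamma}{16}\|w\|_{L^6}^6 + \tfrac{\mu}{4}\|w\|_{L^4}^4$. The elementary identity $\cK_\mu = 2F_\mu - \tfrac{\mu}{4}\|w\|_{L^4}^4$ together with conservation of $F_\mu$ yields
\[
\cK_\mu(w(t)) = 2E_\mu(\psi) - \tfrac{\mu}{4}\|w(t)\|_{L^4}^4 \qquad\text{for all } t.
\]
At $t=0$ this gives $\cK_\mu(w(0)) = 2E(\psi) + \tfrac{\mu}{4}\|\psi\|_{L^4}^4 > 0$ once $\mu$ is large, and the aim is to prove that $\cK_\mu(w(t))$ never vanishes, i.e. that the good set $\{\cK_\mu>0\}$ is invariant along the trajectory. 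The payoff is immediate: on $\{\cK_\mu>0\}$ the identity above forces the a priori $L^4$ bound $\|w(t)\|_{L^4}^4 < 8E_\mu(\psi)/\mu$, which for large $\mu$ is of size $2\|\psi\|_{L^4}^4 = O(1)$, independent of the (possibly supercritical) mass.

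The heart of the matter, and the step I expect to be the main obstacle, is the quantitative gap that forces this invariance — note that the functional $F_\mu$ is \emph{not} bounded below on the mass sphere, so no crude energy argument will do. Suppose $\cK_\mu(w(t_*))=0$ at a first time $t_*$. Then $w(t_*)\neq 0$ (the mass is conserved and positive) lies on the Nehari manifold $\|\partial_x w\|_{L^2}^2 + \tfrac{\mu}{4}\|w\|_{L^4}^4 = \tfrac{\gamma}{16}\|w\|_{L^6}^6$, and inserting the \emph{subcritical} Gagliardo--Nirenberg inequality $\|w\|_{L^6}^6 \leq C_*\|w\|_{L^4}^{16/3}\|\partial_x w\|_{L^2}^{2/3}$ into this constraint produces, after eliminating $\|\partial_x w\|_{L^2}$, a lower bound $\|w(t_*)\|_{L^4}^4 \geq c\,\mu$ with $c=c(\gamma,C_*)>0$; equivalently the ground-state level of $F_\mu$ grows like $\mu^2$. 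On the other hand $\cK_\mu(w(t_*))=0$ forces $\|w(t_*)\|_{L^4}^4 = 8E_\mu(\psi)/\mu = O(1)$. These are incompatible for $\mu$ large, which is the contradiction; the work here is to make the two growth rates explicit and to fix $\mu_0=\mu_0(\psi)$ so that $E_\mu(\psi)\sim\mu$ stays below the $\sim\mu^2$ threshold. Consequently $\cK_\mu(w(t))>0$ on the whole existence interval.

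Finally, with the uniform bound $\|w(t)\|_{L^4}^4 \leq A_* := 8E_\mu(\psi)/\mu$ in hand, I would close the estimate on $\|\partial_x w(t)\|_{L^2}$ directly from energy conservation: from
\[
\tfrac12\|\partial_x w\|_{L^2}^2 = E_\mu(\psi) + \tfrac{\gamma}{32}\|w\|_{L^6}^6 - \tfrac{\mu}{4}\|w\|_{L^4}^4 \leq E_\mu(\psi) + \tfrac{\gamma}{32}C_*\,A_*^{4/3}\|\partial_x w\|_{L^2}^{2/3},
\]
the exponent $2/3<2$ lets Young's inequality absorb the last term, giving $\|\partial_x w(t)\|_{L^2}\leq C(\mu)$ for all $t$. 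Translating back through the $L^2$-isometries relating $w$ and $u_\mu$ yields $\sup_{t}\|u_\mu(t)\|_{H^1}<\infty$, and the blow-up alternative of the local theory in \cite{HO94a, Oz96} upgrades this a priori bound to global existence both forward and backward in time.
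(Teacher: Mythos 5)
Your argument is correct, and it reaches the theorem by a route that is recognizably the same in skeleton but genuinely more elementary in its key step. Your conserved functional $\cN_\mu$ is exactly the action $S_{\mu^2,2\mu}=E+\frac{\mu^2}{2}M+\frac{2\mu}{2}P$ along the algebraic-soliton ray $c=2\sqrt{\omega}$, and your boosted gauge variable $w=e^{-i\mu x}\cG_{1/4}(u)$ is precisely the quantity the paper controls through $\cL_{\mu^2,2\mu}(v)=\| \del_x(e^{-i\mu x}v)\|_{L^2}^2$; the computation $\cN_\mu(u_{0,\mu})=E(\psi)+\frac{\mu}{4}\|\psi\|_{L^4}^4=O(\mu)$ against a well depth of order $\mu^2$ is the same observation as Theorem \ref{thm:1.7} (iii-a) combined with $d(\mu^2,2\mu)=\mu^2 d(1,2)$. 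Where you diverge is twofold. First, you monitor the Pohozaev-type functional $\frac{d}{d\lambda}F_\mu(\lambda^{1/2}w(\lambda\cdot))|_{\lambda=1}$ rather than the Nehari functional $\cK_{\omega,c}(\varphi)=\frac{d}{d\lambda}\cS_{\omega,c}(\lambda\varphi)|_{\lambda=1}$ used in Section \ref{sec:5}; both work, but yours does not give the coercivity identity \eqref{eq:4.7} directly, which is why you need the extra Gagliardo--Nirenberg-plus-Young step at the end to recover the $\dot H^1$ bound (that step is fine, since the exponent $2/3<2$). Second, and more substantively, you replace the variational characterization of the solitons on the Nehari manifold (Proposition \ref{prop:4.1}, proved by concentration compactness) with a direct quantitative estimate: on the zero set of your functional, the two inequalities $\|\del_x w\|_{L^2}^2\leq \frac{\gamma C_*}{16}\|w\|_{L^4}^{16/3}\|\del_x w\|_{L^2}^{2/3}$ and $\frac{\mu}{4}\|w\|_{L^4}^4\leq\frac{\gamma C_*}{16}\|w\|_{L^4}^{16/3}\|\del_x w\|_{L^2}^{2/3}$ together force $\|w\|_{L^4}^4\gtrsim_\gamma\mu$, which is incompatible with the conservation identity $\|w(t_*)\|_{L^4}^4=8E_\mu(\psi)/\mu=O(1)$. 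This makes your proof of Theorem \ref{thm:1.6} self-contained and free of concentration compactness, which is a real simplification for this particular statement; the trade-off is that the crude Gagliardo--Nirenberg constant cannot detect the sharp well depth, so the same method would not yield the optimal mass threshold of Theorem \ref{thm:1.4}, whereas the paper's Proposition \ref{prop:4.1} serves both results at once. Two small points of hygiene: your set $\{\cK_\mu=0\}$ is a Pohozaev rather than a Nehari manifold (terminology only), and you should note explicitly that $w(t_*)\neq 0$ by mass conservation and that $t\mapsto\cK_\mu(w(t))$ is continuous (via the local Lipschitz continuity of $\cG_{1/4}$ on $H^1(\R)$) so that the intermediate value theorem applies.
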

This global result was first discovered in \cite{FHI17} for \eqref{DNLS}.\footnote{As seen in \cite{FHI17}, this global result still holds for the generalized derivative nonlinear Schr\"{o}dinger equation in $L^2$-supercritical setting. }
It is worthwhile to compare the global results for the quadratic oscillating data in \eqref{NLS}. Cazenave and Weissler \cite{CW92} established global existence for oscillating data as follows: Given $\psi \in H^{1,1}(\R)$ which is defined by
\begin{align*}
H^{1,1} (\R ) := \l\{ u\in H^1 (\R )~;~\braket{\cdot}u\in L^2(\R ) \r\}.
\end{align*}
Set the initial data as $u_{0,\beta} :=e^{i\frac{\beta |x|^2}{4}}\psi$. Then, there exists $\beta_0=\beta_0(\psi ) >0$ such that if $\beta\geq\beta_0$, the corresponding solution $u_{\beta}$ for \eqref{NLS} satisfies $C((0, \infty), H^{1,1}(\R))$.\footnote{The solution $u_{\beta}$ also satisfies $u_{\beta}\in L^{\infty}((0, \infty), H^1 (\R))$.} 

We note that the quadratic oscillating data only yields global solutions forward in time. In general the solution $u_{\beta}$ may blow up in a finite negative time (see \cite[Remark 6.5.9]{C03}).   
The other important difference is that the oscillating factor in Theorem \ref{thm:1.6} comes from the change of the momentum, but on the other hand, the quadratic oscillating factor in \cite{CW92} comes from pseudo-conformal transformation. We note that \eqref{eq:1.1} has no Galilean or pseudo-conformal invariance. In particular, due to the lack of Galilean invariance it is reasonable to consider that the momentum of initial data essentially influences global properties of the solution to (\ref{eq:1.1}).

The proofs of Theorem \ref{thm:1.4} and Theorem \ref{thm:1.6} are done by adapting variational arguments developed in the work \cite{FHI17}, and actually obtained from a more general result (Theorem \ref{thm:1.7} below). The key point in our approach is to give a variational characterization of the solitons on the Nehari manifold with respect to the action functional. However, for the equation \eqref{eq:1.1} when $b<0$, the quintic term $b|u|^4u$ becomes an obstacle to characterize the solitons.
To overcome that we consider the following gauge equivalent form:
\begin{align}
\label{ME}
\tag{1.1$'$}
i\del_t v+\del_x^2 v+\frac{i}{2}|v|^2\del_x v-\frac{i}{2}v^2\del_x\overline{v}+\frac{3}{16}\gamma |v|^4v=0,\quad (t,x) \in \R \times \R.
\end{align}
We note that \eqref{ME} is transformed from \eqref{eq:1.1} through the gauge transformation $v=\cG_{1/4}(u)$. When $b=0$ this equation is nothing but \eqref{DNLS'}. The equation \eqref{ME} has the following conserved quantities and solitons: 
\begin{align*}
\tag{Energy}
 \cE (v) &:=\frac{1}{2} \| \del_x v\|_{L^2}^2-\frac{\gamma}{32} \| v\|_{L^6}^6 ,
\\
\tag{Mass}
 \cM (v)&:=\| v \|_{L^2}^2,
\\
\tag{Momentum}
\cP (v)&:=\rbra[i\del_x v,v] +\frac{1}{4} \| v\|_{L^4}^4,\\
\tag{Soliton}
v_{\omega ,c}(t ,x) &:=\cG_{1/4}(u_{\omega ,c})(t,x)=e^{i\omega t}\varphi_{\omega ,c}(x-ct).
\end{align*}
We note that global well-posedness in $H^1(\R)$ for the equations \eqref{eq:1.1} and \eqref{ME} is equivalent since $u\mapsto\scG_{1/4}(u)$ is locally Lipschitz continuous on $H^1(\R)$.
 
From the energy formula of \eqref{ME} one can characterize solitons on the Nehari manifold if $b\geq -3/16$ (see Proposition \ref{prop:4.1}). Based on this variational characterization we formulate potential well theory with two parameters which is related to the classical work of Payne and Sattinger \cite{PS75}. We see that a two-parameter family of potential wells has rich and complex structure compared with the one-parameter one as in Section \ref{sec:1.3}. 

To state our main results, we prepare some notations. We define the action functional by
\begin{align*}
\cS_{\omega ,c}(\varphi ) :=\cE (\varphi)+\frac{\omega}{2} \cM (\varphi) +\frac{c}{2}\cP (\varphi).
\end{align*}
We note that $\varphi_{\omega, c}$ is a critical point of $\cS_{\omega,c}$, i.e., $\cS_{\omega ,c}'(\varphi_{\omega ,c}) =0$. We also define the functional by $\cK_{\omega ,c}(\varphi):=\l.\frac{d}{d\lambda}\cS_{\omega ,c}(\lambda\varphi)\r|_{\lambda =1}$. 
Similarly as in the case of \eqref{NLS}, we consider the following subsets of the energy space: 
\begin{align*}
\scA_{\omega ,c}:=& \l\{ \varphi\in H^1(\R ):  \cS_{\omega, c}(\varphi ) < \cS_{\omega ,c}(\varphi_{\omega, c})\r\}, \\ 
\scA_{\omega, c}^+ :=&\l\{ \varphi\in\scA_{\omega, c} :\cK_{\omega, c}(\varphi) \geq 0 \r\},\\
\scA_{\omega, c}^- :=&\l\{ \varphi\in\scA_{\omega, c} :\cK_{\omega ,c}(\varphi) < 0\r\}.
\end{align*}
Here we introduce the potential well along the scaling curve:
\begin{align*}
\scA_{s}:= \bigcup_{ \substack{\omega>0 } } \scA_{\omega ,2s\sqrt{\omega}},~
\scA_{s}^{\pm}:= \bigcup_{ \substack{\omega>0 } } \scA_{\omega ,2s\sqrt{\omega}}^{\pm}
\quad\text{for}~s\in (-1,1].
\end{align*}
We define the mass threshold value in Theorem \ref{thm:1.4} as
\begin{align}
\label{eq:1.18}
M^* =M^*(b):=
\l\{
\begin{array}{ll}
M(\phi_{1,2s^*(b)}) &\text{if}~b\geq 0,\\[3pt]
M(\phi_{1,2})+P(\phi_{1,2})  &\text{if}~-3/16<b\leq 0.
\end{array}
\r.
\end{align}
We note that $M^*(0)$ is well defined since 
\begin{align*}
M(\phi_{1,2s^*(0)})=M(\phi_{1,2})~\text{and}~P(\phi_{1,2})=0 \quad\text{when}~b=0.
\end{align*}


Our main result in this paper is the following classification of a two-parameter family of potential wells which covers Theorem \ref{thm:1.4} and Theorem \ref{thm:1.6}.
\begin{theorem}
\label{thm:1.7}
Let $b>-3/16$ and let $(\omega ,c)$ satisfy $-2\sqrt{\omega}<c\leq 2\sqrt{\omega}$. Then, each of $\scA_{\omega ,c}^{+}$ and 
$\scA_{\omega ,c}^{-}$ is invariant under the flow of \eqref{ME}. If $v_0 \in\scA_{\omega ,c}^+$, then the $H^1(\R)$-solution $v$ of \eqref{ME} with $v(0)=v_0$ exists globally both forward and backward in time, and satisfies the following uniform estimate\textup{:}
\begin{align}
\label{eq:1.19}
\| \del_x v\|_{L^{\infty}(\R ,L^2)}^2 \leq 8\cS_{\omega ,c}(v_0)+\frac{c^2}{2}\cM (v_0).
\end{align}
Moreover the following statements hold\textup{:}
\begin{enumerate}[\rm (i)]
\setlength{\itemsep}{3pt}
\item For each $s\in (-1,1]$, $\scA^+_{s}$ and $\scA^-_s$ have no elements in common on the set $\{ \varphi\in H^1(\R) : \cM(\varphi ) \geq M^*\}$. 

\item  If $\cM (\varphi)<M^*$, or $\cM (\varphi)=M^*$ and $\cP (\varphi)<0$, then $\varphi\in\scA^+_{s^*}$ if $b\geq 0$, or $\varphi\in\scA^+_{1}$ if $-3/16< b\leq 0$.

\item For given $\psi\in H^1(\R)\setminus\{ 0\}$ the following properties hold\textup{:}
\begin{enumerate}[\rm (a)]
\item There exists $\mu_0 =\mu_0 (\psi)>0$ such that if $\mu\geq\mu_0$, then $e^{i\mu x}\psi\in \scA^+_1$.

\item There exist $\eps\in (0,1)$ and large $\mu >0$ such that $e^{-i(1-\eps )\mu x}\psi\in \scA^-_{-(1-\eps)}$, where $\eps$ and $\mu$ depend on $\psi$.
\end{enumerate}

\item Assume $\cE(\varphi)<0$.
 Then $\varphi\in\bigcap_{-1<s\leq 1}\scA^-_s$. In particular, if $M(\varphi)\geq M^*$, then $\varphi\not\in\bigcup_{-1< s\leq 1}\scA_s^+$.\footnote{The negative energy is possible only when $\cM(\varphi)> \frac{2\pi}{\sqrt{\gamma}}$. Note that the following case
\begin{align*}
\cE (\varphi) <0, \cM(\varphi) =M^*~\text{and}~\cP (\varphi)\leq 0
\end{align*} 
does not occur from the assertion (vi).}

\item Assume $\cE(\varphi) \geq 0$ and $\cM (\varphi) \geq M^*$. 
If $\cP(\varphi) \geq 0\,(\text{resp.}\,\cP(\varphi)\leq 0)$, then $\varphi\not\in\bigcup_{0\leq s\leq 1}\scA_s\,(\text{resp.}\,\varphi\not\in\bigcup_{-1<s\leq 0}\scA_s)$. In particular, if $\cP(\varphi)=0$, then $\varphi\not\in\bigcup_{-1< s\leq 1}\scA_s$.

\item Assume $\cM (\varphi)=M^*$. Then the following properties hold\textup{:}
\begin{enumerate}[\rm(a)]
\item When $b\geq 0$, $\cE(\varphi)=\cP(\varphi)=0$ if and only if there exist $\theta , y\in\R$ and $\omega >0$ such that $\varphi =e^{i\theta}\varphi_{\omega ,2s^*\sqrt{\omega} }(\cdot -y)$. Moreover, there exists no $\varphi\in H^1(\R)$ such that $\cE(\varphi)<0$ and $\cP(\varphi)\leq 0$, or $\cE(\varphi)\leq 0$ and $\cP(\varphi) <0$.

\item When $-3/16 <b<0$, there exists no $\varphi\in H^1(\R)$ such that $\cE(\varphi)\leq 0$ and $\cP(\varphi)\leq 0$.

\end{enumerate}
\end{enumerate}
\end{theorem}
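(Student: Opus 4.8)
I would build the whole proof on the variational characterization of Proposition~\ref{prop:4.1}, which I take as given: the soliton $\varphi_{\omega,c}$ minimizes $\cS_{\omega,c}$ on the Nehari manifold $\{\varphi\neq0:\cK_{\omega,c}(\varphi)=0\}$, uniquely up to phase and translation, so that $d(\omega,c):=\cS_{\omega,c}(\varphi_{\omega,c})$ equals the Nehari infimum and $\scA_{\omega,c}$ contains no nonzero zero of $\cK_{\omega,c}$. The flow-invariance of $\scA_{\omega,c}^{\pm}$ then follows from conservation of $\cS_{\omega,c}$ together with continuity of $t\mapsto\cK_{\omega,c}(v(t))$, and the bound \eqref{eq:1.19} follows on $\scA_{\omega,c}^+$ from $\cK_{\omega,c}(v)\ge0$: the combination $\cS_{\omega,c}-\tfrac16\cK_{\omega,c}$ cancels the focusing term $\|v\|_{L^6}^6$, and completing the square in the kinetic-momentum part using $\omega\ge c^2/4$ yields coercivity, the residual $\tfrac{c^2}2\cM(v_0)$ measuring the gap between $\|\partial_x v\|_{L^2}^2$ and $\|\partial_x v-\tfrac{ic}2 v\|_{L^2}^2$. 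Global existence is then immediate from the local theory and the blow-up alternative.

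For the classification I would reduce everything to one scaling variable. Writing $c=2s\sqrt\omega$ and $\mu=\sqrt\omega$, one computes $\cS_{\omega,2s\sqrt\omega}(\varphi)-d(\omega,2s\sqrt\omega)=f_s(\mu)$, where $f_s(\mu):=(\tfrac12\cM(\varphi)-d_s)\mu^2+s\cP(\varphi)\mu+\cE(\varphi)$ and $d_s:=\cS_{1,2s}(\varphi_{1,2s})$; hence $\varphi\in\scA_s$ iff $f_s(\mu)<0$ for some $\mu>0$, and the sign of $\cK$ splits $\scA_s$ into $\scA_s^{\pm}$. The Grillakis--Shatah--Strauss relations $\partial_\omega\cS_{\omega,c}(\varphi_{\omega,c})=\tfrac12\cM(\varphi_{\omega,c})$ and $\partial_c\cS_{\omega,c}(\varphi_{\omega,c})=\tfrac12\cP(\varphi_{\omega,c})$, combined with the soliton scaling \eqref{eq:1.12}, give the key identity $d_s=\tfrac12\big(\cM(\varphi_{1,2s})+s\cP(\varphi_{1,2s})\big)$. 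Since the gauge transformation preserves momentum, i.e. $\cP(\cG_{1/4}(u))=P(u)$, this yields $M^*=2d_{\tilde s}$ together with the threshold soliton's conserved quantities: $\cE(\varphi_{1,2\tilde s})=\cP(\varphi_{1,2\tilde s})=0$ when $b\ge0$ (with $\tilde s=s^*$), and $\cP(\varphi_{1,2})>0$, $\cE(\varphi_{1,2})=-\tfrac12\cP(\varphi_{1,2})<0$ when $-3/16<b<0$ (with $\tilde s=1$, using Proposition~\ref{prop:1.3}). The first of these gives the ``if'' direction of (vi)(a) by direct computation, and assertions (i)--(v) follow from the discriminant analysis of the quadratic $f_s$.

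For (vi) set $\cM(\varphi)=M^*=2d_{\tilde s}$, so the leading coefficient of $f_{\tilde s}$ vanishes and $f_{\tilde s}(\mu)=\tilde s\cP(\varphi)\mu+\cE(\varphi)$ is \emph{linear}. The core impossibility is the interior case $\cE(\varphi)<0$, $\cP(\varphi)<0$: assertion (ii) (with $\cP<0$) places $\varphi\in\scA_{\tilde s}^+$, while assertion (iv) (with $\cE<0$) places $\varphi\in\scA_{\tilde s}^-$, contradicting the disjointness in assertion (i). The two boundary configurations reduce to this one by a phase (Galilean-type) modulation $v_\delta:=e^{i\delta x}\varphi$, which preserves the mass and transforms $\cP(v_\delta)=\cP(\varphi)-\delta\cM(\varphi)$ and $\cE(v_\delta)=\cE(\varphi)+\delta\mathfrak{m}+\tfrac{\delta^2}2\cM(\varphi)$ with $\mathfrak{m}=\tfrac14\|\varphi\|_{L^4}^4-\cP(\varphi)$. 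When $\cE<0,\cP=0$ a small $\delta>0$ makes $\cP(v_\delta)<0$ while keeping $\cE(v_\delta)<0$; when $\cE=0,\cP<0$ one has $\mathfrak{m}>0$, so a small $\delta<0$ makes $\cE(v_\delta)<0$ while keeping $\cP(v_\delta)<0$. Either way $v_\delta$ falls into the forbidden interior case, ruling out both boundary configurations and establishing the non-existence parts of (vi)(a), and, together with $\cP(\varphi_{1,2})>0$, of (vi)(b).

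It remains to treat $\cE(\varphi)=\cP(\varphi)=0$ with $\cM(\varphi)=M^*$, the rigidity heart of the statement. Here $\varphi\neq0$, and the map $\omega\mapsto\cK_{\omega,2\tilde s\sqrt\omega}(\varphi)=-\tfrac{\gamma}8\|\varphi\|_{L^6}^6+\omega M^*+\tfrac{\tilde s\sqrt\omega}2\|\varphi\|_{L^4}^4$ is strictly increasing from a negative value (as $\omega\to0^+$) to $+\infty$, so it has a unique zero $\omega_0>0$. At $\omega_0$ one has simultaneously $\cK_{\omega_0,2\tilde s\sqrt{\omega_0}}(\varphi)=0$ and, since $\cE=\cP=0$, $\cS_{\omega_0,2\tilde s\sqrt{\omega_0}}(\varphi)=\tfrac{\omega_0}2 M^*=\omega_0 d_{\tilde s}=d(\omega_0,2\tilde s\sqrt{\omega_0})$; thus $\varphi$ is a minimizer of $\cS$ on the Nehari manifold, and Proposition~\ref{prop:4.1} forces $\varphi=e^{i\theta}\varphi_{\omega_0,2\tilde s\sqrt{\omega_0}}(\cdot-y)$. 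For $b\ge0$ this is exactly the ``only if'' direction of (vi)(a); for $-3/16<b<0$ the identified soliton is the algebraic one, whose momentum $\cP$ is strictly positive, contradicting $\cP(\varphi)=0$, so this configuration cannot occur and (vi)(b) is complete. I expect the main obstacle to lie precisely in Proposition~\ref{prop:4.1} and its equality case: because the scaling $\lambda\mapsto\cS_{\omega,c}(\lambda\varphi)$ mixes the powers $\lambda^2,\lambda^4,\lambda^6$, the Nehari minimization and the identification of the unique minimizer are more delicate than in the classical single-power setting, and the momentum and $L^4$ terms introduced by the gauge must be tracked carefully throughout.
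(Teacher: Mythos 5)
Your overall strategy coincides with the paper's: Proposition \ref{prop:4.1} as the foundation, flow invariance from conservation of $\cS_{\omega,c}$ and continuity of $t\mapsto\cK_{\omega,c}(v(t))$, and the reduction of (i)--(vi) to the sign analysis of $f_s(\mu)=\cS_{\mu^2,2s\mu}(\varphi)-d(\mu^2,2s\mu)$ along the scaling curve together with the identity $2d(1,2s)=\cM(\varphi_{1,2s})+s\cP(\varphi_{1,2s})$ (the paper derives it from the Pohozaev relation $E(\phi_{\omega,c})=-\tfrac{c}{4}P(\phi_{\omega,c})$; your Grillakis--Shatah--Strauss derivation is equivalent). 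Your treatment of the non-existence part of (vi) is genuinely different and is correct: the paper rules out all sign configurations at once by locating a zero of $\mu\mapsto\cK_{\mu^2,2\tilde s\mu}(\varphi)$ inside $\{f_{\tilde s}<0\}$ and invoking Proposition \ref{prop:4.1}, whereas you obtain the interior case $\cE<0$, $\cP<0$ as a corollary of the already-proved (i), (ii), (iv) and reduce the two boundary cases to it by the modulation $e^{i\delta x}\varphi$ (your formulas for $\cM$, $\cP$, $\cE$ of $e^{i\delta x}\varphi$ check out). This makes the logical dependence on the earlier assertions explicit and is a clean alternative; the rigidity case $\cE=\cP=0$ is handled exactly as in the paper.

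Three points need repair or expansion. First, the combination $\cS_{\omega,c}-\tfrac16\cK_{\omega,c}=\tfrac13\cL_{\omega,c}+\tfrac{c}{24}\|\cdot\|_{L^4}^4$ does cancel the $L^6$ term, but it leaves an $L^4$ term carrying the sign of $c$, so it is not coercive when $c<0$; the paper instead uses $\cS_{\omega,c}-\tfrac14\cK_{\omega,c}=\tfrac14\cL_{\omega,c}+\tfrac{\gamma}{64}\|\cdot\|_{L^6}^6$, which cancels the $L^4$ term and is manifestly nonnegative for $\gamma>0$; this choice is also what produces the constant $8$ in \eqref{eq:1.19}. Second, assertions (i), (iv), (v) require not just $M^*=2d(1,2\tilde s)$ but the maximality $\max_{-1<s\le 1}2d(1,2s)=M^*$, so that $\cM(\varphi)\ge M^*$ forces the leading coefficient of $f_s$ to be nonnegative for \emph{every} $s$; this is Lemma \ref{lem:2.9}, proved via $\tfrac{d}{ds}d(1,2s)=P(\phi_{1,2s})$ and the momentum sign analysis of Proposition \ref{prop:2.7}, and you should state it, since the entire discriminant analysis rests on it. Third, assertion (iii) is not a discriminant statement about a fixed $\varphi$: the datum $e^{i\mu x}\psi$ moves with $\mu$, and (iii-a) rests on the exact cancellation of the $\mu^2\|\psi\|_{L^2}^2$ terms when the oscillation frequency is matched to $(\omega,c)=(\mu^2,2\mu)$, while (iii-b) additionally uses $d(1,2s)\to0$ as $s\downarrow-1$ and a continuity argument in $s$; neither is covered by your summary.
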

\begin{remark}
\label{rem:1.8}
Concerning the assertion (i), we recall that $\scA^+$ and $\scA^-$ for \eqref{NLS} in Section \ref{sec:1.3} 
are mutually disjoint.
However, in general $\scA_s^+$ and $\scA_s^-$ have elements in common on the set $\{ \varphi\in H^1(\R) : \cM(\varphi ) < M^*\}$. For example it follows from the assertions (ii) and (iv) that
\begin{align*}
\cE(\varphi)<0, ~\frac{2\pi}{\sqrt{\gamma}}<\cM(\varphi)<M^* 
\Longrightarrow 
\begin{array}{ll}
\varphi\in \scA_{s^*}^+ \cap \scA_{s^*}^- &\text{if}~b\geq0, \\
\varphi\in \scA_{1}^+ \cap \scA_{1}^- &\text{if}~-\frac{3}{16}<b\leq 0.
\end{array}
\end{align*}
 This gives a notable property of two-parameter family of potential wells, which is also closely related to the stability of solitons (see \cite{H19}). We note that the interval $(0,M^*(b))$ for $b\geq 0$ corresponds to the range of the mass of stable solitons.
\end{remark}
\begin{remark}
\label{rem:1.9}
For given $\psi\in H^1(\R)$ and $c\in\R$, we have
\begin{align}
\label{eq:1.20}
\cE (e^{icx}\psi)\sim c^2 , ~ \cP (e^{icx}\psi)\sim -c\quad\text{as}~|c|\gg 1.
\end{align}
In particular one can see that oscillating factor in Theorem \ref{thm:1.6} causes to change the momentum to the negative direction. The assertion (iii-b) gives the counterpart of this global result, and implies that the oscillating direction is essential for generating global and bounded solutions.
\end{remark}
\begin{remark}
\label{rem:1.10}
When $b=-3/16$, one can still prove that $\scA_{\omega ,c}^{\pm}$ is invariant under the flow, and that for $v_0\in\scA^+_{\omega ,c}$ the corresponding solution satisfies the uniform estimate \eqref{eq:1.19}. Moreover we have the following claim (Proposition \ref{prop:5.2}):
\begin{align*}
\bigcup_{-1< s<0}\scA_s =\bigcup_{-1< s<0}\scA_s^+=H^1(\R).
\end{align*}
This gives the characterization by potential well theory to the global result in the case $b=-3/16$.
\end{remark}
Theorem \ref{thm:1.7} is the first classification theorem for a two-parameter family of potential wells. For \eqref{DNLS} the relation between $4\pi$-mass condition and $\scA^+_{\omega ,c}$ was first pointed out in \cite{FHI17}, but in the present paper we give a characterization for $\scA^-_{\omega ,c}$ as well as $\scA^+_{\omega ,c}$. Adopting a family of potential wells along the scaling curve is a new idea, which is useful to examine the properties of potential wells.

The assertions (ii) and (iii-a) give a representation by potential wells to the global results in Theorem \ref{thm:1.4} and Theorem \ref{thm:1.6}. It follows from Proposition \ref{prop:1.2} that $\scA_s^+$ contains solitons with arbitrarily small mass,\footnote{Furthermore, one can prove that $\| \phi_{1,2s}\|_{H^1}\to 0$ as $s\downarrow -1$, which implies that \eqref{eq:1.1} has the solitons with arbitrarily small $H^1(\R)$-norm.} which implies that the solutions for the data in $\scA_s^+$ do not scatter in general.
Also, we note that the equation \eqref{eq:1.1} corresponds to the long range scattering, and it is known that modified scattering occurs for small data in weighted Sobolev spaces (see \cite{HO94modi, Oz96, GHLN13}). These properties give quite different situation from the set $\scA^+$ for \eqref{NLS}. 
For \eqref{DNLS} it was proved in \cite{JLPS18a} that the soliton resolution holds for generic data in $H^{2,2}(\R)$, but the global dynamics in the energy space is still far from clear.   

The assertions (iv) and (v) show the optimality of the mass threshold value $M^*$ in the sense that for any $\rho\geq M^*$ there exists $\varphi\in H^1(\R)$ such that $\cM (\varphi)=\rho$ and $\varphi\not\in \scA^+_{s}$ for any $s\in (-1,1]$. The set of the data satisfying
\begin{align}
\label{eq:1.21}
\cE(\varphi)<0, \cM(\varphi)>M^* ~\text{or}~\cE(\varphi)<0, \cM(\varphi)=M^*, \cP (\varphi)>0 
\end{align}
is an important subset contained in $\bigcap_{-1<s\leq 1}\scA^-_s$, and has analogy with the set $\scB$ for \eqref{NLS}.
The set of the data satisfying
\begin{align}
\label{eq:1.22}
\cE(\varphi)\geq 0, \cM(\varphi) \geq M^*, \cP(\varphi)=0 
\end{align}
gives a subset of the complement of $\scA_s$ for any $s\in (-1,1]$. If we replace $\cP(\varphi)=0$ by $\cP(\varphi)\neq 0$, then this inclusion does not hold.  Indeed, it follows from \eqref{eq:1.20} and the assertion (iii) that the oscillating data $e^{icx}\psi$ for large $|c|>0$ gives the counterexample.

The assertion (vi) gives some constraint condition on $\cM(\varphi )= M^*$. When $b\geq 0$ the following set
\begin{align*}
\scB_0 :=\l\{ \varphi\in H^1(\R) : \scM (\varphi)=M^*, \,\cE (\varphi)=\cP (\varphi)=0\r\}
\end{align*}
gives the boundary of both $\scA_{s^*}^+$ and $\scA_{s^*}^-$. From (vi-a) the set $\scB_0$ corresponds to the borderline solitons with respect to stability/instability, i.e.,
\begin{align}
\label{eq:1.23}
\scB_0 = \l\{ e^{i\theta}\varphi_{\omega ,2s^*\sqrt{\omega}}(\cdot -y): \theta, y \in\R, \,\omega >0\r\},
\end{align}
which gives analogy with the relation \eqref{eq:1.10} for \eqref{NLS}. On the other hand, when $-3/16<b<0$ the set $\scB_{0}$ is empty.

From Theorem \ref{thm:1.7} we see that the mass threshold value $M^*$ gives the turning point in the structure of potential wells. In this sense $M^*$ corresponds to the value $2\pi$ for \eqref{NLS}.
Therefore, taking into account the mass critical structure of the equation, we conjecture that the mass condition in Theorem \ref{thm:1.4} is sharp. To state more precisely, let us say that $(GE)(u_0)$ holds for $u_0\in H^1(\R)$ if  the $H^1(\R)$-solution $u$ of \eqref{eq:1.1} with $u(0)=u_0$ is global both forward and backward in time, and uniformly bounded in $H^1(\R)$, i.e., $u\in (C\cap L^{\infty})(\R ,H^1(\R))$. We define the positive value $m^*$ by
\begin{align*}
m^* := \sup \l\{ m>0 : \forall u_0 \in H^1(\R), M(u_0)<m \Rightarrow (GE)(u_0)~\text{holds} \r\}.
\end{align*}
Then, our conjecture is organized as follows:
\begin{conjecture}
\label{conj:1.11}
Let $b>-3/16$. Then $m^*=M^*$.
\end{conjecture}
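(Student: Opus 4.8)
The plan is to establish the two inequalities $m^*\geq M^*$ and $m^*\leq M^*$ separately. The first is already contained in the global theory above. By Theorem~\ref{thm:1.4}, every $u_0\in H^1(\R)$ with $M(u_0)<M^*$ generates an $H^1$-solution of \eqref{eq:1.1} that is global both forward and backward in time and satisfies $\sup_{t\in\R}\|u(t)\|_{H^1}<\infty$, so $(GE)(u_0)$ holds; hence every $m\leq M^*$ lies in the set defining $m^*$, and taking the supremum gives $m^*\geq M^*$. (Equivalently one may pass to the gauge-equivalent form \eqref{ME}, place the data in $\scA_{s^*}^+$ or $\scA_1^+$ via Theorem~\ref{thm:1.7}(ii), and use the uniform bound \eqref{eq:1.19}.) The content of the conjecture is therefore the reverse inequality $m^*\leq M^*$, that is, the \emph{sharpness} of the threshold: one must produce, for every $\eps>0$, data $u_0\in H^1(\R)$ with $M^*\leq M(u_0)<M^*+\eps$ for which $(GE)(u_0)$ fails, meaning the solution either blows up in finite time or loses its uniform $H^1$-bound.

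The variational picture of Theorem~\ref{thm:1.7} tells us where the candidate data should live. Assertions (i), (iv) and (v) isolate the functions with $\cM(\varphi)\geq M^*$ lying in $\bigcap_{-1<s\leq1}\scA_s^-$, for instance those with $\cE(\varphi)<0$; these play the role that the blow-up set $\scB=\scA^-$ plays for \eqref{NLS}, and by assertion (iv) they lie outside every $\scA_s^+$ once $M(\varphi)\geq M^*$. For $b>0$ one has explicit candidates near the soliton family: by Proposition~\ref{prop:1.2} and the stability analysis of \cite{O14,NOW17}, the solitons $\phi_{\omega,c}$ with $2s^*\sqrt{\omega}<c<2\sqrt{\omega}$ are orbitally unstable and their masses sweep out $(M^*,4\pi/\sqrt{\gamma}]$, approaching $M^*$ from above as $c\downarrow 2s^*\sqrt{\omega}$, so perturbations of these unstable solitons are the natural test data. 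The regime $-3/16<b<0$ is more delicate: there $M^*=4\pi/\gamma^{3/2}$ lies strictly above the maximal soliton mass $4\pi/\sqrt{\gamma}$, so \emph{no} soliton reaches the threshold and the candidate data must instead be genuinely non-solitonic negative-energy (or positive-momentum boosted) configurations supplied abstractly by assertion (iv).

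The hard part, and the reason this remains a conjecture, is upgrading this \emph{static} variational information to the \emph{dynamical} statement that $(GE)$ actually fails. For \eqref{NLS} the corresponding sharpness is produced by an explicit finite-time blow-up mechanism coming from the pseudo-conformal and Galilean symmetries, together with the virial identity that forces negative-energy data to blow up; equation \eqref{eq:1.1} possesses neither invariance, and no virial-type monotonicity formula yielding finite-time blow-up is known for derivative Schr\"{o}dinger equations, which is exactly the obstruction that has left the behaviour of \eqref{DNLS} above the $4\pi$ threshold open. I would attempt the dynamical step along two lines. The first is to prove \emph{strong instability} (instability by blow-up or by unbounded $H^1$-growth) of the unstable solitons for $b>0$, adapting the Grillakis--Shatah--Strauss and Berestycki--Cazenave framework to the two-parameter setting and trying to extract a coercive virial-type quantity from the sign of $\cK_{\omega,c}$ on $\scA_{\omega,c}^-$; the decisive difficulty is again the lack of a usable localized virial identity, so one must either identify a substitute monotone quantity or argue by a concentration-compactness contradiction. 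The second, sharper obstruction appears at $b=0$: the inverse-scattering results of \cite{JLPS18,JLPS18a} give global well-posedness and soliton resolution for all data in $H^{2,2}(\R)$, so any failure of $(GE)$ just above $4\pi$ must be detected either on data in $H^1(\R)\setminus H^{2,2}(\R)$ --- the mass-$4\pi$ algebraic soliton itself sits in this gap --- or through loss of the uniform bound rather than finite-time blow-up. Deciding whether the failure of $(GE)$ occurs by blow-up or by growth, and reconciling it with integrability at $b=0$, is the central open problem; a complete proof of $m^*\leq M^*$ would very likely require a genuinely new a priori estimate or instability mechanism specific to the mass-critical derivative nonlinearity.
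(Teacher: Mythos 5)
The statement you were asked to prove is Conjecture \ref{conj:1.11}; the paper contains no proof of it, and none exists. The only part that is actually a theorem is the lower bound, and there your argument coincides with the paper's: immediately after stating the conjecture the author remarks that Theorem \ref{thm:1.4} implies $m^*\geq M^*$, which is exactly your first paragraph (your parenthetical alternative via Theorem \ref{thm:1.7}(ii) and the uniform bound \eqref{eq:1.19} is also how the paper itself derives Theorem \ref{thm:1.4}). So on the provable half you are correct and take the same route as the paper. Your identification of where sharpness should come from also tracks the paper's own discussion closely: the data satisfying \eqref{eq:1.21} as the analogue of the blow-up set $\scB$ for \eqref{NLS}, the unstable solitons of \cite{O14, NOW17} for $b>0$ with masses sweeping $(M^*, 4\pi/\sqrt{\gamma}]$, the absence of Galilean/pseudo-conformal invariance and of a virial identity, and the fact that the $H^{2,2}(\R)$ results of \cite{JLPS18, JLPS18a} exclude neither infinite-time growth nor behaviour on $H^1(\R)\setminus H^{2,2}(\R)$ (where the algebraic soliton lives) are all points the paper makes in Section 1.5. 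Your observation that for $-3/16<b<0$ the threshold $M^*=4\pi/\gamma^{3/2}$ exceeds the maximal soliton mass $4\pi/\sqrt{\gamma}$, so no soliton can witness sharpness there, is correct and is implicit in the paper's comparison $M(\phi_{1,2})=4\pi/\sqrt{\gamma}<4\pi/\gamma^{3/2}$.

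The genuine gap is the one you yourself flag: nothing in the proposal upgrades the static variational information to failure of $(GE)$, so the inequality $m^*\leq M^*$ is not proved. Two specific points deserve emphasis. First, orbital instability of $\phi_{\omega,c}$ for $2s^*\sqrt{\omega}<c<2\sqrt{\omega}$ is strictly weaker than the strong instability (blow-up or unbounded $H^1$ growth) needed to negate $(GE)$; you acknowledge this, but it means the $b>0$ route has no current endgame. Second, the flow-invariance of $\scA_{\omega,c}^-$ established in Lemma \ref{lem:5.1} yields no monotone quantity: for \eqref{NLS} the set $\scA^-$ forces blow-up only because the virial identity converts $\cK_{\omega}<0$ into concavity of the variance, and no such identity is known here, precisely the obstruction the paper names when it says the condition \eqref{eq:1.21} gives only ``certain obstruction'' to global boundedness. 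In short, your submission should be read as a correct proof of $m^*\geq M^*$ (matching the paper) together with an accurate, well-informed account of why the converse is open --- an honest assessment of a conjecture, not a proof of it.
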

Theorem \ref{thm:1.4} implies that $m^*\geq M^*$. Although for \eqref{DNLS} global existence was proved for any initial data in $H^{2,2}(\R)$ in \cite{JLPS18}, we note that their results do not imply nonexistence of infinite time blow-up solutions, i.e., the $H^1(\R)$-norm of the solution may be unbounded in time. Also, existence/nonexistence of finite time blow-up solutions in $H^1(\R)$ is still an open problem. It is known that finite time blow-up occurs for \eqref{DNLS} on a bounded interval or on the half line, with Dirichlet boundary condition (see \cite{Tan04, Wu13}). The data satisfying the condition \eqref{eq:1.21} is a good candidate generating singular solutions. From Theorem \ref{thm:1.7} and analogy with \eqref{NLS}, one can say that this condition gives certain obstruction for generating global and bounded solutions. 

Related to Conjecture \ref{conj:1.11}, one can prove the following blow-up criterion:
\begin{theorem}
\label{thm:1.12}
Let the initial data $u_0\in H^1(\R)$ satisfy $M(u_0)=4\pi$. Suppose that the corresponding solution $u$ of \eqref{DNLS} blows up in time $T^*\in (0,\infty]$.\footnote{We say that the solution $u$ blows up in infinite time if $\lim_{t\to\infty}\| \del_x u(t)\|_{L^2}=\infty$.} Then, there exist functions $\theta (t)\in \R$ and $y(t)\in\R$ such that 
\begin{align}
\label{eq:1.24}
u(t)-\frac{e^{i\theta (t)}}{\lambda(t)^{1/2}}\phi_{1,2}\l( \frac{x-y(t)}{\lambda (t)}\r)  \to 0~\text{in}~H^1(\R)~\text{as}~t\to T^*,
\end{align}
where $\lambda (t):=\| \del_x\phi_{1,2}\|_{L^2} / \| \del_x u(t) \|_{L^2}$.
\end{theorem}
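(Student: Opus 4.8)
The plan is to pass to the gauge-equivalent equation \eqref{DNLS'} (that is, \eqref{ME} with $b=0$, so $\gamma=1$) through $v=\cG_{1/4}(u)$, for which the algebraic solitons become the borderline solitons $\varphi_{\omega,2\sqrt{\omega}}$ characterized in Theorem \ref{thm:1.7}(vi-a). Since $u\mapsto\cG_{1/4}(u)$ is bi-Lipschitz on $H^1(\R)$ and $|v|=|u|$, the hypotheses transfer: $\cM(v(t))=M(u_0)=4\pi=M^*$ for all $t$, and $\|\del_x u(t)\|_{L^2}\to\infty$ is equivalent to $\|\del_x v(t)\|_{L^2}\to\infty$ as $t\to T^*$. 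It therefore suffices to prove the analogue of \eqref{eq:1.24} for $v$, with $\phi_{1,2}$ replaced by $\varphi_{1,2}$, and then undo the gauge transform, which maps the soliton orbit of $\varphi_{\omega,2\sqrt{\omega}}$ onto that of $\phi_{\omega,2\sqrt{\omega}}$ and, via the scaling relation \eqref{eq:1.12}, onto the orbit of $\phi_{1,2}$ under scaling, translation and phase.

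The core is a concentration--compactness analysis of the rescaled profiles. Fix a sequence $t_n\uparrow T^*$ (allowing $T^*=\infty$), set $\lambda_n:=\|\del_x\varphi_{1,2}\|_{L^2}/\|\del_x v(t_n)\|_{L^2}\downarrow 0$, and define the $L^2$-invariant rescaling $w_n(x):=\lambda_n^{1/2}v(t_n,\lambda_n x)$, so that $\|\del_x w_n\|_{L^2}=\|\del_x\varphi_{1,2}\|_{L^2}$ and $\cM(w_n)=4\pi$ for every $n$. First I would record the exact scaling of the conserved quantities: under $v\mapsto\lambda^{1/2}v(\lambda\cdot)$ every term of $\cE$ is homogeneous of degree $2$ and every term of $\cP$ of degree $1$, so that $\cE(w_n)=\lambda_n^2\,\cE(v_0)$ and $\cP(w_n)=\lambda_n\,\cP(v_0)$ by conservation along the flow of \eqref{DNLS'}. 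Hence $\cE(w_n)\to0$ and $\cP(w_n)\to0$, while the mass stays pinned at the critical value $M^*$. The exact homogeneity here is special to the pure case $b=0$ and is presumably the reason the theorem is restricted to \eqref{DNLS}.

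Next I would invoke a profile decomposition adapted to \eqref{DNLS'} to extract, after a subsequence and after applying the translation and phase symmetries, a weak limit $\varphi$ of $(w_n)$. Since $\cE(w_n)\to0$ while $\|\del_x w_n\|_{L^2}$ is a fixed positive constant, the defect term $\tfrac{1}{32}\|w_n\|_{L^6}^6$ stays bounded away from $0$; this excludes the vanishing scenario and forces $\varphi\neq0$. The decisive step is to rule out any loss of mass, i.e.\ to upgrade weak to strong $L^2$ (and then $H^1$) convergence with $\cM(\varphi)=4\pi$. This is precisely where $M^*=4\pi$ enters: the sharp variational inequality underlying Theorem \ref{thm:1.7} forbids any profile of mass $\le M^*$ from carrying data with $\cE<0$, $\cP\le0$ or $\cE\le0$, $\cP<0$, so in a dichotomy each bubble must satisfy $\cE\ge0$ and $\cP\ge0$ separately; combined with the asymptotic additivity of $\cE$ and $\cP$ across the decomposition and with $\cE(w_n),\cP(w_n)\to0$, this forces a single bubble carrying the full mass $4\pi$ with $\cE(\varphi)=\cP(\varphi)=0$. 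By Theorem \ref{thm:1.7}(vi-a) (with $b=0$, $s^*=1$) we conclude $\varphi=e^{i\theta}\varphi_{\omega,2\sqrt{\omega}}(\cdot-y)$, i.e.\ the rescaled profiles converge strongly, up to symmetries, to a gauge-transformed algebraic soliton.

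Finally I would convert subsequential convergence into the stated limit \eqref{eq:1.24}. Strong convergence of $(w_n)$ to the soliton orbit, valid for every $t_n\uparrow T^*$, yields convergence of $v(t)$ itself modulo the scaling $\lambda(t)$ fixed by $\|\del_x\varphi_{1,2}\|_{L^2}=\lambda(t)\|\del_x v(t)\|_{L^2}$ and modulation parameters $\theta(t),y(t)$ produced by the implicit function theorem near the soliton manifold; undoing the gauge transform returns \eqref{eq:1.24}. I expect the main obstacle to be the compactness step: constructing a profile decomposition that controls the derivative nonlinearity and the gauge factor, and, above all, establishing the asymptotic orthogonality of $\cE$ and $\cP$ across profiles so that the sharp nonexistence statement in Theorem \ref{thm:1.7}(vi-a) can be applied to exclude both vanishing and dichotomy and thereby prevent any mass from escaping.
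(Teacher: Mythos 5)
Your setup coincides with the paper's: rescale by $\lambda(t_n)$, use the homogeneities $E(u_\lambda)=\lambda^2E(u)$, $P(u_\lambda)=\lambda P(u)$, $M(u_\lambda)=M(u)$ to get $E,P\to0$ along the rescaled sequence while the mass stays pinned at $4\pi$. (The paper works directly with \eqref{DNLS} and the functionals $S_{\omega,c}$, $K_{\omega,c}$; your detour through $v=\cG_{1/4}(u)$ is harmless but unnecessary, since for $b=0$ the Nehari characterization is available for the Hamiltonian form itself, cf.\ Section \ref{sec:6}.) The genuine gap is in your compactness step. First, the claim you lean on to exclude dichotomy --- that the variational theory ``forbids any profile of mass $\le M^*$ from carrying data with $\cE<0$'' --- is false: Theorem \ref{thm:1.7}(vi) constrains only functions of mass exactly $M^*$, and Remark \ref{rem:1.8} together with the footnote to assertion (iv) states explicitly that $\cE(\varphi)<0$ is possible whenever $\cM(\varphi)>2\pi/\sqrt{\gamma}$, so a bubble of mass in $(2\pi,4\pi)$ may well have negative energy and your per-bubble sign constraint does not hold. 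Second, you yourself flag the asymptotic additivity of $\cE$ and $\cP$ across profiles as the ``main obstacle'' without resolving it, so the proposal does not close.

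The idea you are missing is that no profile decomposition is needed. Since $\|\del_x u_{\lambda(t_n)}(t_n)\|_{L^2}=\|\del_x\phi_{1,2}\|_{L^2}$ by the choice of $\lambda$, and since $K_{1,2}(\phi_{1,2})=0$ together with $E(\phi_{1,2})=P(\phi_{1,2})=0$ gives $\|\del_x\phi_{1,2}\|_{L^2}^2=M(\phi_{1,2})=4\pi$, the identities $S_{1,2}=E+\tfrac12M+P$ and $K_{1,2}(\varphi)=-\|\del_x\varphi\|_{L^2}^2+4E(\varphi)+M(\varphi)+2P(\varphi)$ show at once that the rescaled sequence satisfies $S_{1,2}\to\tfrac12\cdot4\pi=d(1,2)$ and $K_{1,2}\to0$: it is precisely a minimizing sequence for the Nehari problem at $(\omega,c)=(1,2)$. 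Proposition \ref{prop:6.2} --- the concentration--compactness result already established via the Brezis--Lieb lemma and the nonnegative, weakly lower semicontinuous functional $\cI_{\omega,c}$, with no bubbling analysis --- then gives strong convergence in $X_{1,2}$ to the orbit of $\phi_{1,2}$ along a subsequence; mass conservation upgrades the weak $L^2$ convergence to strong convergence, hence $H^1$ convergence, and running the argument for an arbitrary sequence $t_n\to T^*$ yields \eqref{eq:1.24}.
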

This result is analogous to the one obtained by Weinstein \cite{W86} for \eqref{NLS}.
The similar result of Theorem \ref{thm:1.12} was first obtained in \cite{KW18}.  
Here we give a simple alternative proof by using characterization on the Nehari manifold which is related to concentration compactness arguments in \cite{W86}. 

\subsection{Organization of the paper}
The rest of this paper is organized as follows. In Section \ref{sec:2} we study the solitons of the equation \eqref{eq:1.1} and calculate the conserved quantities of them. In Section \ref{sec:3} we review the gauge transformation and the local well-posedness theory in the energy space. In Section \ref{sec:4} we give a variational characterization of two types of solitons in a unified way. In Section \ref{sec:5} we establish potential well theory for \eqref{ME} by applying the variational characterization, and give a proof of Theorem \ref{thm:1.7}. In Section \ref{sec:6} we organize 
potential well theory for \eqref{DNLS}, and give a proof of Theorem \ref{thm:1.12}. 
\section{Solitons and conserved quantities}
\label{sec:2}
\subsection{A two-parameter family of solitons}
\label{sec:2.1}
Here we formulate the solitons of (\ref{eq:1.1}) following \cite{CO06}.
Consider solutions of (\ref{eq:1.1}) of the form
\begin{align}
\label{eq:2.1}
u_{\omega,c}(t,x)=e^{i\omega t} \phi_{\omega,c}(x-ct)
\end{align}
for $(\omega,c) \in\R^2$, and assume that $\phi_{\omega ,c} \in H^1(\R)$. It is clear that $\phi_{\omega ,c}$ must satisfy the following equation:
\begin{align}
\label{eq:2.2}
-\phi ''+ \omega \phi +ic \phi ' - i|\phi|^{2} \phi '-b|\phi |^4\phi =0, \quad x \in\R.
\end{align}
We note that the equation (\ref{eq:2.2}) can be rewritten as $S_{\omega ,c}' (\phi) =0$, where
\begin{align}
\label{eq:2.3}
S_{\omega ,c}(\phi ):=E(\phi )+\frac{\omega}{2} M(\phi )+\frac{c}{2}P(\phi ).
\end{align}
Applying the following gauge transformation to $\phi_{\omega ,c}$
\begin{align}
\label{eq:2.4}
\phi_{\omega,c}(x) 
&= \Phi_{\omega,c}(x) \exp\l( i\frac{c}{2}x - \frac{i}{4} \int_{-\infty}^{x} \l|\Phi_{\omega,c}(y)\r|^2 dy \r),
\end{align}
then $\Phi_{\omega ,c}$ satisfies the following equation:
\begin{align}
\label{eq:2.5}
- \Phi ''+ \l(\omega- \frac{c^2}{4}\r) \Phi +\frac{1}{2}\im\l( \overline{\Phi}\Phi'\r) \Phi+ \frac{c}{2} |\Phi|^{2} \Phi - \frac{3}{16}\gamma |\Phi|^{4}\Phi =0, \quad x\in \R,
\end{align}
where $\gamma = 1+\frac{16}{3}b$. From $\Phi_{\omega ,c}\in H^1(\R)$ and the equation \eqref{eq:2.5}, one can show that $\im\l( \overline{\Phi_{\omega ,c}}\Phi_{\omega ,c}'\r)=0$ (see \cite[Lemma 2]{CO06}). Therefore $\Phi_{\omega ,c}$ satisfies the following elliptic equation with double power nonlinearity:
\begin{align}
\label{eq:2.6}
- \Phi ''+ \l(\omega- \frac{c^2}{4}\r) \Phi + \frac{c}{2} |\Phi|^{2} \Phi - \frac{3}{16}\gamma |\Phi|^{4}\Phi =0, \quad x\in \R.
\end{align}
The positive radial (even) solution of (\ref{eq:2.6}) is explicitly obtained as follows; 
if $\gamma >0$, 
\begin{align}
\label{eq:2.7}
\Phi_{\omega,c}^2(x) &=
\l\{
\begin{array}{ll}
\thickmuskip=0mu 
\medmuskip=1mu 
\thinmuskip=0mu 
\ds
 \frac{ 2(4\omega - c^2) }{\sqrt{c^2+\gamma (4\omega -c^2)} \cosh ( \sqrt{4\omega- c^2}x)-c }
& 
\thickmuskip=2mu 
\ds \text{if}~-2\sqrt{\omega}<c<2\sqrt{\omega},
\\
& 
\\ 
\medmuskip=1mu
\ds
\frac{4c}{(cx)^2+\gamma} 
& 
\thickmuskip=3.2mu 
\ds \text{if}~c=2\sqrt{\omega},
\end{array}
\r.
\end{align}
or if $\gamma \leq 0$,
\begin{align}
\label{eq:2.8}
\Phi_{\omega,c}^2(x) &=
\begin{array}{ll}\ds
\medmuskip=1mu 
\thinmuskip=0mu 
\frac{ 2(4\omega - c^2) }{\sqrt{c^2+\gamma (4\omega -c^2)} \cosh ( \sqrt{4\omega- c^2}x)-c } 
& 
\thickmuskip=2mu 
\medmuskip=1mu 
\ds \text{if}~-2\sqrt{\omega} <c<-2s_{\ast}\sqrt{\omega}, 
\end{array}
\end{align}
where $s_{\ast} =s_{\ast}(\gamma)=\sqrt{ -\gamma /(1-\gamma)}$. We note that the condition
\begin{align} 
\label{eq:2.9}
\begin{array}{ll}
\ds\text{if}~ \gamma >0\Leftrightarrow b>-3/16,& \ds -2\sqrt{\omega} <c\leq 2\sqrt{\omega} ,\\[7pt]
\ds\text{if}~ \gamma \leq 0\Leftrightarrow b\leq -3/16,& \ds -2\sqrt{\omega} <c<-2s_{\ast}\sqrt{\omega} 
\end{array}
\end{align}
is a necessary and sufficient condition for the existence of non-trivial solutions of (\ref{eq:2.6}) vanishing at infinity; see \cite{BeL83}. 
From (\ref{eq:2.1}), (\ref{eq:2.4}), (\ref{eq:2.7}) and (\ref{eq:2.8}), we obtain the following explicit formulae of solitons:
\begin{align}
\label{eq:2.10}
u_{\omega ,c}(t,x)=e^{i\omega t +i\frac{c}{2}(x-ct)-\frac{i}{4}\int_{-\infty}^{x-ct}|\Phi_{\omega ,c}(y)|^2 dy}\Phi_{\omega ,c}(x-ct).
\end{align} 

\subsection{Mass of the solitons} 
\label{sec:2.2}%
In this subsection we calculate the mass of the solitons. First we prepare the following elementary integration formulae:
\begin{lemma}
\label{lem:2.1}
Let $-1<\alpha $. Then we have
\begin{align}
\label{eq:2.11}
\int_{-\infty}^{\infty} \frac{dy}{\cosh y+\alpha } =\l\{
\begin{array}{lll}
\ds \frac{4}{ \sqrt{1-\alpha^2} }\tan^{-1} \sqrt{ \frac{1-\alpha}{1+\alpha} } &\ds \text{\rm if} &|\alpha |<1,\\[8pt]
\ds \quad 2 &\ds \text{\rm if} &\alpha =1, \\[2pt]
\ds \frac{2}{ \sqrt{\alpha^2 -1} } \log \l( \alpha +\sqrt{\alpha^2 -1}\r) &\ds \text{\rm if} &\alpha >1.
\end{array}
\r.
\end{align}
\end{lemma}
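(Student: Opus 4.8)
The plan is to evaluate the integral $\int_{-\infty}^{\infty}(\cosh y+\alpha)^{-1}\,dy$ by a standard rational substitution, splitting into the three cases according to the sign of $1-\alpha^2$. First I would apply the Weierstrass (tangent half-angle) substitution $t=\tanh(y/2)$, under which $\cosh y=(1+t^2)/(1-t^2)$ and $dy=2\,dt/(1-t^2)$, with $t$ ranging over $(-1,1)$ as $y$ ranges over $\R$. A direct computation then gives
\begin{align*}
\cosh y+\alpha = \frac{(1+\alpha)+(1-\alpha)t^2}{1-t^2},
\end{align*}
so that the integrand collapses to a genuinely rational form:
\begin{align*}
\int_{-\infty}^{\infty}\frac{dy}{\cosh y+\alpha}
=\int_{-1}^{1}\frac{2\,dt}{(1+\alpha)+(1-\alpha)t^2}.
\end{align*}
The hypothesis $\alpha>-1$ guarantees $1+\alpha>0$, so the denominator never vanishes on $(-1,1)$ in the cases where $1-\alpha\ge 0$, and the integral is well defined.

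Next I would split into the three cases by the sign of the coefficient $1-\alpha$ (equivalently $1-\alpha^2$, since $1+\alpha>0$). When $|\alpha|<1$ both $1+\alpha$ and $1-\alpha$ are positive, and the integral is of the form $\int dt/(A+Bt^2)$ with $A,B>0$, which evaluates via $\arctan$. Writing the antiderivative as $\frac{2}{\sqrt{(1+\alpha)(1-\alpha)}}\tan^{-1}\!\bigl(t\sqrt{(1-\alpha)/(1+\alpha)}\bigr)$ and using that the integrand is even so the value is twice the integral over $(0,1)$, one reads off the bound $t=1$ to get $\frac{4}{\sqrt{1-\alpha^2}}\tan^{-1}\sqrt{(1-\alpha)/(1+\alpha)}$. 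When $\alpha=1$ the $t^2$ term drops out entirely, the integrand becomes the constant $2/(1+\alpha)=1$ over $(-1,1)$ of length $2$, giving the value $2$ immediately. When $\alpha>1$ the coefficient $1-\alpha$ is negative, so the integral is of the form $\int dt/(A-|B|t^2)$ and evaluates via the logarithm (inverse hyperbolic tangent); after simplification using $\tanh^{-1}z=\frac12\log\frac{1+z}{1-z}$ and rationalizing, one obtains $\frac{2}{\sqrt{\alpha^2-1}}\log\bigl(\alpha+\sqrt{\alpha^2-1}\bigr)$.

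There is no serious obstacle here; the only points requiring mild care are matching the closed-form antiderivatives to the exact constants stated (particularly confirming that the $\arctan$ and $\log$ expressions evaluated at $t=1$ simplify to the precise right-hand sides, where the identity $\tan^{-1}\sqrt{(1-\alpha)/(1+\alpha)}$ appears rather than a half-angle and the logarithmic argument $\alpha+\sqrt{\alpha^2-1}$ arises from $\frac{\sqrt{\alpha+1}+\sqrt{\alpha-1}}{\sqrt{\alpha+1}-\sqrt{\alpha-1}}$ after rationalization). I would also verify continuity of the three expressions across the boundary $\alpha=1$ as a consistency check: both the $|\alpha|<1$ and $\alpha>1$ formulas tend to $2$ as $\alpha\to 1$, which confirms the constants are correctly normalized. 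This is the kind of verification that catches a stray factor of $2$, and it is the only place I would slow down.
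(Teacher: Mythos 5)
Your proof is correct, and it differs from the paper's in that the paper offers no derivation at all: it simply cites formula 3.513.2 of Gradshteyn--Ryzhik. Your tangent half-angle substitution $t=\tanh(y/2)$ is the standard way to prove that table entry, and every step checks out: the reduction to $\int_{-1}^{1}2\,dt/\bigl((1+\alpha)+(1-\alpha)t^2\bigr)$ is right, the arctangent antiderivative in the case $|\alpha|<1$ evaluates to exactly the stated constant, the case $\alpha=1$ is trivial as you say, and in the case $\alpha>1$ the denominator $(1+\alpha)-(\alpha-1)t^2\geq 2$ on $[-1,1]$ so there is no singularity, and the rationalization $\bigl(\sqrt{\alpha+1}+\sqrt{\alpha-1}\bigr)/\bigl(\sqrt{\alpha+1}-\sqrt{\alpha-1}\bigr)=\alpha+\sqrt{\alpha^2-1}$ gives the stated logarithm. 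The continuity check at $\alpha=1$ is a nice touch and does confirm the normalization. What your approach buys is a self-contained verification that does not require the reader to trust a table; what the paper's citation buys is brevity for a purely computational lemma. Either is acceptable here.
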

\begin{proof}
See the formula 3.513, 2 in \cite{GR07}.
\end{proof}
By using Lemma \ref{lem:2.1}, we have the following proposition.
\begin{proposition}
\label{prop:2.2}
Let $(\omega ,c)$ and $\gamma$ satisfy \eqref{eq:2.9}. Then the following properties hold\textup{:}
\begin{enumerate}[\rm (i)]

\item When $\gamma >0$, we have
\begin{align}
\label{eq:2.12}
M\l( \phi_{\omega ,c} \r) =
\l\{ 
\begin{array}{ll}
\ds \frac{8}{\sqrt{\gamma}} \tan^{-1} \sqrt{ \frac{1+\beta}{1-\beta} } &\ds \text{\rm if}~-2\sqrt{\omega} <c<2\sqrt{\omega}, \\[10pt]
\ds \frac{4\pi}{\sqrt{\gamma}} &\ds\text{\rm if}~c=2\sqrt{\omega},
\end{array}
\r.
\end{align}
where $\beta$ is defined by
\begin{align}
\label{eq:2.13}
\beta =\beta(\omega ,c):= \frac{c}{ \sqrt{c^2+\gamma (4\omega -c^2)} }.
\end{align}

\item When $\gamma =0$, we have
\begin{align}
\label{eq:2.14}
M\l( \phi_{\omega ,c} \r) =\frac{4\sqrt{4\omega -c^2}}{-c}\quad \text{\rm if}~-2\sqrt{\omega} <c<0.
\end{align}

\item When $\gamma <0$, we have
\begin{align}
\label{eq:2.15}
M\l( \phi_{\omega ,c} \r) =\frac{4}{\sqrt{-\gamma}} \log \l( \alpha +\sqrt{\alpha^2 -1}\r) 
\quad\text{\rm if}~-2\sqrt{\omega} <c<-2s_{\ast}\sqrt{\omega},
\end{align}
where $\alpha$ is defined by
\begin{align}
\label{eq:2.16}
\alpha =\alpha (\omega ,c):= \frac{-c}{ \sqrt{c^2+\gamma (4\omega -c^2)} }.
\end{align}
\end{enumerate}
Moreover, if $\gamma >0$, the function
\begin{align*}
(-1, 1] \ni s \mapsto M\l(\phi_{1,2s} \r) \in \l( 0,  \frac{4\pi}{ \sqrt{\gamma} }\r]
\end{align*}
is continuous, strictly increasing and surjective. Similarly, if $\gamma\leq 0$ the function
\begin{align*}
\l( -1, -s_{\ast} \r) \ni s \mapsto M\l(\phi_{1,2s}\r) \in ( 0,  \infty )
\end{align*}
has the same property.
\end{proposition}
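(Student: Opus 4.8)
The plan is to reduce the monotonicity and surjectivity to the behaviour of the auxiliary quantities $\beta$ and $\alpha$ from \eqref{eq:2.13} and \eqref{eq:2.16}, evaluated along $\omega=1$, $c=2s$, and then composed with a single explicit increasing function. Throughout set $h(s):=\gamma(1-s^2)+s^2$, so that $c^2+\gamma(4\omega-c^2)=4h(s)$ when $\omega=1$, $c=2s$; continuity in each case is then clear from the explicit formulae, since the denominators never vanish on the relevant intervals.

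First I would treat $\gamma>0$. Since $h(s)=\gamma(1-s^2)+s^2>0$ for $s\in(-1,1]$, the function $\beta(s)=s/\sqrt{h(s)}$ is smooth, and a direct differentiation collapses to the clean identity
\[
\beta'(s)=\gamma\,h(s)^{-3/2}>0 ,
\]
so $\beta$ is strictly increasing with $\beta(s)\to-1$ as $s\to-1^+$ and $\beta(1)=1$; hence $\beta$ maps $(-1,1]$ onto $(-1,1]$. As $\tan^{-1}\sqrt{(1+\beta)/(1-\beta)}$ is strictly increasing in $\beta\in(-1,1)$, tending to $0$ as $\beta\to-1^+$ and to $\pi/2$ as $\beta\to1^-$, the composition $M(\phi_{1,2s})=\frac{8}{\sqrt{\gamma}}\tan^{-1}\sqrt{(1+\beta(s))/(1-\beta(s))}$ is strictly increasing, tends to $0$ as $s\to-1^+$, and joins the value $\frac{4\pi}{\sqrt{\gamma}}$ at $s=1$ (the two branches of \eqref{eq:2.12} agree in the limit precisely because $\tan^{-1}\sqrt{(1+\beta)/(1-\beta)}\to\pi/2$). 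Surjectivity onto $(0,4\pi/\sqrt{\gamma}]$ then follows from the intermediate value theorem.

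Next I would treat $\gamma\le0$. For $\gamma=0$, formula \eqref{eq:2.14} gives $M(\phi_{1,2s})=4\sqrt{1-s^2}/(-s)$ on $(-1,0)$, which the substitution $t=-s\in(0,1)$ shows to be strictly increasing with limits $0$ as $s\to-1^+$ and $\infty$ as $s\to0^-$. For $\gamma<0$ the constraint $s\in(-1,-s_*)$ is exactly what guarantees the positivity conditions: $s^2>s_*^2=-\gamma/(1-\gamma)$ forces $h(s)>0$, and $s^2<1$ forces $\alpha(s)=-s/\sqrt{h(s)}>1$. The same differentiation as before yields
\[
\alpha'(s)=-\gamma\,h(s)^{-3/2}>0 ,
\]
and since $\log(\alpha+\sqrt{\alpha^2-1})$ is strictly increasing for $\alpha>1$, the mass $M(\phi_{1,2s})=\frac{4}{\sqrt{-\gamma}}\log(\alpha(s)+\sqrt{\alpha(s)^2-1})$ is strictly increasing. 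Finally $\alpha(s)\to1^+$ as $s\to-1^+$ (so $M\to0$) and $\alpha(s)\to\infty$ as $s\to-s_*^-$ (so $M\to\infty$) give surjectivity onto $(0,\infty)$.

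The computation is essentially routine, and the whole argument is driven by the two sign-definite identities $\beta'=\gamma h^{-3/2}$ and $\alpha'=-\gamma h^{-3/2}$, whose constant sign delivers monotonicity in all cases simultaneously. I expect the only genuinely delicate point to be the bookkeeping of the positivity conditions under the radicals on each interval---in particular $h>0$ and $\alpha>1$ when $\gamma<0$, which is exactly where the existence constraint $s_*<|s|<1$ enters---together with verifying that the endpoint limits match so that continuity and surjectivity hold on the closed/half-open ranges claimed.
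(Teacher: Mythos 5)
Your treatment of the ``Moreover'' part is correct and follows essentially the same route as the paper: there too, everything is reduced to the monotonicity of $s\mapsto\beta(s)$ (resp.\ $s\mapsto\alpha(s)$) along the scaling curve, composed with the increasing functions $\beta\mapsto\tan^{-1}\sqrt{(1+\beta)/(1-\beta)}$ and $\alpha\mapsto\log\bigl(\alpha+\sqrt{\alpha^2-1}\bigr)$. The only cosmetic difference is that the paper gets monotonicity of $\beta$ by rewriting $\beta(s)=\sgn (s)/\sqrt{1+\gamma(s^{-2}-1)}$ rather than via your identity $\beta'=\gamma h^{-3/2}$; both work, and your bookkeeping of the positivity conditions ($h>0$ and $\alpha>1$ on $(-1,-s_*)$ when $\gamma<0$), the endpoint limits, and the matching of the two branches of \eqref{eq:2.12} at $s=1$ are all handled correctly.

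There is, however, a genuine gap: the proposition also asserts the explicit formulae (i)--(iii) for $M(\phi_{\omega,c})$, and your argument simply takes these as given (``formula \eqref{eq:2.14} gives\,\dots''). Deriving them is the bulk of the paper's proof. One must first note that the gauge transformation \eqref{eq:2.4} preserves the modulus, so $M(\phi_{\omega,c})=M(\Phi_{\omega,c})$; then substitute the explicit profiles \eqref{eq:2.7}--\eqref{eq:2.8}, change variables $y=\sqrt{4\omega-c^2}\,x$ to reduce the mass to a multiple of $\int_{\R}\frac{dy}{\cosh y+\alpha}$ with $\alpha$ as in \eqref{eq:2.16}, and apply the integration formula of Lemma \ref{lem:2.1} together with the identity $1-\alpha^2=\gamma(4\omega-c^2)/(c^2+\gamma(4\omega-c^2))$ to obtain \eqref{eq:2.12}, \eqref{eq:2.14}, \eqref{eq:2.15}; the algebraic-soliton case $c=2\sqrt{\omega}$ is a separate direct computation $\int_{\R}4c/(c^2x^2+\gamma)\,dx=4\pi/\sqrt{\gamma}$. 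Without this step, items (i)--(iii) remain unproven and the monotonicity argument has no formulae to act on.
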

\begin{proof}
Let $(\omega ,c)$ and $\gamma$ satisfy \eqref{eq:2.9}. When $\omega >c^2/4$, from the explicit formulae of the solitons, we have
\begin{align}
\label{eq:2.17} 
M\l( \phi_{\omega ,c} \r) =M\l(\Phi_{\omega ,c} \r)&= \int_{-\infty}^{\infty} 
\frac{  2(4\omega -c^2)dx }{\sqrt{c^2+\gamma (4\omega -c^2)} \cosh ( \sqrt{4\omega- c^2}x)-c } \\
&= \frac{ 2\sqrt{4\omega -c^2} }{ \sqrt{c^2+\gamma (4\omega -c^2)} } \int_{-\infty}^{\infty} \frac{dy}{ \cosh y +\alpha },
\notag
\end{align} 
where $\alpha$ is defined by (\ref{eq:2.16}). 

Case 1-1: $\gamma >0$ and $-2\sqrt{\omega} <c<2\sqrt{\omega}$. In this case we note that $|\alpha| <1$ and
\begin{align}
\label{eq:2.18}
1-\alpha^2 &= 1-\frac{ c^2 }{ c^2+\gamma (4\omega -c^2) } =\frac{ \gamma (4\omega -c^2) }{ c^2+\gamma (4\omega -c^2) }.
\end{align}
Applying Lemma~\ref{lem:2.1} to (\ref{eq:2.17}), we have 
\begin{align}
\label{eq:2.19}
M\l( \phi_{\omega ,c}\r) 
&= \frac{ 2\sqrt{4\omega -c^2} }{ \sqrt{c^2+\gamma (4\omega -c^2)} } \cdot \frac{4}{\sqrt{1-\alpha^2}} \tan^{-1} 
\sqrt{ \frac{1-\alpha}{1+\alpha} } \\
&= \frac{8}{ \sqrt{\gamma} }\tan^{-1} \sqrt{ \frac{1+\beta}{1-\beta} },
\notag
\end{align}
where $\beta:=-\alpha$. We note that the function $(\omega ,c)\mapsto \beta(\omega ,c)$ is constant on the scaling curve \eqref{eq:1.11}. For $s\in (-1,1]$ we have
\begin{align*}
\beta (s):=\beta (\omega ,2s\sqrt{\omega} ) 
&=\frac{s}{ \sqrt{s^2+\gamma (1-s^2)} }= \frac{ \sgn~s}{ \sqrt{1+\gamma \l( \frac{1}{s^2} -1\r)} }.
\end{align*}
This shows that the function
\begin{align}
\label{eq:2.20}
(-1,1] \ni s \mapsto \beta(s) \in (-1,1]
\end{align}
is continuous, strictly increasing and surjective. 
Therefore, by (\ref{eq:2.19}) we obtain that the function
\begin{align*}
(-1, 1) \ni s \mapsto M\l( \phi_{1,2s} \r) \in \l( 0,  \frac{4\pi}{ \sqrt{\gamma} }\r)
\end{align*}
has the same property. We also note that 
\begin{align}
\label{eq:2.21}
\lim_{s\to 1-0} M\l( \phi_{1,2s} \r) = \frac{4\pi}{ \sqrt{\gamma} }. 
\end{align}

Case 1-2: $\gamma >0$ and $c=2\sqrt{\omega}$. From the explicit formulae of algebraic solitons, we have
\begin{align}
\label{eq:2.22}
M\l(\phi_{c^2/4 ,c}\r) =M\l(\Phi_{c^2/4 ,c}\r) &= \int_{-\infty}^{\infty} \frac{4c}{c^2x^2 +\gamma} dx =\frac{4\pi}{ \sqrt{\gamma} }.
\end{align}
From (\ref{eq:2.21}) and (\ref{eq:2.22}), we obtain that
\begin{align}
\label{eq:2.23}
\lim_{s\to 1-0} M\l( \phi_{1,2s} \r) = M\l( \phi_{1,2}\r),
\end{align}
which completes the proof of the case $\gamma >0$.

Case 2: $\gamma =0$ and $-2\sqrt{\omega} <c<0$. In this case we note that $\alpha =1$. From (\ref{eq:2.17}) and Lemma~\ref{lem:2.1}, we have
\begin{align}
\label{eq:2.24}
M\l( \phi_{\omega ,c}\r) &= \frac{ 2\sqrt{4\omega -c^2} }{ -c } \int_{-\infty}^{\infty} \frac{dy}{ \cosh y +1 } 
=\frac{4\sqrt{4\omega -c^2}}{-c}.
\end{align}
For $s\in (-1,0)$, we have
\begin{align*}
M\l( \phi_{1,2s} \r) 
&=\frac{4\sqrt{1-s^2}}{-s}=4\sqrt{\frac{1}{s^2}-1},
\end{align*}
which yields that the function
\begin{align}
\label{eq:2.25}
(-1, 0) \ni s \mapsto M\l( \phi_{1,2s} \r)  \in (0,\infty )
\end{align}
is continuous, strictly increasing and surjective. 

Case 3: $\gamma <0$ and $-2\sqrt{\omega} <c<-2s_{\ast}\sqrt{\omega}$. In this case we note that $\alpha >1$. From Lemma~\ref{lem:2.1}, (\ref{eq:2.17}) and (\ref{eq:2.18}), we have
\begin{align}
\label{eq:2.26}
M\l( \phi_{\omega ,c}\r) 
&= \frac{ 2\sqrt{4\omega -c^2} }{ \sqrt{c^2+\gamma (4\omega -c^2)} } \int_{-\infty}^{\infty} \frac{dy}{ \cosh y +\alpha } \\
&= \frac{ 2\sqrt{4\omega -c^2} }{ \sqrt{c^2+\gamma (4\omega -c^2)} } \cdot \frac{2}{\sqrt{\alpha^2 -1}} \log \l( \alpha +\sqrt{\alpha^2 -1}\r)
\notag\\
&= \frac{4}{\sqrt{-\gamma}} \log \l( \alpha +\sqrt{\alpha^2 -1}\r).
\notag
\end{align}
We note that
\begin{align}
\label{eq:2.27}
\alpha (s) :=\alpha (\omega ,2s\sqrt{\omega} ) 
&= \frac{-s}{ \sqrt{(1-\gamma )s^2 +\gamma} }= \frac{1}{ \sqrt{1-\gamma +\gamma s^{-2} } }.
\end{align}
This yields that the function
\begin{align*}
\l( -1,  -s_{\ast} \r) \ni s \mapsto \alpha (s) \in (1, \infty )
\end{align*}
is continuous, strictly increasing and surjective. From the formula (\ref{eq:2.26}), we deduce that the function
\begin{align}
\label{eq:2.28}
\l( -1,  -s_{\ast}\r) \ni 
s \mapsto M\l( \phi_{1,2s} \r) \in (0,\infty )
\end{align}
has the same property. This completes the proof.
\end{proof}
\subsection{Momentum of the solitons}%
\label{sec:2.3}
In this subsection we calculate the momentum of the solitons. From the formula (\ref{eq:2.4}) of the solitons, we have
\begin{align}
\label{eq:2.29}
P(\phi _{\omega ,c})&=\re \int_{\R} i\phi_{\omega ,c}' \overline{\phi_{\omega ,c}} dx
=-\frac{c}{2}M(\Phi_{\omega ,c}) +\frac{1}{4} \| \Phi_{\omega ,c}\|_{L^4}^4.
\end{align}
To calculate the $L^4$-norm, we prepare the following elementary integration formulae.
\begin{lemma}
\label{lem:2.3}
Let $-1<\alpha $. Then we have
\begin{align}
\label{eq:2.30}
\thickmuskip=0mu 
\medmuskip=0mu 
\thinmuskip=0mu 
\int_{-\infty}^{\infty} \frac{dy}{(\cosh y+\alpha )^2} =\l\{
\begin{array}{ll}
\thickmuskip=0mu 
\medmuskip=0mu 
\thinmuskip=0mu 
\ds \frac{2}{1-\alpha^2} -\frac{4\alpha}{ (1-\alpha^2 )^{3/2} }\tan^{-1} \sqrt{ \frac{1-\alpha}{1+\alpha} } 
&\text{\rm if}~~|\alpha |<1,\\[10pt]
\ds \quad \frac{2}{3} &\text{\rm if}~~\alpha =1, \\[10pt]
\thickmuskip=0mu 
\medmuskip=0mu 
\thinmuskip=0mu 
\ds -\frac{2}{\alpha^2 -1}+\frac{2\alpha}{ (\alpha^2 -1)^{3/2} } \log \l( \alpha +\sqrt{\alpha^2 -1}\r) 
&\text{\rm if}~\alpha >1.
\end{array}
\r.
\end{align}
\end{lemma}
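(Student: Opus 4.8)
The plan is to reduce everything to Lemma~\ref{lem:2.1} by differentiating under the integral sign in the parameter $\alpha$. Writing $I(\alpha) := \int_{-\infty}^{\infty} \frac{dy}{\cosh y + \alpha}$ for the integral evaluated in \eqref{eq:2.11}, the pointwise identity $\partial_\alpha (\cosh y + \alpha)^{-1} = -(\cosh y + \alpha)^{-2}$ gives, once the interchange is justified,
\[
\int_{-\infty}^{\infty} \frac{dy}{(\cosh y + \alpha)^2} = -I'(\alpha).
\]
Thus it suffices to differentiate each of the three closed-form expressions in \eqref{eq:2.11} with respect to $\alpha$ and change sign.

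For $|\alpha| < 1$, I write $I(\alpha) = 4(1-\alpha^2)^{-1/2} g(\alpha)$ with $g(\alpha) = \tan^{-1}\sqrt{(1-\alpha)/(1+\alpha)}$. The product rule produces a term $4\alpha(1-\alpha^2)^{-3/2} g(\alpha)$ from the prefactor, while a direct chain-rule computation yields the clean identity $g'(\alpha) = -\tfrac{1}{2}(1-\alpha^2)^{-1/2}$, where one uses $1 + \tfrac{1-\alpha}{1+\alpha} = \tfrac{2}{1+\alpha}$ to cancel most of the factors. Combining and negating gives $\frac{2}{1-\alpha^2} - \frac{4\alpha}{(1-\alpha^2)^{3/2}} g(\alpha)$, which is the first line of \eqref{eq:2.30}. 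For $\alpha > 1$, I write $I(\alpha) = 2(\alpha^2-1)^{-1/2} h(\alpha)$ with $h(\alpha) = \log(\alpha + \sqrt{\alpha^2-1})$; using the standard identity $h'(\alpha) = (\alpha^2-1)^{-1/2}$ together with $\partial_\alpha (\alpha^2-1)^{-1/2} = -\alpha(\alpha^2-1)^{-3/2}$, one obtains the third line of \eqref{eq:2.30} after negation.

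The boundary case $\alpha = 1$ is handled most cleanly by direct evaluation rather than by passing to the limit (which would be a delicate $\infty - \infty$ cancellation): using $\cosh y + 1 = 2\cosh^2(y/2)$ the integral reduces to $\tfrac{1}{2}\int_{-\infty}^{\infty} \operatorname{sech}^4 t\,dt$, and the substitution $s = \tanh t$ turns this into $\tfrac{1}{2}\int_{-1}^{1}(1-s^2)\,ds = \tfrac{2}{3}$. The one point that genuinely requires care, and the main technical obstacle, is justifying the interchange of differentiation and integration. This is routine but should be recorded: on any compact subinterval $J$ of $(-1,1)$ or of $(1,\infty)$ one has $\inf_{\alpha\in J}(1+\alpha) > 0$, so the denominator $\cosh y + \alpha$ is bounded below by a positive constant and grows like $\cosh y$ as $|y|\to\infty$; hence both $(\cosh y + \alpha)^{-1}$ and $(\cosh y + \alpha)^{-2}$ are dominated, uniformly for $\alpha \in J$, by a fixed integrable function of $y$. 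Dominated convergence then licenses $\frac{d}{d\alpha}\int = \int \partial_\alpha$ on each region, and the stated formulas follow.
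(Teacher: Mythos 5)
Your proof is correct, but it takes a genuinely different route from the paper. The paper disposes of Lemma~\ref{lem:2.3} in one line: change variables $t=e^y$ and invoke formula 3.252.4 of Gradshteyn--Ryzhik, exactly parallel to how Lemma~\ref{lem:2.1} is proved by citing formula 3.513.2. You instead derive \eqref{eq:2.30} from \eqref{eq:2.11} by differentiating under the integral sign in $\alpha$, using $\partial_\alpha(\cosh y+\alpha)^{-1}=-(\cosh y+\alpha)^{-2}$; I checked your two derivative computations ($g'(\alpha)=-\tfrac12(1-\alpha^2)^{-1/2}$ via the cancellation $1+\tfrac{1-\alpha}{1+\alpha}=\tfrac{2}{1+\alpha}$ and $w(1+\alpha)=\sqrt{1-\alpha^2}$, and $h'(\alpha)=(\alpha^2-1)^{-1/2}$) and both lines of the resulting formula match \eqref{eq:2.30} after negation. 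Your domination by $(\cosh y+\alpha_0)^{-2}$ with $\alpha_0=\inf J>-1$ legitimately justifies the interchange on each compact subinterval, and your direct evaluation of the $\alpha=1$ case via $\cosh y+1=2\cosh^2(y/2)$ and $s=\tanh t$ cleanly sidesteps the $\infty-\infty$ limit. What your approach buys is self-containedness: Lemma~\ref{lem:2.3} becomes a corollary of Lemma~\ref{lem:2.1} rather than a second independent table lookup, and the reader can verify everything by elementary calculus. What the paper's approach buys is brevity and uniformity --- one substitution covers all three cases at once, at the cost of deferring the actual computation to the reference.
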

\begin{proof}
Change variables $t=e^y$ and apply the formula 3.252, 4 in \cite{GR07}. 
\end{proof}
By using Lemma \ref{lem:2.3}, we have the following proposition.
\begin{proposition}
\label{prop:2.4}
The momentum of the solitons is represented as follows\textup{;} if $\gamma >0$ and $-2\sqrt{\omega} <c\leq 2\sqrt{\omega}$, or if $\gamma <0$ and $-2\sqrt{\omega}<c<-2s_{\ast}\sqrt{\omega}$, we have
\begin{align}
\label{eq:2.31}
P(\phi_{\omega ,c}) = \frac{c}{2} \l( -1+ \frac{1}{\gamma} \r) M(\phi_{\omega ,c}) +\frac{2}{\gamma}\sqrt{4\omega -c^2}.
\end{align}
If $\gamma =0$ and $-2\sqrt{\omega}<c<0$, we have
\begin{align}
\label{eq:2.32}
P(\phi_{\omega ,c}) = - \frac{2\omega +c^2}{3c} M(\phi_{\omega ,c}) .
\end{align}
\end{proposition}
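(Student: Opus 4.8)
The plan is to compute the momentum directly from the representation formula \eqref{eq:2.29}, namely $P(\phi_{\omega,c}) = -\frac{c}{2}M(\Phi_{\omega,c}) + \frac{1}{4}\|\Phi_{\omega,c}\|_{L^4}^4$, where the mass $M(\Phi_{\omega,c})=M(\phi_{\omega,c})$ has already been computed in Proposition \ref{prop:2.2}. So the only genuinely new ingredient is the quartic integral $\|\Phi_{\omega,c}\|_{L^4}^4 = \int_{\R} \Phi_{\omega,c}^4\,dx$, for which I would use the explicit profile \eqref{eq:2.7}--\eqref{eq:2.8}. Squaring those formulae, the integrand is $(2(4\omega-c^2))^2 / (\sqrt{c^2+\gamma(4\omega-c^2)}\cosh(\sqrt{4\omega-c^2}\,x) - c)^2$, and after the substitution $y=\sqrt{4\omega-c^2}\,x$ this reduces to a constant multiple of $\int_{-\infty}^{\infty} dy/(\cosh y + \alpha)^2$ with $\alpha$ as in \eqref{eq:2.16}. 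This is exactly the integral evaluated in Lemma \ref{lem:2.3}, so the plan is to plug in the appropriate branch ($|\alpha|<1$, $\alpha=1$, or $\alpha>1$) according to the sign of $\gamma$.

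The key steps, in order, are as follows. First I would record $\|\Phi_{\omega,c}\|_{L^4}^4$ in each regime by applying Lemma \ref{lem:2.3}; for instance when $\gamma>0$ and $-2\sqrt{\omega}<c<2\sqrt{\omega}$ we get a prefactor $\frac{4(4\omega-c^2)^2}{(c^2+\gamma(4\omega-c^2))^{3/2}}\cdot\sqrt{4\omega-c^2}^{-1}$ times the bracketed expression $\frac{2}{1-\alpha^2}-\frac{4\alpha}{(1-\alpha^2)^{3/2}}\tan^{-1}\sqrt{\tfrac{1-\alpha}{1+\alpha}}$. Second, I would simplify using the identity $1-\alpha^2 = \gamma(4\omega-c^2)/(c^2+\gamma(4\omega-c^2))$ from \eqref{eq:2.18}, which cancels most of the algebraic clutter. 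Third — and this is the crucial observation — the $\tan^{-1}$ term appearing in $\|\Phi_{\omega,c}\|_{L^4}^4$ is precisely the same transcendental quantity that appears in the mass formula \eqref{eq:2.19} of Proposition \ref{prop:2.2}. So rather than leaving the answer in terms of $\tan^{-1}$, I would substitute $M(\phi_{\omega,c})$ back in, which is exactly what produces the clean closed form \eqref{eq:2.31}. Finally, combining with the $-\frac{c}{2}M$ term from \eqref{eq:2.29} and collecting the coefficient of $M$ should yield $\frac{c}{2}(-1+\frac1\gamma)M(\phi_{\omega,c}) + \frac{2}{\gamma}\sqrt{4\omega-c^2}$. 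The degenerate case $\gamma=0$ (where $\alpha=1$) is handled identically using the middle branch of Lemma \ref{lem:2.3}, giving the value $\frac{2}{3}$ for the normalized integral and leading to \eqref{eq:2.32}.

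I expect the main obstacle to be purely computational: carefully tracking the powers of $(c^2+\gamma(4\omega-c^2))$ and $(4\omega-c^2)$ through the prefactor and the three terms of Lemma \ref{lem:2.3}, and verifying that the non-$\tan^{-1}$ (rational) part of the bracket recombines into exactly $\frac{2}{\gamma}\sqrt{4\omega-c^2}$ after the substitution. The elegant feature that makes the statement tractable is that one never needs to independently evaluate the inverse-trigonometric contribution — it is absorbed into $M(\phi_{\omega,c})$, so the algebra reduces to matching rational coefficients. The $\gamma<0$ case with $\alpha>1$ works by the same mechanism, with $\log(\alpha+\sqrt{\alpha^2-1})$ playing the role of $\tan^{-1}$ and being reabsorbed via \eqref{eq:2.15}; since \eqref{eq:2.31} is claimed to hold verbatim for both $\gamma>0$ and $\gamma<0$, the two computations must produce the identical algebraic form, which serves as a useful consistency check.
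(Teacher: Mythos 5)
Your proposal is correct and follows essentially the same route as the paper's proof: starting from \eqref{eq:2.29}, evaluating $\| \Phi_{\omega ,c}\|_{L^4}^4$ via Lemma \ref{lem:2.3}, simplifying with \eqref{eq:2.18}, and reabsorbing the transcendental term into $M(\phi_{\omega ,c})$ so that only rational coefficients remain to be matched. The paper likewise treats the three branches of $\alpha$ separately and notes (as you do) that the $\gamma>0$ and $\gamma<0$ computations land on the identical formula \eqref{eq:2.31}.
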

\begin{remark}
\label{rem:2.5}
The momentum is represented by the same formula in the cases $\gamma >0$ and $\gamma<0$ although each mass is represented by the different functions. 
\end{remark}
\begin{proof}
We only consider the case $\omega >c^2/4$. The case $c=2\sqrt{\omega}$ is calculated more easily. We note that $\Phi_{\omega ,c}^2(x)$ is rewritten as
\begin{align*}
\Phi_{\omega ,c}^2(x) 
&= \frac{ 2(4\omega -c^2) }{ \sqrt{c^2+\gamma (4\omega -c^2)} }\cdot\frac{1}{ \cosh ( \sqrt{4\omega- c^2}x) +\alpha },
\end{align*}
where $\alpha$ is defined by (\ref{eq:2.16}). Then we have
\begin{align}
\label{eq:2.33}
\| \Phi_{\omega ,c}\|_{L^4}^4 
&= \frac{ 4(4\omega -c^2)^{3/2} }{ c^2+\gamma (4\omega -c^2) } \int_{-\infty}^{\infty} \frac{dy}{ (\cosh y +\alpha )^2 }.
\end{align}

Case 1: $\gamma >0$ and $-2\sqrt{\omega} <c<2\sqrt{\omega}$. In this case we note that $|\alpha| <1$. From Lemma \ref{lem:2.3}, (\ref{eq:2.18}) and (\ref{eq:2.12}), we obtain that
\begin{align}
\label{eq:2.34}
\| \Phi_{\omega ,c}\|_{L^4}^4 
&= \frac{ 4(4\omega -c^2)^{3/2} }{ c^2+\gamma (4\omega -c^2) }\cdot
\l[  \frac{2}{1-\alpha^2} -\frac{4\alpha}{ (1-\alpha^2 )^{3/2} }\tan^{-1} \sqrt{ \frac{1-\alpha}{1+\alpha} }\r] 
\\
 &=\frac{8}{\gamma} \sqrt{4\omega -c^2} +\frac{16c}{\gamma^{3/2}} \tan^{-1} \sqrt{ \frac{1+\beta}{1-\beta} } 
\notag\\
&=\frac{8}{\gamma} \sqrt{4\omega -c^2} +\frac{2c}{\gamma} M(\Phi_{\omega ,c}).
\notag
\end{align}
From (\ref{eq:2.29}) and (\ref{eq:2.34}), we have
\begin{align*}
P(\phi_{\omega ,c}) &=-\frac{c}{2}M(\Phi_{\omega ,c}) +\frac{1}{4} \| \Phi_{\omega ,c}\|_{L^4}^4 \\
&=\frac{c}{2} \l( -1+ \frac{1}{\gamma} \r) M(\Phi_{\omega ,c}) +
\frac{2}{\gamma}\sqrt{4\omega -c^2}.
\end{align*}

Case 2: $\gamma <0$ and $-2\sqrt{\omega} <c<0$. In this case we note that $\alpha =1$. By Lemma \ref{lem:2.3} and (\ref{eq:2.14}), we obtain that
\begin{align}
\label{eq:2.35}
\| \Phi_{\omega ,c}\|_{L^4}^4 
&= \frac{ 4(4\omega -c^2)^{3/2} }{ c^2} \int_{-\infty}^{\infty} \frac{dy}{ (\cosh y +1 )^2 } 
\\
&= -\frac{2(4\omega -c^2)}{3c} M(\Phi_{\omega ,c}).
\notag
\end{align}
From (\ref{eq:2.29}) and (\ref{eq:2.35}), we have
\begin{align*}
P(\phi_{\omega ,c}) &=-\frac{c}{2} M(\Phi_{\omega ,c}) +\frac{1}{4} \| \Phi_{\omega ,c}\|_{L^4}^4\\
&=- \frac{2\omega +c^2}{3c} M(\Phi_{\omega ,c}).
\end{align*}

Case 3: $\gamma >0$ and $-2\sqrt{\omega} <c<-2s_{\ast}\sqrt{\omega}$. In this case we note that $\alpha >1$. By Lemma \ref{lem:2.3}, (\ref{eq:2.18}) and (\ref{eq:2.15}), we obtain that
\begin{align}
\label{eq:2.36}
\| \Phi_{\omega ,c}\|_{L^4}^4 
&
= \frac{ 4(4\omega -c^2)^{3/2} }{ c^2+\gamma (4\omega -c^2) }\cdot
\l[\! 
-\frac{2}{\alpha^2 -1} +\frac{2\alpha}{ (\alpha^2 -1)^{3/2} } \log\l( \alpha +\sqrt{\alpha^2 -1} \r) 
\!\r] 
\\
&=\frac{8}{\gamma} \sqrt{4\omega -c^2} -\frac{8c}{(-\gamma )^{3/2}} \log\l( \alpha +\sqrt{\alpha^2 -1} \r)   
\notag\\
&=\frac{8}{\gamma} \sqrt{4\omega -c^2} -\frac{2c}{-\gamma} M(\Phi_{\omega ,c}).
\notag
\end{align}
This is exactly the same as the formula (\ref{eq:2.34}). Hence the momentum has the same formula as the Case 1. This completes the proof.
\end{proof}
By the Pohozaev identity, the energy of the soliton is represented by the momentum.
\begin{proposition}
\label{prop:2.6}
Let $(\omega ,c)$ and $\gamma$ satisfy \eqref{eq:2.9}. Then we have
\begin{align}
\label{eq:2.37}
E(\phi_{\omega ,c}) =-\frac{c}{4} P(\phi_{\omega ,c}).
\end{align}
\end{proposition}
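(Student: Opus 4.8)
The plan is to read off \eqref{eq:2.37} directly from the mass-critical scaling of the equation, combined with the fact noted right after \eqref{eq:2.3} that $\phi_{\omega ,c}$ is a critical point of the action, $S_{\omega ,c}'(\phi_{\omega ,c})=0$. The spatial scaling underlying \eqref{eq:1.2}, namely $\phi^{\mu}(x):=\mu^{1/2}\phi_{\omega ,c}(\mu x)$ for $\mu>0$, leaves the $L^2$-norm invariant, and a short change-of-variables computation records how each conserved quantity transforms:
\begin{align*}
M(\phi^{\mu})=M(\phi_{\omega ,c}),\qquad P(\phi^{\mu})=\mu\, P(\phi_{\omega ,c}),\qquad E(\phi^{\mu})=\mu^{2} E(\phi_{\omega ,c}).
\end{align*}
The first identity is mass invariance; the third is the quantitative form of mass-criticality, since each of the three terms of $E$—the kinetic term, the derivative-nonlinear term $\rbra[i|\phi|^2\del_x\phi ,\phi]$, and the sextic term—carries the same scaling weight $\mu^{2}$ under this transformation. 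The momentum, involving one net derivative, scales with the intermediate weight $\mu$.

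Granting these scaling laws, the key step is to differentiate the smooth function
\begin{align*}
\mu\mapsto S_{\omega ,c}(\phi^{\mu})=\mu^{2}E(\phi_{\omega ,c})+\frac{\omega}{2}M(\phi_{\omega ,c})+\frac{c}{2}\,\mu\, P(\phi_{\omega ,c})
\end{align*}
at $\mu=1$. Because $\phi_{\omega ,c}=\phi^{1}$ solves \eqref{eq:2.2}, the map has a critical point at $\mu=1$, so its derivative there vanishes, giving $2E(\phi_{\omega ,c})+\frac{c}{2}P(\phi_{\omega ,c})=0$, which is exactly \eqref{eq:2.37}. This is the whole proof at the conceptual level, and it is attractive because it needs none of the explicit soliton formulae.

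The point requiring care is precisely the vanishing of that derivative: it amounts to pairing $S_{\omega ,c}'(\phi_{\omega ,c})=0$ against the dilation generator $g:=\partial_{\mu}\phi^{\mu}|_{\mu=1}=\frac{1}{2}\phi_{\omega ,c}+x\phi_{\omega ,c}'$, so I must verify that $g$ is an admissible variation. For the bright solitons the exponential decay makes $g\in H^1(\R)$ immediate; for the algebraic solitons $c=2\sqrt{\omega}$ one only has $|\phi_{\omega ,c}(x)|\sim (c|x|)^{-1}$, and here I would use the explicit decay of $\phi_{\omega ,c}$ and its derivatives recorded in Section \ref{sec:1.4} to confirm $g\in H^1(\R)$ and that no boundary terms are lost. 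I expect this admissibility check, rather than any deeper difficulty, to be the main obstacle. If one prefers to sidestep it entirely, an equivalent elementary route is to work with the real profile $\Phi_{\omega ,c}$ solving \eqref{eq:2.6}: multiplying \eqref{eq:2.6} by $\Phi_{\omega ,c}$ and by $x\Phi_{\omega ,c}'$ and integrating by parts yields two Pohozaev relations among $\| \Phi_{\omega ,c}'\|_{L^2}^2$, $\| \Phi_{\omega ,c}\|_{L^2}^2$, $\| \Phi_{\omega ,c}\|_{L^4}^4$ and $\| \Phi_{\omega ,c}\|_{L^6}^6$; combining them and inserting the gauge expressions for $E(\phi_{\omega ,c})$ and for $P(\phi_{\omega ,c})$ (the latter being \eqref{eq:2.29}) produces \eqref{eq:2.37} after the $\frac{c^2}{8}M$ contributions cancel. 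That bookkeeping is routine, so either route closes the argument.
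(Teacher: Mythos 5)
Your proof is correct and is essentially identical to the paper's own argument: the paper also sets $\phi^{\lambda}(x)=\lambda^{1/2}\phi(\lambda x)$, uses the scaling laws $E(\phi^{\lambda})=\lambda^2E(\phi)$, $M(\phi^{\lambda})=M(\phi)$, $P(\phi^{\lambda})=\lambda P(\phi)$, and differentiates $S_{\omega,c}(\phi^{\lambda}_{\omega,c})$ at $\lambda=1$ using $S_{\omega,c}'(\phi_{\omega,c})=0$. Your extra remark about checking that the dilation generator is an admissible variation for the algebraic solitons is a reasonable point of rigor that the paper passes over silently, but it does not change the route.
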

\begin{proof}
For completeness we give a proof. Let $\phi^{\lambda}(x) =\lambda^{1/2}\phi(\lambda x)$ for $\lambda >0$. 
Then we have
\begin{align}
\label{eq:2.38}
S_{\omega ,c} (\phi_{\omega ,c}^{\lambda}) &= E(\phi_{\omega ,c}^{\lambda}) 
+\frac{\omega}{2}M(\phi_{\omega ,c}^{\lambda}) 
+\frac{c}{2}P (\phi_{\omega ,c}^{\lambda}) 
\\
&=\lambda^2 E(\phi_{\omega ,c}) +\frac{\omega}{2}M(\phi_{\omega ,c})+\frac{c\lambda}{2}P(\phi_{\omega ,c}).
\notag
\end{align}
Since $S_{\omega ,c}' (\phi_{\omega ,c}) =0$, we deduce that
\begin{align*}
0= \l. \frac{d}{d\lambda}  S_{\omega ,c} (\phi_{\omega ,c}^{\lambda}) \r|_{\lambda =1} 
=2E(\phi_{\omega ,c}) + \frac{c}{2}P(\phi_{\omega ,c}).
\end{align*}
Hence the result follows.
\end{proof}
\subsection{Positivity of the momentum}%
\label{sec:2.4}
The effect of the momentum plays an essential role in the potential well theory. In this subsection we study the sign of the momentum of the soliton. For $(\omega ,c)$ satisfying \eqref{eq:2.9}, we rewrite
$(\omega ,c) =(\omega ,2s\sqrt{\omega})$,
where the parameter $s$ satisfies 
\begin{align}
\label{eq:2.39}
\begin{array}{ll}
\ds\text{if}~b>-3/16,& \ds -1 <s\leq 1,\\[7pt]
\ds\text{if}~b\leq -3/16,& \ds -1 <s<-s_{\ast}. 
\end{array}
\end{align}
Since $P(\phi_{\omega ,2s\sqrt{\omega}}) =\sqrt{\omega}P(\phi_{1,2s})$, it is enough to check the sign of $P(\phi_{1,2s})$. 
\begin{figure}[t]
\begin{minipage}{0.49\linewidth} 
 \includegraphics[width=\linewidth]{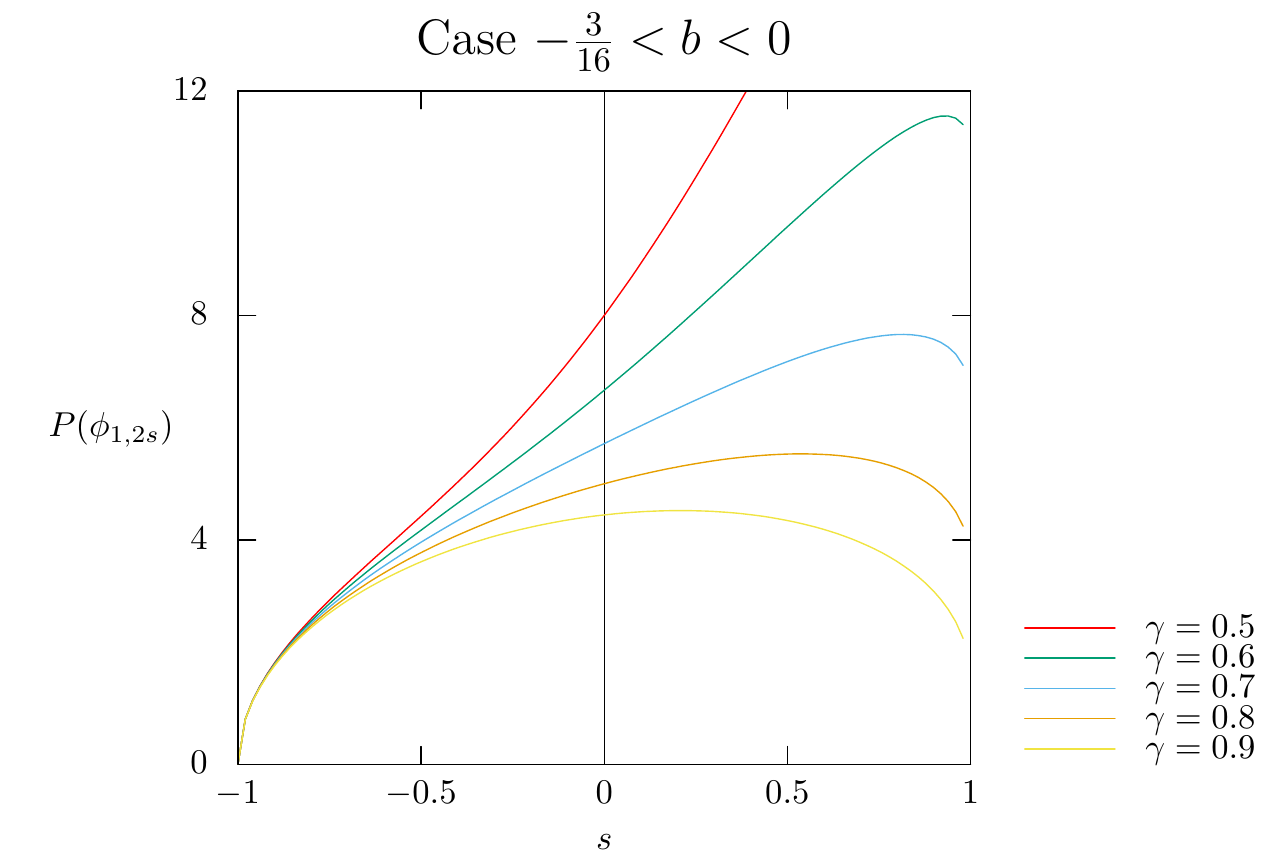}
\end{minipage}
\begin{minipage}{0.49\linewidth}
 \includegraphics[width=\linewidth]{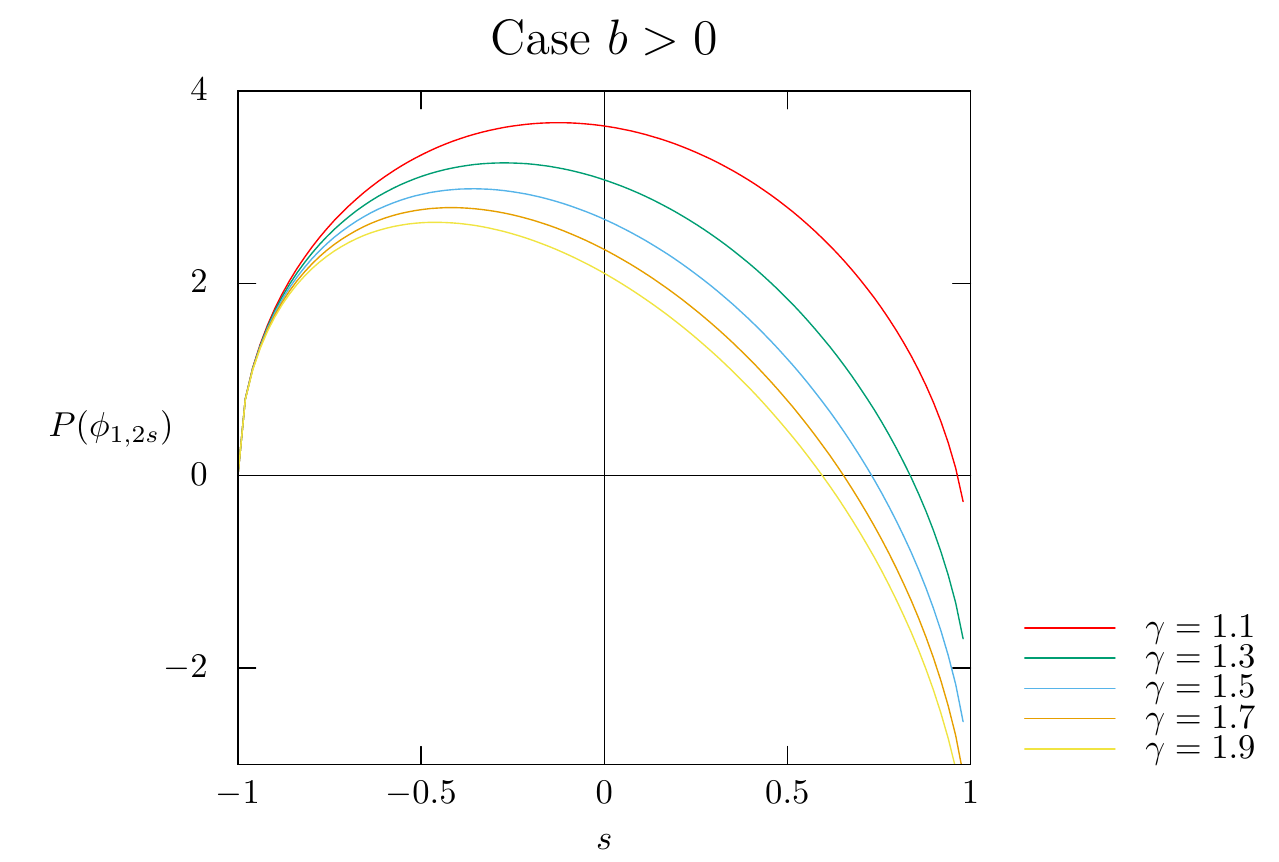}
\end{minipage}
\caption{The function $s\mapsto P(\phi_{1,2s})$ for several values of $b>-3/16$.}
\label{fig:3}
\end{figure}
\begin{proposition}
\label{prop:2.7}
Let $s$ satisfy \eqref{eq:2.39}. Then the following properties hold\textup{:}
\begin{enumerate}[\rm (i)]
\setlength{\itemsep}{2pt}

\item If $b<0$, $P(\phi_{1,2s})>0$ for any $s$.

\item If $b =0$, $P(\phi_{1,2s}) >0$ for $s\in (-1,1)$ and $P(\phi_{1,2})=0$.

\item If $b>0$, there exists a unique $\thickmuskip=1mu s^{*}=s^*(b)\in (0,1)$ such that $\thickmuskip=0mu\thinmuskip=0mu  P(\phi_{1,2s^*})=0$. Moreover, we have $P(\phi_{1,2s})>0$ for $s\in (-1, s^*)$ and $P(\phi_{1,2s})<0$ for $s\in(s^{*},1]$. 
\end{enumerate}
\end{proposition}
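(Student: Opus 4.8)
The plan is to read off the sign of the momentum directly from the explicit formulas \eqref{eq:2.31} and \eqref{eq:2.32}, specialized to the scaling curve by setting $\omega=1$ and $c=2s$ (so that $\sqrt{4\omega-c^2}=2\sqrt{1-s^2}$), since by the scaling relation $P(\phi_{\omega,2s\sqrt{\omega}})=\sqrt{\omega}\,P(\phi_{1,2s})$ it suffices to treat $\omega=1$. When $\gamma=0$ formula \eqref{eq:2.32} gives $P(\phi_{1,2s})=-\frac{1+2s^2}{3s}M(\phi_{1,2s})$, whose sign is immediate because $s<0$ on the admissible range; this disposes of $b=-3/16$. For $\gamma\neq0$ the formula reads $P(\phi_{1,2s})=s\frac{1-\gamma}{\gamma}M(\phi_{1,2s})+\frac{4}{\gamma}\sqrt{1-s^2}$, whose two summands can have opposite signs, so the key idea is to insert the explicit mass formulas of Proposition \ref{prop:2.2} together with the relation between $s$ and the auxiliary parameter ($\beta$ when $\gamma>0$, $\alpha$ when $\gamma<0$), and thereby reduce the sign of $P(\phi_{1,2s})$ to the sign of a single clean one-variable function.

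For $\gamma>0$ I would use the substitution $\beta=\cos\theta$ with $\theta\in[0,\pi)$, under which $\tan^{-1}\sqrt{\frac{1+\beta}{1-\beta}}=\frac{\pi-\theta}{2}$ and $\sqrt{1-\beta^2}=\sin\theta$. After extracting a manifestly positive prefactor, the sign of $P(\phi_{1,2s})$ equals the sign of
\[
H(\theta):=(1-\gamma)(\pi-\theta)\cos\theta+\sin\theta,\qquad \theta\in[0,\pi),
\]
where $\theta=0$ corresponds to $s=1$ and $\theta\uparrow\pi$ to $s\downarrow-1$. When $0<\gamma\leq1$ one has $1-\gamma\geq0$, and $H>0$ is checked on $[0,\pi/2]$ by inspection, while on $(\pi/2,\pi)$ the substitution $\eta=\pi-\theta$ reduces the claim to the elementary inequality $\tan\eta>\eta$ on $(0,\pi/2)$; this yields (i) for $-3/16<b<0$ and (ii) for $b=0$ (where $H=\sin\theta$ vanishes only at $\theta=0$, i.e.\ $s=1$).

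The case $b>0$, i.e.\ $\gamma>1$, is the main obstacle, since here $H(0)=-(\gamma-1)\pi<0$ while $H(\pi/2)=1>0$, so $P$ genuinely changes sign and one must show the change is unique. For this I would compute
\[
H'(\theta)=\gamma\cos\theta+(\gamma-1)(\pi-\theta)\sin\theta,
\]
which is strictly positive on $[0,\pi/2)$ (each term is nonnegative and $\gamma\cos\theta>0$ there), so $H$ is strictly increasing on $[0,\pi/2)$ and has a unique zero $\theta^*\in(0,\pi/2)$; on $[\pi/2,\pi)$ one has $\sin\theta>0$ and the first term of $H$ is nonnegative, hence $H>0$. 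Translating back through $\beta=\cos\theta$ and the strictly increasing map $s\mapsto\beta(s)$ of \eqref{eq:2.20} produces a unique $s^*\in(0,1)$ with $P(\phi_{1,2s^*})=0$, $P>0$ on $(-1,s^*)$ and $P<0$ on $(s^*,1]$, which is exactly (iii).

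Finally, for $\gamma<0$ (the remaining part of (i), namely $b<-3/16$) I would use the hyperbolic substitution $\alpha=\cosh\rho$ with $\rho>0$, so that $M(\phi_{1,2s})=\frac{4}{\sqrt{-\gamma}}\rho$ and $\frac{\sqrt{\alpha^2-1}}{\alpha}=\tanh\rho$. The same bookkeeping reduces the sign of $P(\phi_{1,2s})$, after extracting a positive prefactor (using $s<0$ and $\gamma<0$), to the sign of $(1-\gamma)\rho-\tanh\rho$; since $1-\gamma>1$ and $\tanh\rho<\rho$ for $\rho>0$, this quantity is positive, giving $P>0$. The algebraic reductions to $H(\theta)$ and to $(1-\gamma)\rho-\tanh\rho$ are routine; the genuinely delicate point is the monotonicity of $H$ on $[0,\pi/2)$, which is what secures the uniqueness of $s^*$ in the case $b>0$.
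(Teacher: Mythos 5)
Your proof is correct, but it takes a more computational route than the paper's. The paper first observes from \eqref{eq:2.29} that $P(\phi_{1,2s})=-sM(\Phi_{1,2s})+\tfrac14\|\Phi_{1,2s}\|_{L^4}^4$ is automatically positive whenever $s\leq 0$, since both summands are then nonnegative and the second is strictly positive; this instantly disposes of the entire range $\gamma\leq 0$ (where admissible $s$ are negative) and of all negative $s$ in the remaining cases, so only $s>0$ with $\gamma>0$ needs the formula \eqref{eq:2.40}. There, for $0<\gamma\leq 1$ both terms of \eqref{eq:2.40} are nonnegative by inspection, and for $\gamma>1$ the paper gets uniqueness of $s^*$ by noting that $s\mapsto sM(\phi_{1,2s})$ is positive and strictly increasing on $[0,1]$ (re-using the mass monotonicity of Proposition \ref{prop:2.2}), so that with the negative coefficient $-1+\tfrac1\gamma$ the whole function $s\mapsto P(\phi_{1,2s})$ is strictly decreasing on $[0,1]$ and changes sign exactly once between $P(\phi_{1,0})>0$ and $P(\phi_{1,2})<0$. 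You instead substitute $\beta=\cos\theta$ (resp.\ $\alpha=\cosh\rho$) and reduce everything to the sign of $H(\theta)=(1-\gamma)(\pi-\theta)\cos\theta+\sin\theta$ (resp.\ of $(1-\gamma)\rho-\tanh\rho$), proving uniqueness of the zero via $H'(\theta)=\gamma\cos\theta+(\gamma-1)(\pi-\theta)\sin\theta>0$ on $[0,\pi/2)$. Your reductions and sign analyses are all correct (I checked the prefactors, including the sign flip from $\gamma\sqrt{-\gamma}<0$ in the hyperbolic case), and your argument is self-contained at the level of the elementary inequalities $\tan\eta>\eta$ and $\tanh\rho<\rho$, not needing Proposition \ref{prop:2.2}; the price is that you do nontrivial work for the case $\gamma<0$ and for negative $s$, where the paper's one-line observation from \eqref{eq:2.29} already settles the matter.
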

\begin{remark}
\label{rem:2.8}
The existence of $s^*$ in (iii) was first proved in \cite{O14}. As in Figure \ref{fig:3}, the zero point of the function $s\mapsto P(\phi_{1,2s})$ moves to the right and converges to $1$ as $b\downarrow 0$. This remark is rigorously proved below.
\end{remark}
\begin{proof}
From the formula \eqref{eq:2.29}, $P(\phi_{1,2s})$ is always positive when $s\leq 0$. Hence we need only consider the case $s>0$.

(i) It is enough to consider the case $-3/16 <b<0$. First we note that the formula (\ref{eq:2.31}) is rewritten as
\begin{align}
\label{eq:2.40}
P(\phi_{1,2s}) =s\l( -1+\frac{1}{\gamma} \r) M(\phi_{1,2s}) +\frac{4}{\gamma}\sqrt{1-s^2}.
\end{align}
Since $-1+\frac{1}{\gamma}>0$, it follows from (\ref{eq:2.40}) that $P(\phi_{1,2s}) >0$ for $s\in(0,1]$. 

(ii) This is obvious from the formula (\ref{eq:2.40}).

(iii) When $b>0$, we note that
\begin{align*}
&P(\phi_{1,0}) =\frac{4}{\gamma}>0, \\
&P(\phi_{1,2}) = \l( -1+\frac{1}{\gamma} \r) M(\phi_{1,2})=
-\frac{4\pi\l(\gamma -1\r)}{\gamma^{3/2}} <0,
\end{align*}
and the function $[0,1]\ni s\mapsto P(\phi_{1,2s})$ is continuous and strictly decreasing. Therefore there exists $s^{*}\in (0,1)$ such that $P(\phi_{1,2s^*})=0$, $P(\phi_{1,2s})>0$ for $s\in (0, s^*)$ and $P(\phi_{1,2s})<0$ for $s\in(s^{*},1]$. This completes the proof.
\end{proof}
We define a function $(\omega ,c)\mapsto d(\omega ,c)$ by
\begin{align}
\label{eq:2.41}
d(\omega ,c) := S_{\omega ,c} (\phi_{\omega ,c}). 
\end{align}
We note that $d(\omega ,2s\sqrt{\omega})=\omega d(1,2s)$. From Proposition \ref{prop:2.7} we obtain 
the following key lemma on the proof of Theorem \ref{thm:1.7}. 
\begin{lemma}
\label{lem:2.9}
Let $s$ satisfy \eqref{eq:2.39}. Then the following properties hold\textup{:}
\begin{enumerate}[\rm (i)]

\item If $b>0$, the function $(-1,1]\ni s \mapsto d(1,2s) $ is strictly increasing on $(-1,s^*)$ and strictly decreasing on $(s^* ,1]$. 

\item If $-3/16<b\leq 0$, the function $(-1,1]\ni s \mapsto d(1,2s) $ is strictly increasing.

\item If $b\leq -3/16$, the function $(-1,-s_*)\ni s \mapsto d(1,2s) $ is strictly increasing.

\end{enumerate}
\end{lemma}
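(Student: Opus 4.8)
The plan is to identify the $s$-derivative of $s\mapsto d(1,2s)$ with the momentum of the corresponding soliton, after which the three cases follow immediately from the sign information in Proposition~\ref{prop:2.7}. The starting point is that $\phi_{\omega,c}$ is a critical point of $S_{\omega,c}$, i.e. $S_{\omega ,c}'(\phi_{\omega,c})=0$. Writing $d(\omega,c)=S_{\omega,c}(\phi_{\omega,c})$ and differentiating in the parameters, the contribution coming from the variation of $\phi_{\omega,c}$ pairs against $S_{\omega,c}'(\phi_{\omega,c})=0$ and hence drops out, leaving only the explicit dependence of $S_{\omega,c}$ on $(\omega,c)$. Since $\partial_\omega S_{\omega,c}(\varphi)=\tfrac12 M(\varphi)$ and $\partial_c S_{\omega,c}(\varphi)=\tfrac12 P(\varphi)$, this yields the Hellmann--Feynman-type formulae
\begin{align*}
\partial_\omega d(\omega,c)=\frac12 M(\phi_{\omega,c}),\qquad \partial_c d(\omega,c)=\frac12 P(\phi_{\omega,c}).
\end{align*}

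First I would justify the $C^1$-dependence needed above. From the explicit formulae \eqref{eq:2.7}--\eqref{eq:2.8} for $\Phi_{\omega,c}$, together with \eqref{eq:2.4} and \eqref{eq:2.10}, the map $(\omega,c)\mapsto\phi_{\omega,c}\in H^1(\R)$ is smooth on the open parameter region of \eqref{eq:2.9}; this legitimises differentiating through the critical point and establishes the formulae above. (Alternatively, one may bypass the abstract smoothness by inserting the Pohozaev identity \eqref{eq:2.37}, which gives the closed expression $d(1,2s)=\tfrac12 M(\phi_{1,2s})+\tfrac{s}{2}P(\phi_{1,2s})$, and then differentiating using the explicit mass and momentum formulae of Propositions~\ref{prop:2.2} and \ref{prop:2.4}.)

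Next I would restrict to the scaling curve. Fixing $\omega=1$ and setting $c=2s$, the chain rule gives
\begin{align*}
\frac{d}{ds}\,d(1,2s)=2\,\partial_c d(1,2s)=P(\phi_{1,2s}).
\end{align*}
The three assertions are now direct consequences of Proposition~\ref{prop:2.7}. In case (i), $b>0$, we have $P(\phi_{1,2s})>0$ on $(-1,s^*)$ and $P(\phi_{1,2s})<0$ on $(s^*,1]$, so $d(1,2s)$ is strictly increasing on $(-1,s^*)$ and strictly decreasing on $(s^*,1]$. In case (ii), $-3/16<b\le0$, we have $P(\phi_{1,2s})>0$ throughout $(-1,1)$ (with $P(\phi_{1,2})=0$ only in the borderline case $b=0$), so $d(1,2s)$ is strictly increasing. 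In case (iii), $b\le-3/16$, Proposition~\ref{prop:2.7}(i) gives $P(\phi_{1,2s})>0$ on the whole admissible range $(-1,-s_*)$, whence strict monotonicity.

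The only genuine care needed is at the right endpoint $s=1$ in cases (i) and (ii), where the soliton profile switches to the algebraic form and the derivative formula is a priori valid only on the open interval. There I would argue by continuity: since $s\mapsto M(\phi_{1,2s})$ and $s\mapsto P(\phi_{1,2s})$ extend continuously to $s=1$ (Propositions~\ref{prop:2.2} and \ref{prop:2.4}), so does $d(1,2s)$, and strict monotonicity on the half-open interval upgrades to the closed endpoint in the standard way (a function continuous on $(a,b]$ with positive derivative on $(a,b)$ is strictly increasing on $(a,b]$). The main obstacle is thus not the core computation---which reduces the statement to the already-established sign of the momentum---but rather the bookkeeping for the smooth parameter dependence and the matching of the bright- and algebraic-soliton regimes at $s=1$.
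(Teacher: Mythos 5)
Your proof is correct and follows essentially the same route as the paper: the paper likewise differentiates $d(1,2s)=S_{1,2s}(\phi_{1,2s})$ using $S_{1,2s}'(\phi_{1,2s})=0$ to get $\frac{d}{ds}d(1,2s)=P(\phi_{1,2s})$ and then invokes Proposition~\ref{prop:2.7}. Your additional care about smooth parameter dependence and the endpoint $s=1$ is sound but goes beyond what the paper records.
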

\begin{proof}
From the definition we have
\begin{align*}
d(1,2s) =S_{1,2s}(\phi_{1,2s})=E(\phi_{1,2s})+\frac{1}{2}M(\phi_{1,2s})+sP(\phi_{1,2s}).
\end{align*}
Since $S_{1,2s}'(\phi_{1,2s})=0$, we have
\begin{align*}
\frac{d}{ds} d(1,2s) =P(\phi_{1,2s}).
\end{align*}
Hence the result follows from Proposition \ref{prop:2.7}.
\end{proof}


\section{Gauge transformation and local well-posedness in $H^1(\R)$}
\label{sec:3}
In this section we review the gauge transformation and the local well-posedness theory in the energy space.
First we recall the result of local well-posedness for (\ref{eq:1.1}) in the energy space. 
\begin{theorem}[\cite{Oz96}]
\label{thm:3.1}
For every $u_0 \in H^1 (\R)$, there exist $0<T_{\rm min}, T_{\rm max}\leq \infty$ and a unique, maximal solution $u\in C((-T_{\rm min},T_{\rm max}) , H^1 (\R) )\cap L^4((-T_{\rm min},T_{\rm max}), W^{1,\infty}(\R ))$ of \eqref{eq:1.1} with $u(0)=u_0$. Furthermore, the following properties hold\textup{:}
\begin{enumerate}[\rm (i)]
\setlength{\itemsep}{3pt}

\item If $T_{\rm max}<\infty$\,{\rm (}resp., if  $T_{\rm min}<\infty${\rm )}, then $\|\del_x u(t)\|_{L^2}\!\to\!\infty$ as $t\uparrow T_{\rm max}$\,
{\rm (}resp., as $t\downarrow -T_{\rm min}${\rm )}.

\item There is conservation of energy, mass and momentum; i.e., $E(u(t))=E(u_0)$, $M(u(t))=M(u_0 )$ and $P(u(t))=P(u_0)$ for all $t \in (-T_{\min}, T_{\max})$.

\item Continuous dependence is satisfied in the following sense$;$ if $u_{0n}\to u_0$ in $H^1(\R )$ and if $I\subset (-T_{\rm min} (u_0 ) ,T_{\rm max} (u_0) )$ is a closed interval, then the maximal solution $u_n$ of \eqref{eq:1.1} with $u_n(0)=u_{0n}$ is defined on $I$ for $n$ large enough and satisfies $u_n \to u$ in $C(I, H^1 (\R))$.
\end{enumerate}
\end{theorem}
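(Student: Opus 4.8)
The plan is to prove this by a contraction-mapping argument applied to the Duhamel formulation, preceded by a gauge transformation that tames the quasilinear character of the problem. The obstruction to a naive fixed point in $C([-T,T],H^1(\R))$ is the loss of one derivative in the term $i|u|^2\del_x u$: upon differentiating the Duhamel integral one meets $\del_x(|u|^2\del_x u)$, and the free propagator $e^{it\del_x^2}$ cannot by itself absorb a full spatial derivative. First I would apply the gauge transformation reviewed in this section, $v=\cG_a(u)$ for a suitable $a\in\R$, chosen so that in the transformed equation the spatial derivative falling on the cubic nonlinearity can be recovered through the local smoothing effect. Since $u\mapsto\cG_a(u)$ is locally bi-Lipschitz on $H^1(\R)$, every assertion transfers back to \eqref{eq:1.1}.

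For the gauged equation I would run the iteration in a function space adapted to derivative dispersive equations, combining the Strichartz norm of the admissible pair $(4,\infty)$---which is precisely the origin of the $L^4((-T,T),W^{1,\infty}(\R))$ component of the solution class---together with the local smoothing norm $\|\del_x^{1/2}\,\cdot\,\|_{L^\infty_xL^2_t}$ and its dual maximal-function norm. The homogeneous and inhomogeneous Strichartz estimates control the $C([-T,T],H^1(\R))$ and $L^4_tW^{1,\infty}_x$ pieces of the linear evolution and of the quintic term $b|u|^4u$, the latter estimated by H\"older in space-time. The remaining cubic term carrying a derivative is handled by the inhomogeneous Kenig--Ponce--Vega smoothing estimate
\[
\Big\|\del_x\int_0^t e^{i(t-s)\del_x^2}F(s)\,ds\Big\|_{L^\infty_xL^2_t}\lesssim \|F\|_{L^1_xL^2_t},
\]
which gains one full spatial derivative and thereby absorbs the loss. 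Choosing the radius of the ball comparable to $\|u_0\|_{H^1}$ and $T=T(\|u_0\|_{H^1})$ small makes the nonlinear map a contraction, producing a unique local solution; the usual continuation argument then yields the maximal interval $(-T_{\rm min},T_{\rm max})$.

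Since $T$ depends only on $\|u_0\|_{H^1}$, the blow-up alternative in (i) follows: if $T_{\rm max}<\infty$ the $H^1$-norm cannot stay bounded, and because the $L^2$-norm is conserved this is equivalent to $\|\del_x u(t)\|_{L^2}\to\infty$. For the conservation laws in (ii), one multiplies \eqref{eq:1.1} by $\overline{u}$, by $\del_t\overline{u}$, and by $\del_x\overline{u}$ and integrates, obtaining conservation of mass, energy and momentum respectively; these formal computations I would justify by approximating $u_0$ by smooth data, verifying the identities for the smoother solutions, and passing to the limit via the continuous dependence of (iii). Finally (iii) is a consequence of the Lipschitz bound on the solution map supplied by the contraction estimate on short intervals, iterated finitely many times to cover the compact interval $I$. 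I expect the main obstacle to be precisely the derivative loss: arranging the gauge choice and the smoothing estimate so that the derivative is recovered inside one closed multilinear framework is the technical heart of the argument.
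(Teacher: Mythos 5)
Your architecture (gauge transformation, then a fixed point in a space built from the Strichartz pair $(4,\infty)$, the smoothing norm and the maximal-function norm) is a recognized route for derivative Schr\"odinger equations, but the contraction you describe does not close for \emph{arbitrary} data in $H^1(\R)$, and arbitrary data is exactly what Theorem \ref{thm:3.1} asserts. To avoid losing a derivative, the cubic derivative term must be estimated by pairing the differentiated factor in the smoothing norm $L^\infty_xL^2_T$ against the two undifferentiated factors in the maximal-function norm $L^2_xL^\infty_T$ (so that $L^2_x\cdot L^2_x\cdot L^\infty_x\subset L^1_x$ and $L^\infty_T\cdot L^\infty_T\cdot L^2_T\subset L^2_T$), giving
\[
\bigl\||u|^2\,\del_x u\bigr\|_{L^1_xL^2_T}\lesssim \|u\|_{L^2_xL^\infty_T}^2\,\|\del_xu\|_{L^\infty_xL^2_T}.
\]
Neither factor on the right carries a positive power of $T$: the maximal-function norm of any iterate is bounded below by (a constant times) $\|u_0\|_{L^2}$ uniformly as $T\downarrow 0$, so the contraction condition becomes $CR^2<1$ with $R\sim\|u_0\|_{H^1}$, i.e.\ a \emph{small-data} theorem. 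This is precisely the content of the Kenig--Ponce--Vega result for such nonlinearities and the reason the gauge transformation was introduced in this context. If you instead try to manufacture a factor of $T^{1/2}$ by placing the differentiated factor in $L^2_TL^2_x$, you are forced to measure $3/2$ derivatives of $u$ in $L^2_x$, which $H^1$ data does not control. Moreover, no single gauge $\cG_a$ removes the derivative entirely: $a=1/2$ kills $i|v|^2\del_xv$ but produces $-iv^2\del_x\overline{v}$, which suffers from the identical obstruction, so ``gauge $+$ smoothing'' as you set it up still only yields small data.

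The proof the paper points to (Section \ref{sec:3}, following \cite{Oz96, HO94a}) avoids smoothing estimates altogether: one applies the same gauge factor $\exp\bigl(\frac{i}{2}\int_{-\infty}^x|u|^2\,dy\bigr)$ to \emph{both} $u$ and $\del_xu$, obtaining the system \eqref{eq:3.1} for $(\varphi,\psi)$ in which no derivative falls on the unknowns. For that system, Strichartz estimates for $(4,\infty)$ combined with H\"older in time supply the factor $T^{\theta}$ that makes the map a contraction on a ball of radius $\sim\|u_0\|_{H^1}$ for every $u_0\in H^1(\R)$; the price is verifying that the constraint $\psi=\del_x\varphi-\frac{i}{2}|\varphi|^2\varphi$ propagates, which is the ``complex calculation'' the paper alludes to. Your handling of (i)--(iii) (blow-up alternative from $T=T(\|u_0\|_{H^1})$, conservation laws by regularization, continuous dependence from the Lipschitz bound of the fixed point) would be fine once a large-data contraction is actually in hand, but as written the central existence step fails for large data.
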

In \cite{Oz96} the proof of Theorem \ref{thm:3.1} is done by transforming the equation (\ref{eq:1.1}) into a new system of equations as follows; see also \cite{H93, HO92, HO94a}. For the solution $u$ of (\ref{eq:1.1}), we set 
\begin{align*}
\varphi (t,x) &= \exp\l( \frac{i}{2}\int_{-\infty}^{x}|u(t,y)|^2 dy\r) u(t,x) ,\\
\psi (t,x) &= \exp\l( \frac{i}{2}\int_{-\infty}^{x}|u(t,y)|^2 dy\r) \del_x u(t,x),
\end{align*}
then new functions $\varphi$ and $\psi$ formally satisfy
\begin{align}
\label{eq:3.1}
\l\{
\begin{array}{l}
i\del_t \varphi +\del^2_x \varphi =i\varphi^2\overline{\psi} +f(\varphi ),\\[3pt]
i\del_t \psi +\del^2_x \psi =-i\psi^2\overline{\varphi} 
+\del_{\varphi}f(\varphi )\psi +\del_{\overline{\varphi}}f(\varphi )\overline{\psi},
\end{array}
\r.
\end{align}
where $f(\varphi )=-b|\varphi|^4\varphi$. Since the system (\ref{eq:3.1}) has no loss of derivatives unlike the original equation (\ref{eq:1.1}), one can solve the Cauchy problem by the fixed point argument. However, in order to construct the solution of (\ref{eq:1.1}) through the system, we need to solve the equation (\ref{eq:3.1}) under the constraint condition
\begin{align*}
\psi =\del_x\varphi -\frac{i}{2}|\varphi|^2\varphi.
\end{align*}
This requires more or less complex calculation; see \cite{HO94a} for the details. We refer to \cite{HO16} for a more direct approach without using a system of equations.

We note that the gauge transformation plays a key role when one transforms the equation (\ref{eq:1.1}) into a system of equations (\ref{eq:3.1}). Here we consider more general gauge transformations as seen in \cite{Wu13}. For $a\in\R$ we define $\cG_a : H^1(\R)\to H^1(\R)$ by
\begin{align}
\label{eq:3.2}
\cG_a (u)(t,x) =\exp\l( ia\int_{-\infty}^{x} |u(t,y)|^2dy \r) u(t,x).
\end{align}
By a direct computation we have the following result.
\begin{proposition}
\label{prop:3.2}
Let $a\in\R$, and let $u\in C((-T_{\rm min},T_{\rm max}) , H^1 (\R) )$ be a maximal solution of \eqref{eq:1.1}. 
Then $v=\cG_a (u) \in C((-T_{\rm min},T_{\rm max}) , H^1 (\R) )$, and satisfies the following equation
\begin{align}
\label{eq:3.3}
i\del_t v+\del^2_x v+(-2a+1)i|v|^2\del_x v-2aiv^2\del_x\overline{v}+\l( a^2+\frac{a}{2}+b\r) |v|^4v=0.
\end{align}
Moreover, the equation \eqref{eq:3.3} has the following conserved quantities\textup{:}
\begin{align*}
E_a (v) &=\frac{1}{2} \| \del_x v\|_{L^2}^2+\l( a-\frac{1}{4}\r) 
\rbra[i|v|^2\del_x v, v]
+\l( \frac{a^2}{2}-\frac{a}{4}-\frac{b}{6}\r) \| v\|_{L^6}^6 ,
\\
M_a (v)&= \| v \|_{L^2}^2,
\\
P_a (v)&= \rbra[i\del_x v,v] +a\| v\|_{L^4}^4.
\end{align*}
\end{proposition}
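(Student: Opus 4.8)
The plan is to verify the evolution equation \eqref{eq:3.3} by direct substitution and then to obtain the conserved quantities by pulling back those of \eqref{eq:1.1} through the gauge map, rather than by any fresh dynamical argument. Write $\theta(t,x)=a\int_{-\infty}^x|u(t,y)|^2\,dy$, so that $v=e^{i\theta}u$ and in particular $|v|=|u|$. The spatial derivatives are immediate: since $\del_x\theta=a|u|^2=a|v|^2$, one gets $\del_x v=e^{i\theta}(\del_x u+ia|u|^2u)$ and, differentiating once more, an expression for $\del_x^2 v$ in terms of $\del_x^2 u$, $\del_x u$, and lower-order nonlinear terms.

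The time derivative is the crux. Computing $\del_t v=e^{i\theta}(\del_t u+i(\del_t\theta)u)$ forces us to express $\del_t\theta$, and hence $\del_t|u|^2$, through the equation. Using \eqref{eq:1.1} to substitute for $\del_t u$ and taking real parts, I would derive the local mass-conservation law $\del_t|u|^2+\del_x J=0$ with explicit current $J=2\im(\overline{u}\del_x u)+\tfrac12|u|^4$; integrating in $x$ and using the decay of $H^1$ functions at $-\infty$ (so the boundary term drops) gives $\del_t\theta=-aJ$. Substituting the resulting $\del_t v$ together with $\del_x^2 v$ into $i\del_t v+\del_x^2 v$ and collecting terms by the degree of nonlinearity, namely the $|v|^2\del_x v$, $v^2\del_x\overline{v}$, and $|v|^4 v$ contributions, should reproduce exactly the coefficients $(-2a+1)$, $-2a$, and $a^2+\tfrac{a}{2}+b$ in \eqref{eq:3.3}.

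For the conserved quantities I would avoid redoing any dynamics: since $\cG_a$ is invertible (its inverse multiplies by $e^{-i\theta}$, with $\theta$ now read off from $|v|^2=|u|^2$), the quantities $M$, $P$, $E$ conserved for \eqref{eq:1.1} by Theorem \ref{thm:3.1}, once rewritten in terms of $v=\cG_a(u)$, become conserved quantities for \eqref{eq:3.3}. The mass is immediate, $M_a(v)=\|v\|_{L^2}^2=M(u)$ from $|v|=|u|$. For the momentum, inserting $u=e^{-i\theta}v$ and $\del_x u=e^{-i\theta}(\del_x v-ia|v|^2v)$ into $P(u)=\re\int i\del_x u\,\overline{u}\,dx$ yields $i\del_x u\,\overline{u}=i\del_x v\,\overline{v}+a|v|^4$, hence $P(u)=\rbra[i\del_x v,v]+a\|v\|_{L^4}^4=P_a(v)$. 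The energy is handled identically: expanding $|\del_x u|^2=|\del_x v|^2+a^2|v|^6+2a\,\re(i|v|^2\del_x v\,\overline{v})$ and $(i|u|^2\del_x u,u)=\rbra[i|v|^2\del_x v,v]+a\|v\|_{L^6}^6$ and combining the $\|v\|_{L^6}^6$ and $\rbra[i|v|^2\del_x v,v]$ terms gives precisely the coefficients $a-\tfrac14$ and $\tfrac{a^2}{2}-\tfrac{a}{4}-\tfrac{b}{6}$ appearing in $E_a$.

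I expect the main obstacle to be twofold. First is the careful bookkeeping in the substitution step, where several nonlinear terms of equal degree must combine to give the stated coefficients; this is where an error is most likely. Second is the regularity justification: the maximal solution lies only in $C(H^1)\cap L^4(W^{1,\infty})$, so the manipulations involving $\del_x^2 u$ and $\del_t u$ are strictly speaking formal. I would first carry them out for smooth solutions and then pass to the $H^1$ setting using the continuous dependence of Theorem \ref{thm:3.1}(iii) together with the local Lipschitz continuity of $\cG_a$ on $H^1(\R)$. The continuity-equation computation and the vanishing of the boundary term at $-\infty$ are the only genuinely analytic points; everything else is algebra.
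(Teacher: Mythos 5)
Your proposal is correct and follows essentially the same route as the paper, which simply states the result follows "by a direct computation": your continuity-equation current $J=2\im(\overline{u}\,\del_x u)+\tfrac12|u|^4$, the resulting cancellations producing the coefficients $(-2a+1)$, $-2a$, $a^2+\tfrac{a}{2}+b$, and the pullback identities for $M$, $P$, $E$ all check out. Your remark on justifying the formal manipulations via smooth approximation and the continuous dependence in Theorem \ref{thm:3.1}(iii) is an appropriate way to handle the regularity issue the paper leaves implicit.
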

\begin{remark}
We note that the functions $u$ and $\cG_a(u)$ are defined on the same maximal interval. 
The well-posedness in $H^1(\R)$ for the equations \eqref{eq:1.1} and \eqref{eq:3.3} is equivalent since $u\mapsto\scG_{a}(u)$ is locally Lipschitz continuous on $H^1(\R)$.
\end{remark}
It is important to choose the suitable parameter $a\in\R$ depending on the situation. If we set $a=1/2$, 
the term $i|v|^2\del_x v$ is removed in (\ref{eq:3.3}) and it is useful when one treats the Fourier restriction norm (see \cite{T99, CKSTT01, CKSTT02}). 

When $a=1/4$ the interaction term with derivative in the energy is canceled out, which is useful to derive a mass condition by using sharp Gagliardo--Nirenberg inequalities (see \cite{HO92, Wu13, Wu15}). 
In this paper we apply the gauge transformation in the case $a=1/4$ for giving variational characterization of the solitons including the case $b<0$. By Proposition \ref{prop:3.2}, $v=\cG_{1/4}(u)$ satisfies the equation
\begin{align}
\label{eq:3.4}
i\del_t v+\del_x^2 v+\frac{i}{2}|v|^2\del_x v-\frac{i}{2}v^2\del_x\overline{v}+\frac{3}{16}\gamma |v|^4v=0,
\quad \gamma =1+\frac{16}{3}b,
\end{align}
which is nothing but the equation \eqref{ME}. The conserved quantities of \eqref{eq:3.4} are as follows:
\begin{align*}
\tag{Energy}
 \cE (v) &=E_{1/4}(v)=\frac{1}{2} \| \del_x v\|_{L^2}^2-\frac{\gamma}{32} \| v\|_{L^6}^6 ,
\\
\tag{Mass}
 \cM (v)&=M_{1/4}(v)=\| v \|_{L^2}^2,
\\
\tag{Momentum}
\cP (v)&=P_{1/4}(v)=\rbra[i\del_x v,v] +\frac{1}{4} \| v\|_{L^4}^4.
\end{align*}
We note that the energy functional $\cE(v)$ is nonnegative if $b \leq -3/16$. Hence we have the following result.
\begin{proposition}
\label{prop:3.4}
Let $b\leq -3/16$. For every $u_0 \in H^1(\R)$, the maximal $H^1(\R)$-solution $u$ of \eqref{eq:1.1} given by Theorem \ref{thm:3.1} is global and 
\begin{align*}
\sup_{t\in\R} \| u(t)\|_{H^1}\leq C(\| u_0\|_{H^1})<\infty .
\end{align*}
\end{proposition}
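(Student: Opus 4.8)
The plan is to push everything through the gauge-equivalent equation \eqref{ME}, where the energy becomes manifestly coercive as soon as $b\le -3/16$. First I would set $v=\cG_{1/4}(u)$. By Proposition \ref{prop:3.2} (and the remark following it), $v$ is a maximal $H^1(\R)$-solution of \eqref{ME} on the same interval $(-T_{\rm min},T_{\rm max})$, with conserved energy $\cE(v(t))=\cE(v_0)$ and conserved mass $\cM(v(t))=\cM(v_0)$. The decisive point, already noted above, is that $b\le -3/16$ is equivalent to $\gamma=1+\tfrac{16}{3}b\le 0$, so that the sextic term in
\[
\cE(v)=\frac12\|\del_x v\|_{L^2}^2-\frac{\gamma}{32}\|v\|_{L^6}^6
\]
carries a favorable sign. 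Consequently $\tfrac12\|\del_x v(t)\|_{L^2}^2\le\cE(v(t))=\cE(v_0)$ for every $t$ in the maximal interval.

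Combining this with mass conservation yields the uniform a priori bound
\[
\|v(t)\|_{H^1}^2=\|v(t)\|_{L^2}^2+\|\del_x v(t)\|_{L^2}^2\le\cM(v_0)+2\cE(v_0).
\]
Since $v_0=\cG_{1/4}(u_0)$, and since $\cG_{1/4}$ preserves the $L^2$-norm while $\del_x\cG_{1/4}(u)=e^{\frac{i}{4}\int_{-\infty}^x|u|^2dy}\big(\del_x u+\tfrac{i}{4}|u|^2u\big)$ is controlled in $L^2$ by $\|\del_x u\|_{L^2}+\tfrac14\|u\|_{L^6}^3$, a direct Gagliardo--Nirenberg estimate shows that $\cG_{1/4}$ sends bounded subsets of $H^1(\R)$ into bounded subsets. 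Hence both $\cE(v_0)$ and $\cM(v_0)$ are bounded by a function of $\|u_0\|_{H^1}$, and therefore $\sup_t\|v(t)\|_{H^1}\le C(\|u_0\|_{H^1})$.

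Finally I would return to $u=\cG_{-1/4}(v)$. The analogous computation gives $\del_x u=e^{-\frac{i}{4}\int_{-\infty}^x|v|^2dy}\big(\del_x v-\tfrac{i}{4}|v|^2v\big)$, whence
\[
\|\del_x u(t)\|_{L^2}\le\|\del_x v(t)\|_{L^2}+\tfrac14\|v(t)\|_{L^6}^3\cleq \|\del_x v(t)\|_{L^2}+\|v(t)\|_{L^2}^2\|\del_x v(t)\|_{L^2},
\]
using the one-dimensional estimate $\|v\|_{L^6}^3\cleq\|v\|_{L^2}^2\|\del_x v\|_{L^2}$ together with the bound from the previous step. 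In particular $\sup_t\|\del_x u(t)\|_{L^2}<\infty$ on the maximal interval, so the blow-up alternative of Theorem \ref{thm:3.1}(i) forces $T_{\rm max}=T_{\rm min}=\infty$. Combining this with mass conservation $\|u(t)\|_{L^2}=\|u_0\|_{L^2}$ gives the claimed global uniform bound $\sup_{t\in\R}\|u(t)\|_{H^1}\le C(\|u_0\|_{H^1})$.

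I do not expect any serious obstacle here: the whole argument rests on the sign of $\gamma$, which renders $\cE$ coercive on $\dot H^1(\R)$, after which globality and the uniform bound follow immediately from conservation of mass and energy and the blow-up alternative. The only step requiring a little care is the passage between $u$ and $v$ through $\cG_{\pm1/4}$; but because these maps preserve the $L^2$-norm and carry bounded $H^1$-sets to bounded $H^1$-sets, all constants depend only on $\|u_0\|_{H^1}$, as required.
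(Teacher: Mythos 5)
Your proof is correct and follows essentially the same route as the paper, which simply observes that $\gamma\le 0$ makes $\cE(v)\ge\tfrac12\|\del_x v\|_{L^2}^2$ for $v=\cG_{1/4}(u)$ and then invokes conservation of mass and energy together with the blow-up alternative. The only difference is that you spell out the (routine) estimates showing $\cG_{\pm 1/4}$ maps bounded $H^1$ sets to bounded $H^1$ sets, which the paper leaves implicit.
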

When $b>-3/16$, by applying the sharp Gagliardo--Nirenberg inequality
\begin{align}
\label{eq:3.5}
\| f\|_{L^6}^6 \leq \frac{4}{\pi^2}\| f\|_{L^2}^4\| \partial_{x}f\|_{L^2}^2\quad 
( \Leftrightarrow\eqref{GN1} ),
\end{align}
we deduce that if the initial data $u_0\in H^1(\R)$ satisfying $\| u_0\|_{L^2}^2 <\frac{2\pi}{\sqrt{\gamma}}$, then the corresponding solution is global and bounded. A similar approach was originally taken in \cite{HO92, HO94a, Oz96}.

Finally, we discuss the solitons of \eqref{eq:3.4}. Let $(\omega ,c )$ satisfy (\ref{eq:2.9}). The equation \eqref{eq:3.4} has a two-parameter family of solitons
\begin{align}
\label{eq:3.6}
v_{\omega ,c}(t ,x) =\cG_{1/4}(u_{\omega ,c})(t,x)=e^{i\omega t}\varphi_{\omega ,c}(x-ct),
\end{align}
where $\varphi_{\omega ,c}$ is defined by
\begin{align*}
\varphi_{\omega ,c} (x) =e^{i\frac{cx}{2}}\Phi_{\omega ,c}(x).
\end{align*}
We note that $\varphi_{\omega ,c}$ satisfies the equation
\begin{align}
\label{eq:3.7}
 -\varphi'' +\omega\varphi +ic\varphi' +\frac{c}{2}|\varphi |^2\varphi
-\frac{3}{16}\gamma |\varphi|^4 \varphi =0, \quad x\in \R.
\end{align}
We note that \eqref{eq:3.7} is rewritten as $\cS_{\omega ,c} ' (\varphi )=0$, where 
\begin{align*}
\cS_{\omega ,c} (\varphi ) =\cE (\varphi )+\frac{\omega}{2}\cM (\varphi )
+\frac{c}{2}\cP (\varphi ).
\end{align*}
For the action functionals $S_{\omega ,c}$ and $\cS_{\omega ,c}$, we have the following relation:
\begin{align*}
S_{\omega ,c}(u)=\cS_{\omega ,c} (\cG_{1/4} (u) ) \quad \text{for any}~u\in H^1(\R). 
\end{align*}
In particular, we have
\begin{align}
\label{eq:3.8}
d(\omega ,c) =S_{\omega ,c} (\phi_{\omega ,c}) =\cS_{\omega ,c} (\varphi_{\omega ,c}). 
\end{align}

\section{Variational characterization}
\label{sec:4}
In this section we give a variational characterization of the soliton $v_{\omega ,c}$ defined by (\ref{eq:3.6}). Here we assume that $\gamma$ and $(\omega ,c)$ satisfy
\begin{align} 
\label{eq:4.1}
\begin{array}{ll}
\ds\text{if}~\gamma >0\Leftrightarrow b>-3/16,& \ds -2\sqrt{\omega} <c\leq 2\sqrt{\omega} ,\\[7pt]
\ds\text{if}~\gamma = 0\Leftrightarrow b= -3/16,& \ds -2\sqrt{\omega} <c<0.
\end{array}
\end{align}
We prepare some notations. First we define function spaces by
\begin{align}
\label{eq:4.2}
\varphi \in X_{\omega,c} &\iff
\l\{
\begin{array}{ll}
\ds \varphi \in H^1(\R) & \ds\text{if}~ \omega >c^2/4,
\\[5pt]
\ds e^{-i\frac{cx}{2}}\varphi \in \dot{H}^1(\R) \cap L^{4}(\R) & \ds\text{if}~c=2\sqrt{\omega},
\end{array}
\r.\\[3pt]
\| \varphi\|_{ X_{c^2/4,c} }&:=\| e^{-i\frac{c}{2}\cdot}\varphi\|_{\dot{H}^1\cap L^4}.
\notag
\end{align}
Note that $H^1(\R) \subset X_{c^2/4,c}$. We consider the functional $\cK_{\omega ,c}(\varphi )=\l.\frac{d}{d\lambda}\cS_{\omega ,c}(\lambda u)\r|_{\lambda=1}$ which has the following explicit formula:
\begin{align}
\label{eq:4.3}
\cK_{\omega ,c}(\varphi ) := \| \del_x\varphi\|_{L^2}^2+\omega\|\varphi\|_{L^2}^2
+c\rbra[i\del_x\varphi, \varphi] +\frac{c}{2}\| \varphi\|_{L^4}^4 -\frac{3}{16}\gamma\| \varphi\|_{L^6}^6.
\end{align}
We consider the following minimization problem:
\begin{align*}
\mu (\omega ,c) :=\inf \l\{ \cS_{\omega ,c}(\varphi) :\varphi \in X_{\omega ,c}\setminus \{ 0\} 
, \cK_{\omega ,c}(\varphi )=0  \r\}.
\end{align*}
We define the sets $\scG_{\omega ,c}$ and $\scM_{\omega ,c}$ by
\begin{align*}
\scG_{\omega ,c}&:=\l\{ \varphi \in X_{\omega ,c}\setminus\{ 0\} : 
\cS_{\omega ,c}'(\varphi )=0\r\},\\
\scM_{\omega ,c}&:=\l\{ \varphi \in X_{\omega ,c}\setminus\{ 0\} :
\cS_{\omega ,c}(\varphi) =\mu (\omega ,c), \cK_{\omega ,c}(\varphi )=0  \r\}.
\end{align*}
$\scG_{\omega ,c}$ is the set of nontrivial critical points of $\cS_{\omega ,c}$, and $\scM_{\omega ,c}$ is the set of minimizers of $\cS_{\omega ,c}$ on the Nehari manifold. The main result in this section is the following result.
\begin{proposition}
\label{prop:4.1}
Let $\gamma$ and $(\omega ,c)$ satisfy \eqref{eq:4.1}. Then we have
\begin{align}
\label{eq:4.4}
\scG_{\omega ,c}=\scM_{\omega ,c} =\l\{ e^{i\theta}\varphi_{\omega ,c}(\cdot -y):\theta\in[0,2\pi), y\in\R \r\},
\end{align}
and $d(\omega ,c)=\mu (\omega ,c)$.
\end{proposition}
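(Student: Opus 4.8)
The plan is to eliminate the derivative nonlinearity by the modulation $w=e^{-icx/2}\varphi$, which is exactly the substitution underlying \eqref{eq:3.7}. A direct computation (completing the square in $\del_x(e^{icx/2}w)$) shows that
\begin{align*}
\cS_{\omega ,c}(\varphi ) = \cT_{\omega ,c}(w), \quad \cT_{\omega ,c}(w):= \frac{1}{2}\| \del_x w\|_{L^2}^2 + \frac{1}{2}\l( \omega -\frac{c^2}{4}\r)\| w\|_{L^2}^2 + \frac{c}{8}\| w\|_{L^4}^4 -\frac{\gamma}{32}\| w\|_{L^6}^6,
\end{align*}
and that $\cS_{\omega ,c}'(\varphi )=0$ is equivalent to $\cT_{\omega ,c}'(w)=0$, i.e. to the elliptic equation \eqref{eq:2.6} now read for the complex unknown $w$. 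Since $|w|=|\varphi |$, the map $\varphi\mapsto w$ is a bijection of $X_{\omega ,c}$ onto $H^1(\R)$ (resp. $\dot H^1(\R)\cap L^4(\R)$ in the algebraic case) preserving every norm appearing above, and $\cK_{\omega ,c}(\varphi )=\cN_{\omega ,c}(w)$ where $\cN_{\omega ,c}(w):=\frac{d}{d\lambda}\cT_{\omega ,c}(\lambda w)\big|_{\lambda =1}$. The whole problem therefore transfers to the minimization of the genuine, derivative-free double-power functional $\cT_{\omega ,c}$ on its Nehari manifold $\{ \cN_{\omega ,c}(w)=0\}$.

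Next I would identify the critical points by ODE arguments alone. The inclusion $\{ e^{i\theta}\varphi_{\omega ,c}(\cdot -y)\}\subset\scG_{\omega ,c}$ is immediate, since $\varphi_{\omega ,c}$ solves \eqref{eq:3.7} and $\cS_{\omega ,c}$ is invariant under phase rotations and translations; moreover any critical point lies on the Nehari manifold because $\cK_{\omega ,c}(\varphi )=\langle \cS_{\omega ,c}'(\varphi ),\varphi\rangle$. For the reverse inclusion, any nontrivial $\varphi\in\scG_{\omega ,c}$ yields $w=e^{-icx/2}\varphi$ solving \eqref{eq:2.6}; multiplying by $\overline w$ and taking imaginary parts gives $\frac{d}{dx}\im(\overline w\del_x w)=0$, so that $\im(\overline w\del_x w)$ is constant, and since $\overline w\del_x w\in L^{4/3}(\R)$ this constant vanishes (cf. \cite[Lemma 2]{CO06}). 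Hence $w$ has constant phase and $|w|$ is a nonnegative decaying solution of the scalar equation \eqref{eq:2.6}; by uniqueness of such a solution (\cite{BeL83}, matching the explicit formulae \eqref{eq:2.7}--\eqref{eq:2.8}) we obtain $w=e^{i\theta}\Phi_{\omega ,c}(\cdot -y)$. This proves $\scG_{\omega ,c}=\{ e^{i\theta}\varphi_{\omega ,c}(\cdot -y):\theta ,y\in\R\}$ with no variational input.

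It remains to show that $\scM_{\omega ,c}$ coincides with this orbit and that $d(\omega ,c)=\mu (\omega ,c)$. The key structural fact is that, because $\gamma\geq 0$ under \eqref{eq:4.1}, the fibering map $\lambda\mapsto\cT_{\omega ,c}(\lambda w)$ has for each $w\neq 0$ a unique positive critical point, which is a strict maximum; thus the Nehari manifold is a smooth graph over rays and $\mu (\omega ,c)$ is the associated minimax value. I would then (a) take a minimizing sequence, symmetrize $|w|$ and re-project onto the Nehari manifold to reduce to nonnegative symmetric functions, and extract a nontrivial minimizer $w_0$; (b) observe that the Lagrange multiplier $\eta$ in $\cT_{\omega ,c}'(w_0)=\eta\,\cN_{\omega ,c}'(w_0)$ vanishes, since pairing with $w_0$ gives $0=\cN_{\omega ,c}(w_0)=\eta\,\langle \cN_{\omega ,c}'(w_0),w_0\rangle$ while $\langle \cN_{\omega ,c}'(w_0),w_0\rangle <0$ (the fibering map is strictly concave at its maximum), so that $w_0$ is a genuine critical point; (c) apply the ODE identification above to conclude $w_0=e^{i\theta}\Phi_{\omega ,c}(\cdot -y)$, whence $\mu (\omega ,c)=\cT_{\omega ,c}(w_0)=d(\omega ,c)$ and $\scM_{\omega ,c}$ equals the orbit. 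Assembling the three identifications gives \eqref{eq:4.4} together with $d(\omega ,c)=\mu (\omega ,c)$.

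The hard part will be step (a), the existence of the minimizer. The functional is indefinite (the quartic coefficient $c/8$ changes sign with $c$), the sextic term is $L^2$-critical in one dimension, and in the algebraic case $c=2\sqrt{\omega}$ the mass term disappears, so one must work in the scaling-critical space $\dot H^1(\R)\cap L^4(\R)$, where invariance under translations and dilations obstructs compactness. Keeping the minimizing sequence bounded and bounded away from $0$ via the sharp Gagliardo--Nirenberg inequality \eqref{eq:3.5}, and excluding vanishing and dichotomy (for instance by concentration-compactness), is where the genuine analytic work lies.
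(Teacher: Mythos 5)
Your proposal is correct in outline and its skeleton coincides with the paper's: (1) identify $\scG_{\omega ,c}$ with the orbit of $\varphi_{\omega ,c}$ by ODE arguments (the paper's Lemma \ref{lem:4.3}); (2) show any minimizer is a critical point via the Lagrange multiplier computation $\tbra[\cK_{\omega ,c}'(\varphi ),\varphi ]<0$ on the Nehari manifold (Lemma \ref{lem:4.4}); (3) prove $\scM_{\omega ,c}\neq\emptyset$ by a compactness argument (Proposition \ref{prop:4.7}). The differences are worth recording. First, you make the reduction to the derivative-free double-power functional explicit through $w=e^{-icx/2}\varphi$; your formula for $\cT_{\omega ,c}$ is correct (the cross terms in $\|\del_x\varphi\|_{L^2}^2$ and $\rbra[i\del_x\varphi ,\varphi]$ cancel), and this is exactly what the paper exploits implicitly via the identity $\cL_{\omega ,c}(\varphi )=\|\del_x(e^{-icx/2}\varphi )\|_{L^2}^2+(\omega -c^2/4)\|\varphi\|_{L^2}^2$ and the definition of $X_{\omega ,c}$; your version is arguably cleaner because the fibering-map analysis (unique critical point of $\lambda\mapsto\cT_{\omega ,c}(\lambda w)$, strict maximum, hence $\tbra[\cN_{\omega ,c}'(w_0),w_0]<0$) becomes transparent, and it is valid under \eqref{eq:4.1} since the coefficient $A=\|\del_x w\|_{L^2}^2+(\omega -c^2/4)\|w\|_{L^2}^2$ is strictly positive for $w\neq 0$ even in the algebraic case. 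Second, for existence of a minimizer you propose symmetrization plus re-projection; this does work here, because rearrangement preserves all the $L^p$ norms in $\cT_{\omega ,c}$ (so the sign of the quartic coefficient $c/8$ is irrelevant), decreases $\cI_{\omega ,c}=\frac14 A+\frac{\gamma}{64}\|\cdot\|_{L^6}^6$, and sends the constraint $\cN_{\omega ,c}(w)=0$ to $\cN_{\omega ,c}(|w|^*)\leq 0$, after which scaling down decreases $\cI_{\omega ,c}$ further. The paper instead uses Lieb's lemma and the Brezis--Lieb lemma to prove the stronger statement that \emph{every} minimizing sequence is precompact up to translations (Proposition \ref{prop:4.7}); that stronger form is not needed for Proposition \ref{prop:4.1} itself but is reused later (Proposition \ref{prop:6.2} and the proof of Theorem \ref{thm:1.12}), so your route proves the proposition but yields less. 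The only caveat is that your step (a) is a plan rather than a proof: you correctly identify it as the analytic core (boundedness of minimizing sequences away from $0$, exclusion of vanishing, and the loss of the mass term in the algebraic case), and both strategies you name (symmetrization, or concentration compactness as in the paper) do close the argument, but as written the existence of $w_0$ is asserted rather than established.
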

Our proof of Proposition \ref{prop:4.1} depends on concentration compactness arguments in \cite{CO06} 
(see \cite{FHI17} for the case $c=2\sqrt{\omega}$). For convenience of notations, we define
\begin{align*}
\cL_{\omega ,c} (\varphi )&:=\|\del_x\varphi\|_{L^2}^2+\omega\|\varphi\|_{L^2}^2
+c\rbra[i\del_x\varphi, \varphi] ,\\
\cI_{\omega ,c}(\varphi )&:=\cS_{\omega,c}(\varphi )-\frac{1}{4}\cK_{\omega ,c}(\varphi )
=\frac{1}{4}\cL_{\omega ,c}(\varphi )+\frac{\gamma}{64}\|\varphi\|_{L^6}^6.
\end{align*}
First we prove the following lemma.
\begin{lemma}
\label{lem:4.2}
Let $\gamma$ and $(\omega ,c)$ satisfy \eqref{eq:4.1}. Then the following properties hold\textup{:}
\begin{enumerate}[\rm  (i)]
\item If $\omega >c^2/4$, there exists $C_1=C_1(\omega ,c)$ such that
\begin{align*}
\cL_{\omega ,c}(\varphi ) \geq C_1\| \varphi\|_{H^1}^2~\text{for}~\varphi\in H^1(\R ).
\end{align*}
\item $\mu (\omega ,c) >0$.

\item If $\varphi \in X_{\omega ,c}$ satisfies $\cK_{\omega ,c}(\varphi )<0$, 
then $\mu (\omega ,c)<\cI_{\omega ,c}(\varphi)$.

\end{enumerate}
\end{lemma}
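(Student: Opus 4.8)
The plan is to exploit the identity $\cS_{\omega,c}=\cI_{\omega,c}$ on the Nehari constraint $\{\cK_{\omega,c}=0\}$ together with the decomposition $\cI_{\omega,c}(\varphi)=\tfrac14\cL_{\omega,c}(\varphi)+\tfrac{\gamma}{64}\|\varphi\|_{L^6}^6$. Under \eqref{eq:4.1} one has $\gamma\geq 0$ and (as part (i) shows) $\cL_{\omega,c}\geq 0$, so both summands of $\cI_{\omega,c}$ are nonnegative; hence $\mu(\omega,c)\geq 0$ is immediate and the real content of (ii) is a \emph{uniform} positive lower bound on the constraint set, which I will obtain from the coercivity in (i) and Gagliardo--Nirenberg estimates by showing that $\varphi$ cannot be arbitrarily small there. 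Part (iii) will follow from a scaling argument on the Nehari manifold.

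For (i), I would argue at the quadratic level. Estimating the cross term by $|c\,\rbra[i\del_x\varphi,\varphi]|\leq |c|\,\|\del_x\varphi\|_{L^2}\|\varphi\|_{L^2}$ reduces the claim to the positive definiteness of the quadratic form $A^2-|c|AB+\omega B^2$ in the variables $A=\|\del_x\varphi\|_{L^2}$, $B=\|\varphi\|_{L^2}$. Viewed as a quadratic in $A$ its discriminant is $(c^2-4\omega)B^2$, which is negative precisely when $\omega>c^2/4$; the form then dominates $C_1(A^2+B^2)=C_1\|\varphi\|_{H^1}^2$, with $C_1>0$ its smallest eigenvalue. (Equivalently, Plancherel gives $\cL_{\omega,c}(\varphi)=\int_{\R}\bigl((\xi-\tfrac c2)^2+\omega-\tfrac{c^2}4\bigr)|\widehat\varphi(\xi)|^2\,d\xi$.)

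For (ii) I separate the two regimes of \eqref{eq:4.1}. When $\omega>c^2/4$ — which covers all of $\gamma=0$ and all of $\gamma>0$ except the algebraic endpoint — the constraint $\cK_{\omega,c}(\varphi)=0$ reads $\cL_{\omega,c}(\varphi)=-\tfrac c2\|\varphi\|_{L^4}^4+\tfrac{3\gamma}{16}\|\varphi\|_{L^6}^6$. Bounding the right-hand side by $\|\varphi\|_{L^4}^4\lesssim\|\varphi\|_{H^1}^4$ and \eqref{eq:3.5}, and the left-hand side from below by (i), yields $C_1\|\varphi\|_{H^1}^2\lesssim\|\varphi\|_{H^1}^4+\|\varphi\|_{H^1}^6$, which forces $\|\varphi\|_{H^1}\geq\rho>0$ on the constraint; hence $\cI_{\omega,c}(\varphi)\geq\tfrac{C_1}4\|\varphi\|_{H^1}^2\geq\tfrac{C_1}4\rho^2>0$. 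The algebraic case $c=2\sqrt\omega$ is the main obstacle, since (i) fails there: writing $w=e^{-ic\cdot/2}\varphi$ one has $\cL_{\omega,c}(\varphi)=\|\del_x w\|_{L^2}^2$, a homogeneous norm carrying no zeroth-order control. Here I would invoke the scale-invariant inequality $\|w\|_{L^6}^6\lesssim\|w\|_{L^4}^{16/3}\|\del_x w\|_{L^2}^{2/3}$ on $\dot H^1(\R)\cap L^4(\R)$ (cf. \cite{FHI17}); a short computation shows that on the constraint $\cI_{\omega,c}$ collapses to $\tfrac13\|\del_x w\|_{L^2}^2+\tfrac c{24}\|w\|_{L^4}^4$, while the constraint combined with the inequality gives $\|\del_x w\|_{L^2}^2\,\|w\|_{L^4}^4\geq\kappa_0>0$, so the arithmetic--geometric mean inequality yields $\cI_{\omega,c}(\varphi)\geq 2\sqrt{(c/72)\,\kappa_0}>0$.

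For (iii), let $\varphi\in X_{\omega,c}$ with $\cK_{\omega,c}(\varphi)<0$; then $\varphi\neq 0$ and $\cL_{\omega,c}(\varphi)>0$ (by (i) when $\omega>c^2/4$, and because $w=e^{-ic\cdot/2}\varphi\in L^4$ is nonconstant when $c=2\sqrt\omega$). I set $g(\lambda):=\lambda^{-2}\cK_{\omega,c}(\lambda\varphi)=\cL_{\omega,c}(\varphi)+\tfrac c2\lambda^2\|\varphi\|_{L^4}^4-\tfrac{3\gamma}{16}\lambda^4\|\varphi\|_{L^6}^6$, so that $g(0)=\cL_{\omega,c}(\varphi)>0$ and $g(1)=\cK_{\omega,c}(\varphi)<0$; the intermediate value theorem supplies $\lambda_0\in(0,1)$ with $\cK_{\omega,c}(\lambda_0\varphi)=0$. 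Since $\lambda\mapsto\cI_{\omega,c}(\lambda\varphi)=\tfrac{\lambda^2}4\cL_{\omega,c}(\varphi)+\tfrac{\lambda^6\gamma}{64}\|\varphi\|_{L^6}^6$ is strictly increasing on $(0,\infty)$, I conclude $\cI_{\omega,c}(\varphi)>\cI_{\omega,c}(\lambda_0\varphi)=\cS_{\omega,c}(\lambda_0\varphi)\geq\mu(\omega,c)$, which is the desired strict inequality.
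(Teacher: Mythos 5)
Your proof is correct, and in two places it genuinely departs from the paper's. For (i) the paper simply cites \cite[Lemma 7 (1)]{CO06}, whereas you supply the (standard) argument via the quadratic form $A^2-|c|AB+\omega B^2$ or Plancherel; for (iii) and for the subcritical case $\omega>c^2/4$ of (ii) your argument coincides with the paper's (constraint identity plus coercivity of $\cL_{\omega,c}$ forces $\|\varphi\|_{H^1}\geq\rho>0$ on the Nehari manifold, then $\cI_{\omega,c}\geq\tfrac{C_1}{4}\rho^2$; and the $\lambda_0\in(0,1)$ rescaling with strict monotonicity of $\lambda\mapsto\cI_{\omega,c}(\lambda\varphi)$). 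The real divergence is the algebraic endpoint $c=2\sqrt{\omega}$ in (ii): the paper argues by contradiction, taking a minimizing sequence with $\cS_{\omega,c}(\varphi_n)\to 0$, extracting $\|\del_x(e^{-icx/2}\varphi_n)\|_{L^2}\to 0$ and $\|\varphi_n\|_{L^6}\to 0$, then using $\|f\|_{L^\infty}^4\leq 4\|f\|_{L^6}^3\|\del_x f\|_{L^2}$ to force $\cK_{\omega,c}(\varphi_n)>0$ for large $n$; you instead give a direct quantitative bound by combining the scale-invariant inequality $\|w\|_{L^6}^6\lesssim\|\del_x w\|_{L^2}^{2/3}\|w\|_{L^4}^{16/3}$ (which indeed follows from the paper's two interpolation inequalities, with constant $2^{4/3}$) with the constraint to get $\|\del_x w\|_{L^2}^2\|w\|_{L^4}^4\geq\kappa_0>0$, and then AM--GM on the reduced functional $\cI_{\omega,c}=\tfrac13\|\del_x w\|_{L^2}^2+\tfrac{c}{24}\|w\|_{L^4}^4$ (both identities check out). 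Your route buys an explicit lower bound for $\mu(\omega,2\sqrt{\omega})$ rather than mere positivity, at the cost of invoking the endpoint Gagliardo--Nirenberg inequality on $\dot H^1(\R)\cap L^4(\R)$; the paper's contradiction argument is softer but yields no quantitative information. The only small points worth making explicit are that $\|w\|_{L^4}\neq 0$ on the Nehari manifold (so you may divide by $\|w\|_{L^4}^{4/3}$ when extracting $\kappa_0$), which holds since $w=0$ is excluded, and that the endpoint case only arises when $\gamma>0$ and $c>0$, so the signs in your AM--GM step are as claimed.
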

\begin{proof}
(i) See Lemma 7 (1) in \cite{CO06}.

(ii) Case 1: $\omega >c^2/4$. 
Let $\varphi\in H^1(\R)\setminus\{0\}$ satisfy $\cK_{\omega ,c}(\varphi )=0$. By (i), (\ref{eq:4.3}) and the Sobolev inequality, there exists $C_2>0$ such that
\begin{align*}
C_1\|\varphi\|_{H^1}^2\leq\cL_{\omega ,c}(\varphi )
&=-\frac{c}{2}\|\varphi\|_{L^4}^4+\frac{3}{16}\gamma\|\varphi\|_{L^6}^6\\
&\leq\frac{|c|}{2}\|\varphi\|_{L^2}\| \varphi\|_{L^6}^3+\frac{3}{16}\gamma\| \varphi\|_{L^6}^6\\
&\leq\frac{C_1}{2}\| \varphi\|_{H^1}^2+C_2\| \varphi\|_{H^1}^6.
\end{align*}
This yields that $\| \varphi\|_{H^1}^4\geq \frac{C_1}{2C_2}$. Hence we have
\begin{align*}
\mu (\omega ,c)&=\inf\l\{ \cI_{\omega ,c}(\varphi ):\varphi \in H^1(\R )\setminus\{0\} 
, \cK_{\omega ,c}(\varphi )=0 \r\}\\
&\geq\frac{1}{4}\inf\l\{ \cL_{\omega ,c}(\varphi ):\varphi \in H^1(\R )\setminus\{0\} 
, \cK_{\omega ,c}(\varphi )=0 \r\}\\
&\geq\frac{C_1}{4}\sqrt{\frac{C_1}{2C_2}} >0.
\end{align*}
Case 2: $c=2\sqrt{\omega}$. In this case we have
\begin{align}
\label{eq:4.5}
\cL_{\omega ,c}(\varphi ) =\l\| \del_x\varphi -\frac{i}{2}c\varphi \r\|_{L^2}^2
 +\l( \omega -\frac{c^2}{4} \r) \| \varphi \|_{L^2}^2=\l\| \del_x\l( e^{-i\frac{cx}{2}}\varphi\r) \r\|_{L^2}^2>0
\end{align}
for $\varphi \in X_{\omega ,c}\setminus\{ 0\}$. This yields that $\mu (\omega , c)\geq 0$. We prove $\mu(\omega ,c) >0$ by contradiction. Assume that $\mu (\omega ,c)=0$. Then one can take the minimizing sequence $\{ \varphi_n\} \subset X_{\omega ,c}\setminus\{ 0\}$ such that
\begin{align}
\label{eq:4.6}
\cS_{\omega ,c} (\varphi_n ) \underset{n\to\infty}{\longrightarrow} 0, ~\text{and}~\cK_{\omega ,c}(\varphi_n )=0~\text{for all}~n\in\N.
\end{align}
Since $\cS_{\omega ,c}$ is rewritten as 
\begin{align}
\label{eq:4.7}
\cS_{\omega ,c}(\varphi )=\frac{1}{4}\cK_{\omega ,c}(\varphi )+\frac{1}{4}\cL_{\omega ,c}(\varphi ) +\frac{\gamma}{64}\| \varphi\|_{L^6}^6,
\end{align}
from (\ref{eq:4.5}) and (\ref{eq:4.6}) we obtain that
\begin{align*}
\l\| \del_x\l( e^{-i\frac{cx}{2}}\varphi_n \r) \r\|_{L^2}, ~\|\varphi_n\|_{L^6} \longrightarrow 0
\end{align*}
as $n\to\infty$. By using an elementary interpolation inequality
\begin{align*}
\| f\|_{L^{\infty}}^4 \leq 4\| f\|_{L^6}^3\| \del_x f\|_{L^2},
\end{align*}
we have $\| \varphi_n\|_{L^{\infty}}\to 0$ as $n\to\infty$. Hence we have
\begin{align*}
0=\cK_{\omega ,c}(\varphi_n )&=\cL_{\omega ,c}(\varphi_n) 
+\frac{c}{2}\| \varphi_n\|_{L^4}^4 -\frac{3}{16}\gamma \| \varphi_n\|_{L^6}^6\\
&\geq\l( \frac{c}{2}-\frac{3}{16}\gamma\| \varphi_n\|_{L^{\infty}}^2\r) \|\varphi_n\|_{L^4}^4
>0
\end{align*}
for large $n\in\N$, which is a contradiction with \eqref{eq:4.6}.

(iii) Let $\varphi\in X_{\omega ,c}\setminus\{ 0\}$ satisfy $\cK_{\omega ,c}(\varphi) <0$. Then there exists a unique $\lambda _0 \in (0,1)$ such that $\cK_{\omega ,c}(\lambda_0\varphi ) =0$. From the definition of $\mu(\omega ,c)$, we have
\begin{align*}
\mu(\omega ,c) \leq \cI_{\omega ,c}(\lambda_0 \varphi )
=\frac{\lambda_0^2}{4}\cL_{\omega ,c}(\varphi )+\frac{\lambda_0^6\gamma}{64}\| \varphi\|_{L^6}^6
<\cI_{\omega ,c}(\varphi ).
\end{align*}
This completes the proof.
\end{proof}
By the standard ODE arguments (see e.g. \cite{C03, FHI17}), we have the following lemma.
\begin{lemma}
\label{lem:4.3}
Let $\gamma$ and $(\omega ,c)$ satisfy \eqref{eq:2.9}. Then we have
\begin{align*}
\scG_{\omega ,c} =\l\{ e^{i\theta}\varphi_{\omega ,c}(\cdot -y):\theta\in[0,2\pi), y\in\R \r\}.
\end{align*}
\end{lemma}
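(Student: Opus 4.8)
The inclusion $\supseteq$ is immediate: $\varphi_{\omega,c}$ solves \eqref{eq:3.7}, i.e. $\cS_{\omega,c}'(\varphi_{\omega,c})=0$, and $\cS_{\omega,c}$ is invariant under the phase rotation $\varphi\mapsto e^{i\theta}\varphi$ and the translation $\varphi\mapsto\varphi(\cdot-y)$, so every $e^{i\theta}\varphi_{\omega,c}(\cdot-y)$ is a nontrivial critical point. The content is the reverse inclusion, which I would prove by reducing \eqref{eq:3.7} to an autonomous real ODE and classifying its solutions that vanish at infinity.

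First I would take $\varphi\in\scG_{\omega,c}$ and set $\Phi:=e^{-icx/2}\varphi$. Absorbing the plane wave exactly as in the passage from \eqref{eq:2.2} to \eqref{eq:2.6}, one checks that $\Phi$ satisfies the real-coefficient equation $-\Phi''+\lambda\Phi+\frac{c}{2}|\Phi|^2\Phi-\frac{3}{16}\gamma|\Phi|^4\Phi=0$ with $\lambda:=\omega-c^2/4\geq 0$. Bootstrapping through this equation upgrades the regularity of $\Phi$ and, in the case $\omega>c^2/4$, gives $\Phi\in H^2(\R)$ together with $\Phi,\Phi'\to 0$ at infinity.

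The key reduction is to show that $\Phi$ has constant phase. Writing $\Phi=\Phi_1+i\Phi_2$, the nonlinearity depends only on $|\Phi|^2$, so both the real and imaginary parts solve the same linear equation $y''=g\,y$ with continuous coefficient $g:=\lambda+\frac{c}{2}|\Phi|^2-\frac{3}{16}\gamma|\Phi|^4$. Their Wronskian is exactly $\im(\overline{\Phi}\Phi')$, which is therefore constant and, by the decay at infinity, vanishes identically (this is the content of \cite[Lemma 2]{CO06}). Hence $\Phi_1,\Phi_2$ are linearly dependent and $\Phi=e^{i\theta_0}R$ for a real-valued $R$ solving the autonomous ODE $-R''+\lambda R+\frac{c}{2}R^3-\frac{3}{16}\gamma R^5=0$. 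This ODE has the first integral $H(R,R')=\frac12(R')^2-\frac{\lambda}{2}R^2-\frac{c}{8}R^4+\frac{\gamma}{32}R^6$, which is constant along trajectories and equals $0$ on any solution vanishing at infinity. The nontrivial decaying orbits are thus the homoclinic loops of $\{H=0\}$ at the origin; the condition \eqref{eq:2.9} is precisely what produces a positive turning point on this level set, and the first integral then determines $R$ by quadrature uniquely up to a translation and an overall sign (cf. \cite{BeL83, C03}). This recovers the explicit profile $R=\pm\Phi_{\omega,c}(\cdot-y)$. Absorbing the sign into $\theta_0$ and using $e^{icx/2}\Phi_{\omega,c}(x-y)=e^{icy/2}\varphi_{\omega,c}(x-y)$, I obtain $\varphi=e^{i\theta}\varphi_{\omega,c}(\cdot-y)$, which is the reverse inclusion.

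The main obstacle is the boundary case $c=2\sqrt{\omega}$ (so $\lambda=0$, which occurs only for $\gamma>0$): here $\Phi$ a priori lies only in the larger space $X_{\omega,c}=\{\,e^{-icx/2}\Phi\in\dot{H}^1\cap L^4\,\}$, its decay is algebraic rather than exponential, and one must justify both $\Phi,\Phi'\to 0$ (needed for the vanishing of the Wronskian) and $H=0$ (needed for the classification) directly from membership in $X_{\omega,c}$ and the equation, rather than from integrability of an exponentially decaying tail. I would handle this by the bootstrap and decay analysis for the algebraic soliton as in \cite{FHI17}, treating $\lambda=0$ as the limiting case of the exponential analysis.
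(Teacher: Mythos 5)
Your proposal is correct and follows exactly the ``standard ODE arguments'' that the paper invokes (with citations to \cite{C03, FHI17}) without writing them out: removal of the plane wave to reach the real-coefficient equation \eqref{eq:2.6}, the Wronskian identity for $\im(\overline{\Phi}\Phi')$ from \cite[Lemma 2]{CO06} to reduce to a real profile, the first integral $H=0$ to classify decaying orbits up to translation and sign, and the separate treatment of the algebraic case $c=2\sqrt{\omega}$ in the space $X_{\omega,c}$ as in \cite{FHI17}. Nothing further is needed.
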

Next we prove the following result.
\begin{lemma}
\label{lem:4.4}
Let $\gamma$ and $(\omega ,c)$ satisfy \eqref{eq:4.1}. Assume that $\scM_{\omega ,c}\neq \emptyset$. Then we have $\scG_{\omega, c}=\scM_{\omega ,c}$. Moreover we have 
$d(\omega ,c)=\mu(\omega ,c)$.
\end{lemma}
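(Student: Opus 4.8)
The plan is to establish the two claims of Lemma~\ref{lem:4.4} separately, leveraging the structural facts already in place: the identification of $\scG_{\omega,c}$ in Lemma~\ref{lem:4.3}, the positivity $\mu(\omega,c)>0$ and the coercivity/contraction properties in Lemma~\ref{lem:4.2}, and the fact (from \eqref{eq:3.7} and \eqref{eq:3.8}) that $\varphi_{\omega,c}$ is itself a critical point of $\cS_{\omega,c}$ lying on the Nehari manifold with $\cS_{\omega,c}(\varphi_{\omega,c})=d(\omega,c)$.

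First I would prove the inclusion $\scM_{\omega,c}\subset\scG_{\omega,c}$. Take a minimizer $\varphi\in\scM_{\omega,c}$, so $\cK_{\omega,c}(\varphi)=0$ and $\cS_{\omega,c}(\varphi)=\mu(\omega,c)$. By the Lagrange multiplier theorem there is $\nu\in\R$ with $\cS_{\omega,c}'(\varphi)=\nu\,\cK_{\omega,c}'(\varphi)$. To eliminate the multiplier I would pair this identity with $\varphi$ and use the homogeneity structure: since $\cK_{\omega,c}(\varphi)=\l.\tfrac{d}{d\lambda}\cS_{\omega,c}(\lambda\varphi)\r|_{\lambda=1}=\tbra[\cS_{\omega,c}'(\varphi),\varphi]$, the left side of the paired identity vanishes, while the right side equals $\nu\,\tbra[\cK_{\omega,c}'(\varphi),\varphi]$. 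Computing $\tbra[\cK_{\omega,c}'(\varphi),\varphi]$ from the explicit form \eqref{eq:4.3} and showing it is strictly negative on the constraint set $\{\cK_{\omega,c}=0,\ \varphi\neq0\}$ (because the quadratic and quartic/sextic terms scale with different powers, so the second derivative of $\lambda\mapsto\cS_{\omega,c}(\lambda\varphi)$ at the nonzero critical point is negative) forces $\nu=0$, hence $\cS_{\omega,c}'(\varphi)=0$ and $\varphi\in\scG_{\omega,c}$.

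Next, for the reverse inclusion and the value identity, I would argue as follows. Every element of $\scG_{\omega,c}$ is of the form $e^{i\theta}\varphi_{\omega,c}(\cdot-y)$ by Lemma~\ref{lem:4.3}, and each such element satisfies $\cK_{\omega,c}=0$ and has the same action $d(\omega,c)$ by phase/translation invariance of $\cS_{\omega,c}$; thus $d(\omega,c)\geq\mu(\omega,c)$. Conversely, having just shown $\scM_{\omega,c}\subset\scG_{\omega,c}$ (and using the standing assumption $\scM_{\omega,c}\neq\emptyset$), any minimizer is some $e^{i\theta}\varphi_{\omega,c}(\cdot-y)$, whence $\mu(\omega,c)=\cS_{\omega,c}(\varphi_{\omega,c})=d(\omega,c)$. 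This simultaneously gives $d(\omega,c)=\mu(\omega,c)$ and the inclusion $\scG_{\omega,c}\subset\scM_{\omega,c}$, since every element of $\scG_{\omega,c}$ lies on the Nehari manifold and achieves the value $d(\omega,c)=\mu(\omega,c)$. Combining the two inclusions yields $\scG_{\omega,c}=\scM_{\omega,c}$.

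The main obstacle I anticipate is the elimination of the Lagrange multiplier, i.e.\ verifying rigorously that $\tbra[\cK_{\omega,c}'(\varphi),\varphi]\neq0$ on the constraint surface. This amounts to checking that the Nehari manifold is a genuine $C^1$ constraint (the derivative of $\cK_{\omega,c}$ along the radial direction does not degenerate), which is where the mass-critical scaling and the sign of $\gamma$ enter; the case $c=2\sqrt{\omega}$, where one works in the space $X_{\omega,c}$ rather than $H^1(\R)$, will need the homogeneity computation to be carried out in the $X_{\omega,c}$-norm using \eqref{eq:4.5}. Aside from this point, the argument is essentially bookkeeping built on Lemmas~\ref{lem:4.2} and~\ref{lem:4.3}.
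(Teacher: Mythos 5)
Your proposal is correct and follows essentially the same route as the paper: eliminate the Lagrange multiplier by pairing with $\varphi$ and showing $\tbra[\cK_{\omega,c}'(\varphi),\varphi]<0$ on the Nehari manifold (which, after substituting $\cK_{\omega,c}(\varphi)=0$, reduces to $-2\cL_{\omega,c}(\varphi)-\tfrac{3}{8}\gamma\|\varphi\|_{L^6}^6<0$ using $\cL_{\omega,c}>0$ from Lemma~\ref{lem:4.2}(i) or \eqref{eq:4.5} and $\gamma\geq0$), then invoke Lemma~\ref{lem:4.3} together with $\scM_{\omega,c}\neq\emptyset$ to identify the minimizers with $e^{i\theta}\varphi_{\omega,c}(\cdot-y)$ and conclude $d(\omega,c)=\mu(\omega,c)$. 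The point you flag as the main obstacle is exactly the computation the paper carries out, and it goes through as you anticipate.
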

\begin{proof}
First we prove $\scM_{\omega ,c}\subset \scG_{\omega ,c}$. Let $\varphi \in\scM_{\omega ,c}$. Since $\varphi$ is a minimizer on the Nehari manifold, there exists a Lagrange multiplier $\eta\in\R$ such that $\cS_{\omega ,c}'(\varphi )=\eta\cK_{\omega ,c}'(\varphi )$. Thus we have
\begin{align*}
0=\cK_{\omega ,c}(\varphi )=\tbra[\cS_{\omega ,c}'(\varphi ), \varphi] =\eta\tbra[\cK_{\omega ,c}'(\varphi ), \varphi ].
\end{align*}
By $\cK_{\omega ,c}(\varphi )=0$ and $\varphi\neq 0$, we have
\begin{align*}
\tbra[\cK_{\omega ,c}'(\varphi ),\varphi ]&=2\cL_{\omega ,c}(\varphi )+2c\|\varphi\|_{L^4}^4
-\frac{9}{8}\gamma\| \varphi\|_{L^6}^6\\
&=-2\cL_{\omega ,c}(\varphi )-\frac{3}{8}\gamma\|\varphi\|_{L^6}^6<0.
\end{align*}
This yields that $\eta=0$ and $\varphi\in\scG_{\omega ,c}$, which implies $\scM_{\omega ,c}\subset \scG_{\omega ,c}$. Conversely, let $\varphi\in\scG_{\omega ,c}$. By Lemma \ref{lem:4.3}, there exist $\theta_0\in[0,2\pi )$ and $y_0\in\R$ such that $\varphi =e^{i\theta_0}\varphi_{\omega ,c}(\cdot -y_0)$. Since $\scM_{\omega ,c}\neq\emptyset$, we can take some $\psi\in\scM_{\omega ,c}$. By $\scM_{\omega ,c}\subset \scG_{\omega ,c}$ and Lemma \ref{lem:4.3}, there exist $\theta_1\in[0,2\pi )$ and $y_1\in\R$ such that $\psi =e^{i\theta_1}\varphi_{\omega ,c}(\cdot -y_1)$. Thus we have
\begin{align*}
\cS_{\omega ,c}(\varphi ) =\cS_{\omega ,c}(\varphi_{\omega ,c}) 
=\cS_{\omega ,c}(\psi ) = \mu (\omega ,c).
\end{align*}
This yields that $\varphi\in\scM_{\omega ,c}$ since $\cK_{\omega ,c}(\varphi )=\tbra[\cS_{\omega ,c}'(\varphi ), \varphi]=0$. This completes the proof. 
\end{proof}
To complete the proof of Proposition \ref{prop:4.1}, we need to prove that $\scM_{\omega ,c} \neq\emptyset$. 
To this end, we prepare two useful lemmas on concentration compactness.
\begin{lemma}[\cite{Lie83, BFV14}]
\label{lem:4.5}
Let $p\geq 2$. Let $\{f_n\}$ be a bounded sequence in $\dot{H}^1(\R)\cap L^{p}(\R)$. Assume that there exists $q\in(p,\infty)$ such that $\limsup_{n \to \infty} \norm[f_n]_{L^q}>0$.
Then, there exist $\{y_n\}\subset\R$ and $f \in \dot{H}^1(\R)\cap L^{p}(\R)\setminus \{0\}$ such that $\{f_n(\cdot-y_n)\}$ has a subsequence that converges to $f$ weakly in $\dot{H}^1(\R)\cap L^{p}(\R)$. 
\end{lemma}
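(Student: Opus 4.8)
The plan is to treat this as a standard concentration--compactness (Lieb-type translation) lemma, whose core is a vanishing alternative adapted to the space $\dot{H}^1(\R)\cap L^{p}(\R)$. First I would decompose $\R$ into the unit intervals $Q_k:=[k,k+1)$, $k\in\Z$, and reduce the statement to the claim that the hypothesis $\limsup_n\norm[f_n]_{L^q}>0$ forces the local $L^p$-mass not to vanish, i.e. $\limsup_n\sup_{k\in\Z}\norm[f_n]_{L^p(Q_k)}>0$. Granting this, the selection of the translations and the nonzero weak limit are routine.

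The heart of the argument (and the main obstacle) is the vanishing lemma: if $\lambda_n:=\sup_k\norm[f_n]_{L^p(Q_k)}\to 0$, then $\norm[f_n]_{L^q}\to 0$. I would prove it by localizing to each $Q_k$ the one-dimensional Gagliardo--Nirenberg inequality $\norm[f]_{L^\infty(Q_k)}\le C\norm[f]_{L^2(Q_k)}^{1/2}\norm[f]_{H^1(Q_k)}^{1/2}$, combined with the elementary bound $\norm[f]_{L^q(Q_k)}^q\le\norm[f]_{L^\infty(Q_k)}^{q-p}\norm[f]_{L^p(Q_k)}^p$. Because $p\ge 2$, on a unit interval $\norm[f]_{L^2(Q_k)}\le\norm[f]_{L^p(Q_k)}$, so that $\sup_k\norm[f_n]_{H^1(Q_k)}\le C$ uniformly in $n$, using the global control on $\norm[\del_x f_n]_{L^2}$. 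Feeding these estimates in, each local term is controlled by $\lambda_n^{(q-p)/2}\norm[f_n]_{L^p(Q_k)}^p$ up to a constant, and summing over $k$ yields $\norm[f_n]_{L^q}^q\cleq\lambda_n^{(q-p)/2}\norm[f_n]_{L^p}^p\to 0$. The one point requiring care is that the uniform local $H^1$-bound is what allows raising $\norm[f_n]_{H^1(Q_k)}$ to the (possibly large) power $(q-p)/2$ while keeping it a fixed constant, so that the sum converges for every $q\in(p,\infty)$.

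With the non-vanishing of the local $L^p$-mass in hand, I would pass to a subsequence and choose $k_n\in\Z$ with $\norm[f_n]_{L^p(Q_{k_n})}^p\ge\eps$ for some fixed $\eps>0$. Setting $y_n:=-k_n$ and $g_n:=f_n(\cdot-y_n)$ then gives $\int_{[0,1)}|g_n|^p\ge\eps$. Since $\dot{H}^1(\R)\cap L^p(\R)$ is a reflexive Banach space and $\{g_n\}$ is bounded, a subsequence converges weakly to some $f$, i.e. $g_n\wto f$ in $\dot{H}^1(\R)\cap L^p(\R)$; this $f$ and these $y_n$ are the objects claimed in the statement.

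Finally I would verify $f\neq 0$. Restricted to the bounded interval $[0,1)$, $\{g_n\}$ is bounded in $H^1([0,1))$ (again using $p\ge 2$ to absorb $\norm[g_n]_{L^2([0,1))}\le\norm[g_n]_{L^p([0,1))}$), so by Rellich's compact embedding $H^1([0,1))\hookrightarrow\hookrightarrow L^p([0,1))$ a further subsequence converges strongly in $L^p([0,1))$; identifying this strong limit with the weak limit $f$ gives $\int_{[0,1)}|f|^p=\lim_n\int_{[0,1)}|g_n|^p\ge\eps>0$, whence $f\neq 0$. This completes the proof. I expect essentially all of the analytic content to sit in the vanishing lemma of the second paragraph, with the weak-compactness and Rellich steps being standard.
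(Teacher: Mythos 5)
Your proof is correct; since the paper simply cites \cite{Lie83, BFV14} for this lemma without reproducing an argument, there is nothing to diverge from, and your unit-interval decomposition, the local Gagliardo--Nirenberg vanishing estimate $\norm[f_n]_{L^q}^q\cleq \lambda_n^{(q-p)/2}\norm[f_n]_{L^p}^p$, and the Rellich step to rule out a zero weak limit constitute the standard proof of exactly this translation-compactness statement. The exponent bookkeeping and the use of $p\geq 2$ to get the uniform local $H^1$ bound from the global $\dot{H}^1\cap L^p$ bound are handled correctly, so no gaps remain.
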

\begin{lemma}[{\cite{BL83}}]
\label{lem:4.6}
Let $1\leq p < \infty$. Let $\{f_n\}$ be a bounded sequence in $L^p(\R)$ and $f_n \to f$ a.e. in $\R$ as $n\to \infty$. Then we have 
\begin{align*}
\| f_n\|_{L^p}^p - \| f_n-f\|_{L^p}^p - \| f \|_{L^p}^p \to 0
\end{align*}
as $n \to \infty$. 
\end{lemma}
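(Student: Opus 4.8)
The plan is to reduce everything to a pointwise statement and then pass to the integral by a dominated-convergence argument. Writing $g_n := \big| \, |f_n|^p - |f_n - f|^p - |f|^p \, \big|$, it suffices to show $\int_{\R} g_n \to 0$, since
\[
\big| \, \|f_n\|_{L^p}^p - \|f_n-f\|_{L^p}^p - \|f\|_{L^p}^p \, \big| \le \int_{\R} g_n .
\]
First I would record that Fatou's lemma applied to $f_n \to f$ a.e.\ gives $\|f\|_{L^p}^p \le \liminf_n \|f_n\|_{L^p}^p < \infty$, so $|f|^p \in L^1(\R)$; this fixed integrable majorant is what the whole argument rests on.

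The key elementary input is the inequality: for every $\eps > 0$ there is a constant $C_\eps > 0$ with
\[
\big| \, |a+b|^p - |a|^p \, \big| \le \eps\,|a|^p + C_\eps\,|b|^p \qquad \text{for all } a,b \in \C .
\]
I would obtain this from the continuity and $p$-homogeneity of $(a,b)\mapsto |a+b|^p-|a|^p$ together with Young's inequality, verifying it first on the compact set $\{\,|a|+|b|=1\,\}$ and then rescaling. Applying it with $a = f_n - f$ and $b = f$ yields pointwise
\[
g_n \le \eps\,|f_n-f|^p + (C_\eps + 1)\,|f|^p .
\]

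The decisive step is a truncation: set $W_{n,\eps} := \big( g_n - \eps\,|f_n - f|^p \big)^{+}$, so that $0 \le W_{n,\eps} \le (C_\eps+1)\,|f|^p$. Since $f_n \to f$ a.e.\ we have $g_n \to 0$ a.e., hence $W_{n,\eps} \to 0$ a.e.; being dominated by the fixed integrable function $(C_\eps+1)|f|^p$, dominated convergence gives $\int_{\R} W_{n,\eps} \to 0$ as $n\to\infty$. Combining $g_n \le W_{n,\eps} + \eps\,|f_n - f|^p$ with the uniform bound $M := \sup_n \|f_n - f\|_{L^p}^p < \infty$ (a consequence of the assumed boundedness of $\{f_n\}$ in $L^p$), I obtain $\limsup_n \int_{\R} g_n \le \eps\,M$, and letting $\eps \downarrow 0$ completes the proof.

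The main obstacle is precisely this truncation. The crude pointwise bound $g_n \le \eps|f_n-f|^p + (C_\eps+1)|f|^p$ cannot be integrated term by term, because $\eps\,|f_n - f|^p$ carries no $n$-uniform integrable majorant, while $g_n$ itself is not obviously dominated either. Peeling off the excess $W_{n,\eps}$ isolates exactly the portion that is simultaneously $n$-dominated by $|f|^p$ and vanishing almost everywhere, so that dominated convergence can be applied; the leftover term is then controlled uniformly in $n$ by $\eps M$, and the freedom to send $\eps \to 0$ is what upgrades this to the sharp cancellation asserted in the lemma.
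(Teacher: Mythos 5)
Your argument is correct: it is precisely the classical proof of Br\'ezis and Lieb from the reference \cite{BL83} that the paper cites for this lemma (the paper itself supplies no proof), namely the elementary inequality $\bigl||a+b|^p-|a|^p\bigr|\le\eps|a|^p+C_\eps|b|^p$ applied with $a=f_n-f$, $b=f$, followed by the truncation $W_{n,\eps}=(g_n-\eps|f_n-f|^p)^+$, dominated convergence against the fixed majorant $(C_\eps+1)|f|^p$ (integrable by Fatou), and the limit $\eps\downarrow 0$. The only place requiring a little more care than you indicate is the elementary inequality itself, where the homogeneity/compactness argument works because the positive part $\bigl(\,\bigl||a+b|^p-|a|^p\bigr|-\eps|a|^p\bigr)^+$ vanishes near $b=0$; this is standard and your sketch is acceptable.
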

The assertion $\scM_{\omega ,c}\neq\emptyset$ follows from the following stronger claim.
\begin{proposition}
\label{prop:4.7}
Let $\gamma$ and $(\omega ,c)$ satisfy \eqref{eq:4.1}. If a sequence $
\thickmuskip=3mu \{\varphi_n\}\subset X_{\omega,c}$ satisfies
\begin{align}
\label{eq:4.8}
\cS_{\omega ,c}(\varphi_n ) \to\mu (\omega ,c)~\text{and}~\cK_{\omega ,c}(\varphi_n ) \to 0~\text{as}~n\to\infty ,
\end{align}
then there exist a sequence $\{ y_n\}\subset\R$ and $v\in\scM_{\omega ,c}$ such that $\{ \varphi_n(\cdot -y_n)\}$ has a subsequence that converges to $v$ strongly in $X_{\omega ,c}$.
\end{proposition}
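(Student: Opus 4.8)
The plan is to run a concentration--compactness argument built on the decomposition lemmas (Lemma \ref{lem:4.5} and Lemma \ref{lem:4.6}) together with the structural facts in Lemma \ref{lem:4.2}, writing throughout $g_n := e^{-icx/2}\varphi_n$, so that by \eqref{eq:4.5} one has $\cL_{\omega ,c}(\varphi_n) =\|\del_x g_n\|_{L^2}^2+(\omega -c^2/4)\|g_n\|_{L^2}^2$, and recalling $\cI_{\omega ,c}=\frac14\cL_{\omega ,c}+\frac{\gamma}{64}\|\cdot\|_{L^6}^6$. First I would prove boundedness: from \eqref{eq:4.8} and $\cI_{\omega ,c}=\cS_{\omega ,c}-\frac14\cK_{\omega ,c}$ one gets $\cI_{\omega ,c}(\varphi_n)\to\mu(\omega ,c)$, and since $\gamma\geq 0$ under \eqref{eq:4.1} both summands of $\cI_{\omega ,c}$ are nonnegative, so $\cL_{\omega ,c}(\varphi_n)$ and $\|\varphi_n\|_{L^6}$ stay bounded. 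If $\omega>c^2/4$, Lemma \ref{lem:4.2}(i) bounds $\{\varphi_n\}$ in $H^1(\R)=X_{\omega ,c}$; if $c=2\sqrt\omega$, the bound on $\cL_{\omega ,c}(\varphi_n)=\|\del_x g_n\|_{L^2}^2$ together with an interpolation of the type $\|f\|_{L^4}^4\lesssim\|f\|_{L^6}^{15/4}\|\del_x f\|_{L^2}^{1/4}$ (combine $\|f\|_{L^4}^4\leq\|f\|_{L^{\infty}}\|f\|_{L^6}^3$ with $\|f\|_{L^{\infty}}^4\leq 4\|f\|_{L^6}^3\|\del_x f\|_{L^2}$ as in the proof of Lemma \ref{lem:4.2}) bounds $\{\varphi_n\}$ in $X_{\omega ,c}$. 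For nonvanishing I claim $\limsup_n\|g_n\|_{L^6}=\limsup_n\|\varphi_n\|_{L^6}>0$: otherwise $\|\varphi_n\|_{L^6}\to 0$, hence $\|\varphi_n\|_{L^4}\to 0$ as well, so $\cK_{\omega ,c}(\varphi_n)=\cL_{\omega ,c}(\varphi_n)+o(1)$ and $\cS_{\omega ,c}(\varphi_n)=\frac14\cL_{\omega ,c}(\varphi_n)+o(1)$, which forces $\mu(\omega ,c)=\frac14\lim\cL_{\omega ,c}(\varphi_n)=\frac14\lim\cK_{\omega ,c}(\varphi_n)=0$, contradicting Lemma \ref{lem:4.2}(ii).

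Next I would extract a profile. The sequence $\{g_n\}$ is bounded in $\dot H^1(\R)\cap L^p(\R)$ with $p=2$ if $\omega>c^2/4$ and $p=4$ if $c=2\sqrt\omega$, and $\limsup_n\|g_n\|_{L^6}>0$ with $6\in(p,\infty)$. Lemma \ref{lem:4.5} then produces $\{y_n\}\subset\R$ and a nonzero $w\in\dot H^1\cap L^p$ with, along a subsequence, $g_n(\cdot -y_n)\rightharpoonup w$ weakly in $\dot H^1\cap L^p$. Since $\cS_{\omega ,c}$, $\cK_{\omega ,c}$ and $\cL_{\omega ,c}$ are invariant under spatial translation and under multiplication by a constant phase, I may replace $\varphi_n$ by the normalized sequence $e^{icy_n/2}\varphi_n(\cdot -y_n)$ without changing \eqref{eq:4.8}; setting $v:=e^{icx/2}w\in X_{\omega ,c}\setminus\{0\}$ one then has $e^{-icx/2}\varphi_n\rightharpoonup e^{-icx/2}v$ weakly in $\dot H^1\cap L^p$. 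A local Rellich argument (the sequence is bounded in $H^1$ on every bounded interval) gives, after a further subsequence, $\varphi_n\to v$ a.e.\ on $\R$.

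The crux is the identification of $v$ as a minimizer and the upgrade to strong convergence. Because $\cL_{\omega ,c}$ is the square of a Hilbert norm in the $g$-variable and $g_n\rightharpoonup w$ weakly there, while Lemma \ref{lem:4.6} applies to the $L^4$- and $L^6$-terms through the a.e.\ convergence, each of $\cL_{\omega ,c}$, $\|\cdot\|_{L^4}^4$, $\|\cdot\|_{L^6}^6$ splits as the value at $v$ plus the value at $\varphi_n-v$ plus $o(1)$; hence so do $\cK_{\omega ,c}$, $\cS_{\omega ,c}$ and $\cI_{\omega ,c}$. Weak lower semicontinuity gives $\cI_{\omega ,c}(v)\leq\liminf\cI_{\omega ,c}(\varphi_n)=\mu(\omega ,c)$, so if $\cK_{\omega ,c}(v)<0$ Lemma \ref{lem:4.2}(iii) would yield $\mu<\cI_{\omega ,c}(v)\leq\mu$, a contradiction; thus $\cK_{\omega ,c}(v)\geq 0$. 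If instead $\cK_{\omega ,c}(v)>0$, then $\cK_{\omega ,c}(\varphi_n-v)\to-\cK_{\omega ,c}(v)<0$, so Lemma \ref{lem:4.2}(iii) applied to $\varphi_n-v$ together with the splitting $\cI_{\omega ,c}(\varphi_n-v)=\cI_{\omega ,c}(\varphi_n)-\cI_{\omega ,c}(v)+o(1)\to\mu-\cI_{\omega ,c}(v)$ forces $\cI_{\omega ,c}(v)\leq 0$, hence $v=0$, again a contradiction. Therefore $\cK_{\omega ,c}(v)=0$, so $v$ is admissible and $\cS_{\omega ,c}(v)=\cI_{\omega ,c}(v)\leq\mu(\omega ,c)$ shows $v\in\scM_{\omega ,c}$. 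Finally $\cI_{\omega ,c}(\varphi_n-v)\to\mu-\cI_{\omega ,c}(v)=0$, and since both summands of $\cI_{\omega ,c}$ are nonnegative this forces $\cL_{\omega ,c}(\varphi_n-v)\to 0$ and $\|\varphi_n-v\|_{L^6}\to 0$. For $\omega>c^2/4$ Lemma \ref{lem:4.2}(i) upgrades this to $\varphi_n\to v$ in $H^1=X_{\omega ,c}$; for $c=2\sqrt\omega$ one has $\|\del_x(e^{-icx/2}(\varphi_n-v))\|_{L^2}\to 0$, and the interpolation of the first step applied to $\varphi_n-v$ gives $\|\varphi_n-v\|_{L^4}\to 0$ as well, i.e.\ $\varphi_n\to v$ in $X_{\omega ,c}$.

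The main obstacle is this last step, where one must simultaneously exclude the escape of mass into a nontrivial remainder (dichotomy) and show that the weak limit is an \emph{exact} minimizer with $\cK_{\omega ,c}(v)=0$. Both are achieved through the strict inequality in Lemma \ref{lem:4.2}(iii), which plays the role of strict subadditivity in the concentration--compactness method; the Hilbert-space splitting of $\cL_{\omega ,c}$ and the Brezis--Lieb splitting of the $L^4$- and $L^6$-norms are precisely what make that inequality usable on the remainder $\varphi_n-v$. The secondary technical point is the bookkeeping of the gauge phase $e^{-icx/2}$ and of the extra $L^4$-component of the $X_{\omega ,c}$-norm in the critical case $c=2\sqrt\omega$.
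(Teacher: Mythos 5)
Your proposal is correct and follows essentially the same concentration--compactness scheme as the paper's proof: boundedness and nonvanishing via Lemma \ref{lem:4.2}, profile extraction via Lemma \ref{lem:4.5}, Brezis--Lieb splitting via Lemma \ref{lem:4.6}, exclusion of $\cK_{\omega ,c}(v)\neq 0$ through the strict inequality of Lemma \ref{lem:4.2}(iii) combined with weak lower semicontinuity, and the same upgrade to strong convergence from $\cI_{\omega ,c}(\varphi_n-v)\to 0$. The only deviations are cosmetic (obtaining the $L^4$-bound by interpolation rather than from the identity for $\cK_{\omega ,c}$, and the explicit handling of the constant phase $e^{-icy_n/2}$ and of the $L^4$-component of the $X_{\omega ,c}$-norm in the case $c=2\sqrt{\omega}$, which the paper treats more tersely).
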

\begin{remark}
\label{rem:4.8}
If we only prove that $\scM_{\omega ,c}\neq\emptyset$, we may assume that $\cK_{\omega ,c}(\varphi_n)=0$ for all $n\in\N$. However, when one studies stability problems around the solitons, it is essential to consider the minimizing sequence $\{ \varphi_n\}$ satisfying $\cK_{\omega ,c}(\varphi_n)\neq 0$; see \cite{CO06, H19} or Section \ref{sec:6}.
\end{remark}
\begin{proof}
{\bf Step 1.} $\{ \varphi_n\}$ is bounded in $X_{\omega ,c}$. If $\omega >c^2/4$, this follows from (\ref{eq:4.7}) and Lemma \ref{lem:4.2} (i). If $c=2\sqrt{\omega}$, from (\ref{eq:4.5}) and \eqref{eq:4.7} we obtain that
\begin{align*}
\sup_{n\in\N}\|\varphi_n\|_{L^6}^6,~
\sup_{n\in\N}\|\del_x\l(e^{-i\frac{cx}{2}}\varphi_n\r)\|_{L^2}^2 <\infty .
\end{align*}
Since we have
\begin{align}
\label{eq:4.9}
\cK_{\omega ,c}(\varphi_n )&=\cL_{\omega ,c} (\varphi_n )
+\frac{c}{2}\| \varphi_n\|_{L^4}^4 -\frac{3}{16}\gamma \| \varphi_n\|_{L^6}^6,
\end{align}
we deduce that $\{ \varphi_n\}$ is also bounded in $L^4(\R)$.\\
{\bf Step 2.} $\limsup_{n\to\infty}\| \varphi_n\|_{L^6} >0$. Suppose that $\lim_{n\to\infty}\| \varphi_n\|_{L^6}=0$. If $\omega >c^2/4$, by the boundedness of $\{ \varphi_n\}$ in $L^2(\R)$ we have
\begin{align*}
\| \varphi_n\|_{L^4}^4 \leq\|\varphi_n\|_{L^2}\|\varphi_n\|_{L^6}^3 \underset{n\to\infty}{\longrightarrow} 0.
\end{align*}
From (\ref{eq:4.9}) we deduce that $\cL_{\omega ,c} (\varphi_n )\to 0$. By (\ref{eq:4.7}), we have $\cS_{\omega ,c}(\varphi_n) \to 0$, but this gives a contradiction with $\mu (\omega ,c)>0$. If $c=2\sqrt{\omega}$, from (\ref{eq:4.9}) we obtain that
\begin{align*}
\cL_{\omega ,c}(\varphi_n ), ~\|\varphi_n\|_{L^4}^4 \underset{n\to\infty}{\longrightarrow} 0,
\end{align*}
which yields $\cS_{\omega ,c}(\varphi_n) \to 0$ again. This gives a contradiction.\\
{\bf Step 3.} By Step 1, Step 2 and Lemma \ref{lem:4.5}, there exist $\{ y_n\}\subset\R$ and $v \in X_{\omega ,c}\setminus\{ 0\}$ such that a subsequence of $\{\varphi(\cdot -y_n)\}$ (we denote it by $\{ v_n\}$) converges to $v$ weakly in $X_{\omega ,c}$. Taking a subsequence if necessary, we have $v_n \to v$ a.e. in $\R$. 
By applying Lemma \ref{lem:4.6}, we have
\begin{align}
\label{eq:4.10}
&\cK_{\omega ,c}(v_n)-\cK_{\omega ,c}(v_n-v)-\cK_{\omega ,c}(v)\longrightarrow 0,\\
\label{eq:4.11}
&\cI_{\omega ,c}(v_n)-\cI_{\omega ,c}(v_n-v)-\cI_{\omega ,c}(v)\longrightarrow 0,
\end{align}
as $n\to\infty$.\\
{\bf Step 4.} $\cK_{\omega ,c}(v)\leq 0$. Suppose that $\cK_{\omega ,c}(v) >0$. By $\cK_{\omega ,c}(v_n) \to 0$ and \eqref{eq:4.10}, we have 
\begin{align*}
\cK_{\omega ,c}(v_n -v) \to -\cK_{\omega ,c}(v) <0.
\end{align*}
This implies that $\cK_{\omega ,c}(v_n -v)<0$ for large $n\in\N$. Applying Lemma \ref{lem:4.2} (iii), we have $\mu(\omega ,c)<\cI_{\omega ,c}(v_n -v)$ for large $n\in\N$. By \eqref{eq:4.8}
we have $\cI_{\omega ,c}(v_n) \to\mu (\omega ,c)$. Combined with (\ref{eq:4.11}), we have
\begin{align*}
 \cI_{\omega ,c}(v) =\lim_{n\to\infty}\l\{\cI_{\omega ,c}(v_n) -\cI_{\omega ,c}(v_n-v)\r\}
 \leq \mu(\omega ,c) -\mu (\omega ,c) =0,
 \end{align*}
 which yields that $v=0$. This is a contradiction.\\
{\bf Step 5.} By Step 4, Lemma \ref{lem:4.2} (iii), and the weakly lower semicontinuity of $\cI_{\omega ,c}$, we have
\begin{align*}
\mu (\omega ,c) \leq \cI_{\omega ,c} (v) \leq \liminf_{n\to\infty} \cI_{\omega ,c} (v_n) =\mu (\omega ,c).
\end{align*}
Thus we have $\cI_{\omega ,c}(v)=\mu (\omega ,c)$. By Step 4 and Lemma \ref{lem:4.2} (iii), we have $\cK_{\omega ,c}(v) =0$. Therefore $v\in\scM_{\omega ,c}$. By (\ref{eq:4.11}) and $\cI_{\omega ,c}(v)=\mu (\omega ,c)$, we have $\cI_{\omega ,c}(v_n -v) \to 0$, which yields that $v_n\to v$ strongly in $X_{\omega ,c}$. This completes the proof.
\end{proof}
\section{A two-parameter family of potential wells}
\label{sec:5}
In this section we prove Theorem \ref{thm:1.7}. We recall the following subsets of the energy space: 
\begin{align*}
\scA_{\omega ,c}=& \l\{ \varphi\in H^1(\R ):  \cS_{\omega, c}(\varphi ) < d(\omega ,c)\r\}, \\ 
\scA_{\omega, c}^+ =&\l\{ \varphi\in\scA_{\omega, c} :\cK_{\omega, c}(\varphi) \geq 0 \r\},\\
\scA_{\omega, c}^- =&\l\{ \varphi\in\scA_{\omega, c} :\cK_{\omega ,c}(\varphi) < 0\r\}.
\end{align*}
First we prove that $\scA_{\omega ,c}^{\pm}$ is invariant under the flow of \eqref{ME}. 
\begin{lemma}
\label{lem:5.1}
Let $b \geq -3/16$ and $(\omega,c)$ satisfy \eqref{eq:4.1}. Then, each of $\scA_{\omega ,c}^{+}$ and 
$\scA_{\omega ,c}^{-}$ is invariant under the flow of \eqref{ME}. If the initial data $v_0 \in\scA_{\omega ,c}^+$, then the corresponding solution is global, and satisfies the following uniform estimate\textup{:}
\begin{align}
\label{eq:5.1}
\| \del_x v\|_{L^{\infty}(\R ,L^2)}^2 \leq 8\cS_{\omega ,c}(v_0)+\frac{c^2}{2}\cM (v_0).
\end{align}
\end{lemma}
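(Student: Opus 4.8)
The plan is to deduce everything from two ingredients already in hand: the conservation of $\cS_{\omega ,c}$, $\cM$ and $\cP$ along the flow of \eqref{ME} (Theorem~\ref{thm:3.1}(ii), since $\cS_{\omega ,c}$ is a fixed linear combination of $\cE$, $\cM$, $\cP$), and the variational identity $d(\omega ,c)=\mu (\omega ,c)$ from Proposition~\ref{prop:4.1}. First I would record the decomposition
\begin{align*}
\cS_{\omega ,c}(\varphi)=\tfrac14\cK_{\omega ,c}(\varphi)+\cI_{\omega ,c}(\varphi),\qquad \cI_{\omega ,c}(\varphi)=\tfrac14\cL_{\omega ,c}(\varphi)+\tfrac{\gamma}{64}\|\varphi\|_{L^6}^6,
\end{align*}
together with the square completion
\begin{align*}
\cL_{\omega ,c}(\varphi)=\l\| \del_x\varphi-\tfrac{ic}{2}\varphi\r\|_{L^2}^2+\l(\omega-\tfrac{c^2}{4}\r)\|\varphi\|_{L^2}^2,
\end{align*}
which is nonnegative since $\omega-c^2/4\geq 0$ under \eqref{eq:4.1}. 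Because $b\geq -3/16$ forces $\gamma\geq 0$, the $L^6$ term in $\cI_{\omega ,c}$ is nonnegative as well.

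For the invariance the decisive observation is that $\cK_{\omega ,c}$ cannot vanish on a nonzero element of $\scA_{\omega ,c}$: if $\varphi\in\scA_{\omega ,c}\setminus\{0\}$ had $\cK_{\omega ,c}(\varphi)=0$, then $\varphi$ would be admissible in the problem defining $\mu(\omega ,c)$, whence $\cS_{\omega ,c}(\varphi)\geq\mu (\omega ,c)=d(\omega ,c)$, contradicting $\cS_{\omega ,c}(\varphi)<d(\omega ,c)$. Thus $\scA_{\omega ,c}^+\setminus\{0\}=\{\varphi\in\scA_{\omega ,c}:\cK_{\omega ,c}(\varphi)>0\}$, and $0\in\scA_{\omega ,c}^+$ by $\mu(\omega ,c)>0$ (Lemma~\ref{lem:4.2}(ii)). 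Now take $v_0\in\scA_{\omega ,c}^+$: if $v_0=0$ the solution is identically $0$ and there is nothing to prove, so assume $v_0\neq 0$, in which case uniqueness forbids $v(t)=0$. Conservation of $\cS_{\omega ,c}$ keeps $v(t)\in\scA_{\omega ,c}$, and since $t\mapsto\cK_{\omega ,c}(v(t))$ is continuous (the solution lies in $C(I,H^1)$ and the $L^4,L^6$ terms are continuous on $H^1$ by Sobolev embedding), it cannot change sign without passing through a zero, which is impossible on $\scA_{\omega ,c}\setminus\{0\}$. Hence $\cK_{\omega ,c}(v(t))>0$ for all $t$, so $v(t)\in\scA_{\omega ,c}^+$. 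The same continuity argument applied to $v_0\in\scA_{\omega ,c}^-$ (where $\cK_{\omega ,c}(v_0)<0$, and necessarily $v_0\neq 0$) shows $\scA_{\omega ,c}^-$ is invariant too.

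For the global bound, fix $v_0\in\scA_{\omega ,c}^+$ and let $v$ be the solution on its maximal interval. By invariance $\cK_{\omega ,c}(v(t))\geq 0$, so the decomposition together with $\gamma\geq 0$ gives
\begin{align*}
4\cS_{\omega ,c}(v_0)=4\cS_{\omega ,c}(v(t))\geq\cL_{\omega ,c}(v(t))\geq\l\|\del_x v(t)-\tfrac{ic}{2}v(t)\r\|_{L^2}^2.
\end{align*}
By the triangle inequality and conservation of mass,
\begin{align*}
\|\del_x v(t)\|_{L^2}^2\leq 2\l\|\del_x v(t)-\tfrac{ic}{2}v(t)\r\|_{L^2}^2+\tfrac{c^2}{2}\|v(t)\|_{L^2}^2\leq 8\cS_{\omega ,c}(v_0)+\tfrac{c^2}{2}\cM (v_0),
\end{align*}
which is precisely \eqref{eq:5.1}. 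As the right-hand side is a finite constant, $\|\del_x v(t)\|_{L^2}$ stays bounded, and the blow-up alternative of Theorem~\ref{thm:3.1}(i) then rules out finite-time blow-up, so the solution is global.

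All the steps are elementary once Proposition~\ref{prop:4.1} is available; the only place requiring care is the no-zero-crossing claim, which is exactly where the variational characterization $d(\omega ,c)=\mu (\omega ,c)$ and the positivity $\mu (\omega ,c)>0$ enter, and where the hypothesis $\gamma\geq 0$ (i.e.\ $b\geq -3/16$) is essential for discarding the $L^6$ contribution. I expect the main subtlety to be the borderline case $c=2\sqrt{\omega}$, where the natural ambient space is $X_{\omega ,c}$ rather than $H^1$: here one uses $H^1\subset X_{c^2/4,c}$ so that an $H^1$-solution with $\cK_{\omega ,c}=0$ remains admissible for $\mu(\omega ,c)$, while the square completion degenerates to $\cL_{\omega ,c}(\varphi)=\|\del_x(e^{-icx/2}\varphi)\|_{L^2}^2$ with the mass term vanishing, which does not affect the estimate.
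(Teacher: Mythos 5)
Your proposal is correct and follows essentially the same route as the paper: the same decomposition $\cS_{\omega,c}=\tfrac14\cK_{\omega,c}+\tfrac14\cL_{\omega,c}+\tfrac{\gamma}{64}\|\cdot\|_{L^6}^6$, the same no-zero-crossing argument via $d(\omega,c)=\mu(\omega,c)$ from Proposition~\ref{prop:4.1} (you merely spell out the admissibility step the paper leaves implicit), and the same square completion and triangle inequality yielding \eqref{eq:5.1}. No gaps.
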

\begin{proof}
Assume that $v_0\in\scA_{\omega ,c}^+$. Let $v\in C((-T_{\rm min},T_{\rm max}) , H^1 (\R) )$ be a maximal solution of \eqref{ME} with $v(0)=v_0$. If $\cK_{\omega ,c}(v_0)=0$, by Proposition \ref{prop:4.1}, we have $v_0 =0$. By uniqueness we have $v(t)=0$ for all $t\in\R$. Consider the case $\cK_{\omega ,c}(v_0)>0$. If there exists $t_{*}\in (-T_{\rm min},T_{\rm max})$ such that $\cK_{\omega ,c}(v(t_{*}))=0$, the above argument gives that $v\equiv0$, which is a contradiction. Since the function $t\mapsto\cK_{\omega ,c}(v(t))$ is continuous, we deduce that $\cK_{\omega ,c}(v(t))>0$ for all $t\in (-T_{\rm min},T_{\rm max})$. This implies that $\scA_{\omega ,c}^+$ is invariant under the flow of \eqref{ME}. Similarly one can prove that $\scA_{\omega ,c}^-$ is also invariant.

Next we prove that the initial data $v_0\in\scA_{\omega ,c}^+$ generates global and bounded solutions. By (\ref{eq:4.7}) and $v(t)\in\scA_{\omega ,c}^+$, we obtain that
\begin{align*}
\cS_{\omega ,c}(v_0)&=\cS_{\omega ,c}(v(t))\\
&=\frac{1}{4}\cK_{\omega ,c}(v(t))+\frac{1}{4}\cL_{\omega ,c}(v(t)) 
+\frac{\gamma}{64}\| v(t)\|_{L^6}^6\\
&\geq\frac{1}{4}\cL_{\omega ,c}(v(t))\\
&\geq \frac{1}{4}\l\|\del_x\l( e^{-i\frac{cx}{2}}v(t)\r) \r\|_{L^2}^2
\end{align*}
for all $t\in (-T_{\rm min},T_{\rm max})$. This implies that $T_{\rm min}=T_{\rm max}=\infty$. 
Moreover, we have
\begin{align*}
\| \del_x v(t)\|_{L^2}^2&\leq\l( \l\| \del_x v(t) -\frac{c}{2}iv(t)\r\|_{L^2}+\frac{|c|}{2}\| v(t)\|_{L^2}\r)^2 \\
&\leq 2\l\|\del_x\l( e^{-i\frac{cx}{2}}v(t)\r) \r\|_{L^2}^2+\frac{c^2}{2}\cM (v_0) \\
&\leq 8\cS_{\omega ,c}(v_0)+\frac{c^2}{2}\cM (v_0)
\end{align*}
for all $t\in\R$. This completes the proof.
\end{proof}
We are now in a position to complete the proof of Theorem \ref{thm:1.7}. For convenience we often use the notation $\mu :=\sqrt{\omega}$ in the proof.
\begin{proof}[Proof of Theorem \ref{thm:1.7}]
First we note that
\begin{align}
\label{eq:5.2}
\begin{array}{ll}
\ds\text{if}~b\geq 0,& \ds \max_{-1 <s\leq 1}d(1,2s)=d(1,2s^*),\\[5pt]
\ds\text{if}~-3/16<b\leq 0,& \ds \max_{-1 <s\leq 1}d(1,2s)=d(1,2),
\end{array}
\end{align}
which follows from Lemma \ref{lem:2.9}. From \eqref{eq:3.8} and Proposition \ref{prop:2.6} we have
\begin{align}
\label{eq:5.3}
2d(1,2s) =M(\phi_{1,2s})+sP(\phi_{1,2s}).
\end{align}
Therefore, from the definition of $M^*$, the relation \eqref{eq:5.2} is rewritten as
\begin{align}
\label{eq:5.4}
\max_{-1 <s\leq 1}2d(1,2s) = M^* (b)=M^*
\end{align}
for $b>-3/16$. 

(i) First we prove the claim on the set above the mass threshold $M^*$.  Assume by contradiction that there exists $\varphi\in H^1(\R)$ such that $\cM(\varphi)>M^*$ and $\varphi\in\scA_s^+ \cap\scA_s^-$ for some $s\in (-1,1]$. Then, there exist $\mu_1, \mu_2 >0$ such that
\begin{align*}
&\cS_{\mu_i^2,2s\mu_i}(\varphi) <d(\mu_i^2, 2s\mu_i)\quad \text{for}~i=1,2,\\
&\cK_{\mu_1^2 ,2s\mu_1}(\varphi) <0, ~\cK_{\mu_2^2 ,2s\mu_2}(\varphi) >0.
\end{align*}
We may assume that $0<\mu_1<\mu_2$. Here we set the function $f_s:\R^+\to\R$ by 
\begin{align}
\label{eq:5.5}
f_s(\mu):=&\cS_{\mu^2,2s\mu}(\varphi) -d(\mu^2, 2s\mu)\\
=&\cE(\varphi)+\frac{\mu^2}{2}\Bigl( \cM(\varphi) -2d(1,2s)\Bigr) +s\mu\cP(\varphi).
\notag
\end{align}
From \eqref{eq:5.4} we have $\cM(\varphi) -2d(1,2s)>0$, which yields that the function $f_s$ is strictly convex. In particular $J_s:=\{\mu>0 : f_s(\mu)<0\}$ is an open interval which contains $\mu_1$ and $\mu_2$. From the explicit formula of  $\cK_{\mu^2,2s\mu}(\varphi)$ (see \eqref{eq:4.3}), there exists a unique $\mu_0\in (\mu_1, \mu_2)$ such that $\cK_{\mu_0^2,2s\mu_0}(\varphi)=0$. Since $\mu_0\in J_s$, in conclusion we deduce that there exists $\mu_0 >0$ such that
\begin{align*}
\cS_{\mu_0^2,2s\mu_0}(\varphi) <d(\mu_0^2, 2s\mu_0), ~\cK_{\mu_0^2,2s\mu_0}(\varphi)=0.
\end{align*}
However, from Proposition \ref{prop:4.1} this yields that $\varphi =0$, which is absurd. Therefore, $\scA^+_s$ and $\scA^-_s$ are mutually disjoint on $\{\varphi\in H^1(\R): \cM(\varphi)> M^*\}$.

Next we consider the case $\cM(\varphi)=M^*$. We only consider the case $b\geq 0$ since the case $-3/16<b<0$ is treated similarly. From \eqref{eq:5.4} we obtain that $\cM(\varphi) >2d(1,2s)$ for any $s\in (-1,1]$ but $s\neq s^*$. Hence, when $s\neq s^*$, one can use the argument above in the same way. When $s=s^*$, the function \eqref{eq:5.5} in this case is equal to 
\begin{align*}
f_{s^*}(\mu)=&\cS_{\mu^2,2s^*\mu}(\varphi) -d(\mu^2, 2s^*\mu)
=\cE(\varphi) +s^*\mu\cP(\varphi).
\end{align*}
From this formula, we deduce that $J_{s^*}$ is an open interval if it is not empty. Hence the argument above still holds in this case. This completes the proof of (i).

(ii) Assume that $\cM (\varphi)<M^*$, or $\cM (\varphi)=M^*$ and $\cP (\varphi)<0$. We note that for any $\varphi\in H^1(\R)\setminus\{ 0\}$ there exists large $\omega >0$ such that
\begin{align}
\label{eq:5.6}
\cK_{\omega ,2s\sqrt{\omega}}(\varphi) 
&= \| \del_x \varphi\|_{L^2}^2 + \omega\| \varphi\|_{L^2}^2 
\\
&\quad +s \sqrt{\omega}\l( 2\rbra[i\del_x \varphi, \varphi] +\| \varphi\|_{L^4}^4 \r) -\frac{3}{16}\gamma\| \varphi\|_{L^6}^6> 0,
\notag
\end{align}
where $\omega$ depends on $s$ and $\varphi$. We also note that 
\begin{align}
\label{eq:5.7}
&\cS_{\omega ,2s\sqrt{\omega}}(\varphi) < d(\omega ,2s\sqrt{\omega}) 
\Leftrightarrow  \cE (\varphi)+s\sqrt{\omega} \cP (\varphi) < \frac{ \omega}{2} \bigl( 2d(1,2s)- \cM (\varphi) \bigr)  
\end{align}
for any $s\in (-1,1]$. When $b\geq 0$, if we set $s=s^*$, the last inequality in \eqref{eq:5.7} holds for large $\omega>0$ from \eqref{eq:5.2} and \eqref{eq:5.4}. Combined with (\ref{eq:5.6}), we deduce that $\varphi\in\scA_{s^*}^+$. When $-3/16<b\leq 0$, if we set $s=1$, $\varphi\in\scA_{1}^+$ is proved in the same way.

(iii)(a) From the definition of $\cS_{\omega ,c}$, we have
\begin{align*}
\cS_{\mu^2,2\mu}(e^{i\mu x}\psi )&=\frac{1}{2}\cL_{\mu^2,2\mu}(e^{i\mu x}\psi) 
+\frac{\mu}{4}\| \psi \|_{L^4}^4-\frac{\gamma}{32}\| \psi\|_{L^6}^6\\
&=\frac{1}{2}\| \del_x \psi\|_{L^2}^2 +\frac{\mu}{4}\| \psi\|_{L^4}^4-\frac{\gamma}{32}\| \psi \|_{L^6}^6.
\end{align*}
Since $d(\mu^2 ,2\mu)=\mu^2 d(1,2)$ and $d(1,2)>0$, we deduce that 
\begin{align*}
\cS_{\mu^2,2\mu}(e^{i\mu x}\psi ) < d(\mu^2 ,2\mu)
\end{align*}
for large $\mu>0$. Similarly, we have
\begin{align*}
\cK_{\mu^2,2\mu}(e^{i\mu x}\psi) &=\cL_{\mu^2,2\mu}(e^{i\mu x}\psi)
+\mu \| \psi\|_{L^4}^4-\frac{3}{16}\gamma\| \psi\|_{L^6}^6\\
&=\|\del_x\psi \|_{L^2}^2 +\mu\| \psi \|_{L^4}^4-\frac{3}{16}\gamma\| \psi\|_{L^6}^6 > 0
\end{align*}
for large $\mu>0$. This yields that $e^{i\mu x}\psi\in\scA^+_1$.

(b) First we note that
\begin{align*}
\cS_{\mu^2,2s\mu}(e^{is\mu x}\psi )&=\frac{1}{2}\| \del_x \psi\|_{L^2}^2 
+\frac{\mu^2}{2} \l( 1-s^2\r) \| \psi\|_{L^2}^2+ \frac{s\mu}{4}\| \psi\|_{L^4}^4-\frac{\gamma}{32}\| \psi \|_{L^6}^6,
\\
\cK_{\mu^2,2s\mu}(e^{is\mu x}\psi )&=\| \del_x \psi\|_{L^2}^2 
+\mu^2 \l( 1-s^2\r) \| \psi\|_{L^2}^2+ s\mu\| \psi\|_{L^4}^4-\frac{3}{16}\gamma\| \psi \|_{L^6}^6
\end{align*}
for any $s\in (-1,1]$. We fix large $\mu >0$ such that 
\begin{align*}
\cS_{\mu^2,-2\mu}(e^{-i\mu x}\psi )&=\frac{1}{2}\| \del_x \psi\|_{L^2}^2 
-\frac{\mu}{4}\| \psi\|_{L^4}^4-\frac{\gamma}{32}\| \psi \|_{L^6}^6<0,\\
\cK_{\mu^2,-2\mu}(e^{-i\mu x}\psi )&=\| \del_x \psi\|_{L^2}^2 
-\mu\| \psi\|_{L^4}^4-\frac{3}{16}\gamma\| \psi \|_{L^6}^6<0.
\end{align*}
We note that  
\begin{align*}
\cS_{\mu^2,-2\mu}(e^{is\mu x}\psi )&=\lim_{s\downarrow -1}\cS_{\mu^2,2s\mu}(e^{is\mu x}\psi ),\\
\cK_{\mu^2,-2\mu}(e^{-i\mu x}\psi )&=\lim_{s\downarrow -1}\cK_{\mu^2,2s\mu}(e^{is\mu x}\psi ),
\end{align*}
and $\lim_{s\downarrow -1}d(1,2s)=0$. Therefore, there exists small $\eps >0$ such that for any $s\in (-1,-1+\eps)$ we have
\begin{align*}
\cS_{\mu^2,2s\mu}(e^{is\mu x}\psi ) <d(\mu^2,2s\mu),~
\cK_{\mu^2,2s\mu}(e^{is\mu x}\psi )<0.
\end{align*}
This yields that $e^{is\mu x}\psi\in\scA^-_{s}$ for $s\in (-1,-1+\eps)$.

(iv) Assume $\cE(\varphi)<0$. We note that the functional $\cK_{\omega ,c}$ is rewritten as
\begin{align}
\label{eq:5.8}
\cK_{\omega ,c}(\varphi) =6\cE(\varphi) -2\| \del_x\varphi\|_{L^2}^2+\omega\|\varphi\|_{L^2}^2
+c\rbra[i\del_x\varphi, \varphi] +\frac{c}{2}\| \varphi\|_{L^4}^4 .
\end{align}
From \eqref{eq:5.7} and this formula, we deduce that for each $s\in (-1,1]$ there exists small $\omega >0$ such that 
\begin{align*}
\cS_{\omega ,2s\sqrt{\omega}}(\varphi) < d(\omega ,2s\sqrt{\omega}) , ~
\cK_{\omega ,2s\sqrt{\omega}}(\varphi)<0.
\end{align*}
This yields that $\varphi\in\bigcap_{-1<s\leq 1}\scA^-_s$. If we assume further that $\cM (\varphi)\geq M^*$, it follows from (i) that $\varphi\notin\bigcup_{-1< s\leq 1}\scA_s^+$. 

(v) Assume by contradiction that $\varphi\in\bigcup_{0\leq s\leq 1}\scA_s$ under the assumption
\begin{align*}
\cE (\varphi) \geq0, \cM(\varphi) \geq M^*~\text{and}~\cP (\varphi)\geq 0.
\end{align*}
Then, there exist $s_0\in [0,1]$ and $\omega_0 >0$ such that $\cS_{\omega_0 ,2s_0\sqrt{\omega_0}}(\varphi) < d(\omega_0 ,2s_0\sqrt{\omega_0})$. This is equivalent that
\begin{align*}
 \cE (\varphi)+\frac{ \omega_0}{2} \bigl( \cM (\varphi) -2d(1,2s_0) \bigr) +s_0\sqrt{\omega_0} \cP (\varphi) < 0.
\end{align*}
But this is absurd, since $\cM (\varphi) -2d(1,2s_0)\geq 0$ from \eqref{eq:5.4}. 

In the same way one can prove that 
\begin{align*}
\cE (\varphi) \geq0, \cM(\varphi) \geq M^*~\text{and}~\cP (\varphi)\leq 0 \Longrightarrow\varphi\notin
\bigcup_{-1< s\leq 0}\scA_s.
\end{align*}

(vi)(a) Let $b\geq 0$. Assume that $\cM(\varphi)=M^*$, $\cE(\varphi)\leq 0$ and $\cP(\varphi)\leq 0$ except for the case $\cE(\varphi)=\cP(\varphi)=0$. We note that the function $f_{s^*}$ defined by \eqref{eq:5.5} has the following formula:
\begin{align*}
f_{s^*}(\mu) =\cE(\varphi) +s^*\mu\cP(\varphi).
\end{align*}
From the assumption we note that $\{ \mu>0:f_{s^*}(\mu)<0\}=\R^+$. 
From the formulae \eqref{eq:5.6} and \eqref{eq:5.8}, we deduce that
\begin{align}
\label{eq:5.9}
\begin{array}{ll}
\cK_{\mu^2 ,2s^*\mu}(\varphi) >0 ~&\text{for large}~\mu>0, \\
\cK_{\mu^2 ,2s^*\mu}(\varphi) <0 ~&\text{for small}~\mu>0.
\end{array}
\end{align}
Therefore, there exists $\mu_0 >0$ such that
\begin{align*}
\cS_{\mu_0^2,2s^*\mu_0}(\varphi) <d(\mu_0^2, 2s^*\mu_0), ~\cK_{\mu_0^2,2s^*\mu_0}(\varphi)=0.
\end{align*}
However, from Proposition \ref{prop:4.1} this yields that $\varphi =0$, which is absurd. 

Now we consider the case $\cM(\varphi)=M^*$ and $\cE(\varphi)=\cP(\varphi)=0$. In this case we have $f_{s^*}\equiv 0$, which is equivalent that $\cS_{\mu^2,2s^*\mu}(\varphi) =d(\mu^2, 2s^*\mu)$ for any $\mu>0$. Since the statement \eqref{eq:5.9} still holds in this case, we deduce that there exists a unique $\mu_0 >0$ such that
\begin{align*}
\cS_{\mu_0^2,2s^*\mu_0}(\varphi) =d(\mu_0^2, 2s^*\mu_0), ~\cK_{\mu_0^2,2s^*\mu_0}(\varphi)=0.
\end{align*}
From Proposition \ref{prop:4.1} again, there exist $\theta , y\in\R$ such that $\varphi =e^{i\theta}\varphi_{\mu_0^2 ,2s^*\mu_0 }(\cdot -y)$. This completes the proof of (vi-a).

(b) In the same way as in the proof of (vi-a), we deduce that there exist no $\varphi\in H^1(\R)$ such that $\cM(\varphi)=M^*$, $\cE(\varphi)\leq 0$ and $\cP(\varphi)\leq 0$ except for the case $\cE(\varphi)=\cP(\varphi)=0$. We consider the case $\cM(\varphi)=M^*$ and $\cE(\varphi)=\cP(\varphi)=0$. Similarly, one can prove that there exists a unique $\mu_0 >0$ such that
\begin{align*}
\cS_{\mu_0^2,2\mu_0}(\varphi) =d(\mu_0^2, 2\mu_0), ~\cK_{\mu_0^2,2\mu_0}(\varphi)=0.
\end{align*}
From Proposition \ref{prop:4.1}, there exist $\theta , y\in\R$ such that $\varphi =e^{i\theta}\varphi_{\mu_0^2 ,2\mu_0 }(\cdot -y)$. But this is a contradiction, since $\cP(\varphi_{1,2})>0$ from Proposition \ref{prop:1.3}. This completes the proof. 
\end{proof}
In the case $b=-3/16$, we have the following result.
\begin{proposition}
\label{prop:5.2}
Let $b=-3/16$. Then we have
\begin{align*}
\bigcup_{-1< s<0}\scA_s =\bigcup_{-1< s<0}\scA_s^+=H^1(\R) .
\end{align*}
\end{proposition}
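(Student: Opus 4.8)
The plan is to reduce everything to the single inclusion $H^1(\R)\subseteq\bigcup_{-1<s<0}\scA_s^+$. Since $\scA_s^+\subseteq\scA_s\subseteq H^1(\R)$ holds for every $s$, this one inclusion immediately yields both stated equalities, so I would spend no effort on the reverse inclusions.

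First I would record the shape of the threshold $d(1,2s)$ in the present case $b=-3/16$, where $\gamma=0$ and hence $\cE(\varphi)=\tfrac12\|\del_x\varphi\|_{L^2}^2\ge0$. Feeding the mass formula of Proposition~\ref{prop:2.2}(ii) and the momentum formula \eqref{eq:2.32} into the identity \eqref{eq:5.3} gives, after a short computation,
\[
d(1,2s)=\frac{1-s^2}{3}\,M(\phi_{1,2s})=\frac{4(1-s^2)^{3/2}}{-3s},\qquad s\in(-1,0),
\]
so $d(1,2s)>0$ and, decisively, $d(1,2s)\to\infty$ as $s\uparrow0$. This blow-up is special to $\gamma=0$ and is exactly what will let me accommodate data of arbitrarily large mass.

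Next, fixing an arbitrary $\varphi\in H^1(\R)$ (the case $\varphi=0$ being immediate, since then $\cS_{\omega,c}(0)=0<d(\omega,c)$ and $\cK_{\omega,c}(0)=0$), I would first choose $s\in(-1,0)$ so close to $0$ that $d(1,2s)>\tfrac12\cM(\varphi)$, which is possible by the divergence above. Writing $\mu=\sqrt{\omega}$ and using $d(\omega,2s\mu)=\omega\,d(1,2s)$, I would then analyze the two defining conditions as $\mu\to\infty$:
\[
\cS_{\omega,2s\mu}(\varphi)-d(\omega,2s\mu)=\cE(\varphi)+s\mu\,\cP(\varphi)+\mu^2\Bigl(\tfrac12\cM(\varphi)-d(1,2s)\Bigr),
\]
whose $\mu^2$-coefficient is now strictly negative, forcing the left side to $-\infty$; and, from \eqref{eq:4.3} with $\gamma=0$,
\[
\cK_{\omega,2s\mu}(\varphi)=\|\del_x\varphi\|_{L^2}^2+\mu^2\|\varphi\|_{L^2}^2+2s\mu\rbra[i\del_x\varphi,\varphi]+s\mu\|\varphi\|_{L^4}^4,
\]
whose leading term $\mu^2\|\varphi\|_{L^2}^2$ is positive. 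Hence for all sufficiently large $\omega$ both $\cS_{\omega,2s\mu}(\varphi)<d(\omega,2s\mu)$ and $\cK_{\omega,2s\mu}(\varphi)>0$ hold, i.e. $\varphi\in\scA^+_{\omega,2s\sqrt\omega}\subseteq\scA_s^+$.

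I do not expect a genuine obstacle: once the divergence $d(1,2s)\to\infty$ is in place, the remaining work is the elementary two-scale asymptotics in $\mu$, which simultaneously produces positivity of $\cK$ and the strict inequality $\cS<d$. The only point requiring care is to fix $s$ \emph{before} sending $\omega\to\infty$, so that the sign of the $\mu^2$-coefficient is pinned down; no compactness or the variational characterization of Proposition~\ref{prop:4.1} is needed here.
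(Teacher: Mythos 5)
Your proposal is correct and follows essentially the same route as the paper: compute $d(1,2s)=\frac{1-s^2}{3}M(\phi_{1,2s})$ via \eqref{eq:5.3} and the $\gamma=0$ formulas, use the divergence $d(1,2s)\to\infty$ as $s\uparrow 0$ to fix $s_0$ with $2d(1,2s_0)>\cM(\varphi)$, and then send $\omega\to\infty$ to get both $\cS_{\omega,2s_0\sqrt{\omega}}(\varphi)<d(\omega,2s_0\sqrt{\omega})$ and $\cK_{\omega,2s_0\sqrt{\omega}}(\varphi)>0$. The only cosmetic difference is that you make the $\varphi=0$ case and the explicit formula $\frac{4(1-s^2)^{3/2}}{-3s}$ fully explicit, which the paper leaves implicit.
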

\begin{proof}
Given $\varphi\in H^1(\R)$. From \eqref{eq:5.3} and Proposition \ref{prop:2.4} we have
\begin{align*}
d(1,2s) =\frac{1-s^2}{3}M(\phi_{1,2s})
\end{align*}
for $s\in(-1,0)$. It follows from Proposition \ref{prop:2.2} that $d(1,2s)\to\infty$ as $s\to 0-$. Hence, for $\varphi\in H^1(\R)$ one can take $s_0\in (-1,0)$ such that
\begin{align}
\label{eq:5.10}
2\,d(1,2s_0)- \cM (\varphi)  >0.
\end{align}
We note that
\begin{align*}
\cS_{\omega ,2s_0\sqrt{\omega}}(\varphi) < d(\omega ,2s_0\sqrt{\omega})
\Leftrightarrow  \cE (\varphi)+{s_0\sqrt{\omega}}\cP (\varphi)  
< \frac{\omega}{2} \l( 2\,d(1,2s_0)- \cM (\varphi) \r) . 
\end{align*}
By \eqref{eq:5.10} the last inequality holds for large $\omega >0$. Combined with \eqref{eq:5.6}, we deduce that $\varphi\in\scA_{s_0}^+$. This completes the proof.
\end{proof}
\section{Potential well theory for the Hamiltonian form}
\label{sec:6}
When $b\geq 0$ one can give a variational characterization of solitons to the equation \eqref{eq:1.1} on the Nehari manifold. Based on this variational characterization one can establish potential well theory for \eqref{eq:1.1} similarly as in the case of \eqref{ME}.
We recall that the equation \eqref{eq:1.1} has 
Hamiltonian structure \eqref{eq:1.3} which is useful when one studies problems of solitons and related topics. Hence it would be worthwhile to restate potential well theory for the Hamiltonian form. Here we only consider the case of \eqref{DNLS} for simplicity. After we organize potential well theory for \eqref{DNLS}, we give a proof of Theorem \ref{thm:1.12}.

Let us consider the case $b=0$. We define the functional by $K_{\omega ,c}(\varphi):=\l.\frac{d}{d\lambda}S_{\omega ,c}(\lambda\varphi)\r|_{\lambda =1}$, which has the following explicit formula:
\begin{align}
\label{eq:6.1}
K_{\omega ,c}(\varphi) =\| \del_x\varphi\|_{L^2}^2+\omega\|\varphi\|_{L^2}^2
+c\rbra[i\del_x\varphi, \varphi] -\rbra[ i|\varphi|^2\del_x\varphi, \varphi].
\end{align}
We consider the following subsets of the energy space: 
\begin{align*}
\scK_{\omega ,c}:=& \l\{ \varphi\in H^1(\R ):  S_{\omega, c}(\varphi ) < S_{\omega ,c}(\varphi_{\omega, c})\r\}, \\ 
\scK_{\omega, c}^+ :=&\l\{ \varphi\in\scK_{\omega, c} :K_{\omega, c}(\varphi) \geq 0 \r\},
\scK_{\omega, c}^- :=\l\{ \varphi\in\scK_{\omega, c} :K_{\omega ,c}(\varphi) < 0\r\},\\
\scK_{s}:=& \bigcup_{ \substack{\omega>0 } } \scK_{\omega ,2s\sqrt{\omega}},~
\scK_{s}^{\pm}:= \bigcup_{ \substack{\omega>0 } } \scK_{\omega ,2s\sqrt{\omega}}^{\pm}
\quad\text{for}~s\in (-1,1].
\end{align*}
Now we can rewrite Theorem \ref{thm:1.7} as the following claim for \eqref{DNLS}:
\begin{theorem}
\label{thm:6.1}
Let $(\omega ,c)$ satisfy $-2\sqrt{\omega}<c\leq 2\sqrt{\omega}$. Then, each of $\scK_{\omega ,c}^{+}$ and 
$\scK_{\omega ,c}^{-}$ is invariant under the flow of \eqref{DNLS}. If $u_0 \in\scK_{\omega ,c}^+$, then the $H^1(\R)$-solution $u$ of \eqref{DNLS} with $u(0)=u_0$ exists globally both forward and backward in time, and satisfies the following uniform estimate\textup{:}
\begin{align}
\label{eq:6.2}
\| \del_x v\|_{L^{\infty}(\R ,L^2)}^2 \leq 8S_{\omega ,c}(u_0)+\frac{c^2}{2} M (u_0).
\end{align}
Moreover the following statements hold\textup{:}
\begin{enumerate}[\rm (i)]
\setlength{\itemsep}{3pt}
\item For each $s\in (-1,1]$, $\scK^+_{s}$ and $\scK^-_s$ have no elements in common on the set $\{ \varphi\in H^1(\R) : M(\varphi ) \geq 4\pi\}$. 

\item  If $M (\varphi)<4\pi$, or $M (\varphi)=4\pi$ and $P (\varphi)<0$, then $\varphi\in\scK^+_{1}$.

\item For given $\psi\in H^1(\R)\setminus\{ 0\}$ the following properties hold\textup{:}
\begin{enumerate}[\rm (a)]
\item There exists $\mu_0 =\mu_0 (\psi)>0$ such that if $\mu\geq\mu_0$, then $e^{i\mu x}\psi\in \scK^+_1$.

\item There exist $\eps\in (0,1)$ and large $\mu >0$ such that $e^{-i(1-\eps )\mu x}\psi\in \scK^-_{-(1-\eps)}$, where $\eps$ and $\mu$ depend on $\psi$.
\end{enumerate}

\item Assume $E(\varphi)<0$. Then $\varphi\in\bigcap_{-1<s\leq 1}\scK^-_s$. 
In particular, if $M(\varphi)\geq 4\pi$, then $\varphi\not\in\bigcup_{-1< s\leq 1}\scK_s^+$.

\item Assume $E(\varphi) \geq 0$ and $M (\varphi) \geq 4\pi$. 
If $P(\varphi) \geq 0\,(\text{resp.}\,P(\varphi)\leq 0)$, then $\varphi\not\in\bigcup_{0\leq s\leq 1}\scK_s\,(\text{resp.}\,\varphi\not\in\bigcup_{-1<s\leq 0}\scK_s)$. In particular, if $P(\varphi)=0$, then $\varphi\not\in\bigcup_{-1< s\leq 1}\scK_s$.

\item Assume $M (\varphi)=4\pi$. $E(\varphi)=P(\varphi)=0$ if and only if there exist $\theta , y\in\R$ and $\omega >0$ such that $\varphi =e^{i\theta}\phi_{\omega ,2\sqrt{\omega} }(\cdot -y)$. Moreover, there exists no $\varphi\in H^1(\R)$ such that $E(\varphi)<0$ and $P(\varphi)\leq 0$, or $E(\varphi)\leq 0$ and $P(\varphi) <0$.

\end{enumerate}
\end{theorem}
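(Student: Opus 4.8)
The plan is to run the proof of Theorem \ref{thm:1.7} essentially verbatim for the Hamiltonian form, with the quadruple $(S_{\omega ,c}, K_{\omega ,c}, \scK_{\omega ,c}, \scK_{\omega ,c}^{\pm})$ playing the role of $(\cS_{\omega ,c}, \cK_{\omega ,c}, \scA_{\omega ,c}, \scA_{\omega ,c}^{\pm})$, specialized to $b=0$, so that $\gamma =1$. Comparing the identity $S_{\omega ,c}=\cS_{\omega ,c}\circ\cG_{1/4}$ across all $(\omega ,c)$ shows that $\cG_{1/4}$ preserves $E$, $M$ and $P$ individually and maps $\phi_{\omega ,c}$ to $\varphi_{\omega ,c}$. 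From Proposition \ref{prop:2.7} (ii) and Lemma \ref{lem:2.9} (ii) one has $s^*(0)=1$ and $P(\phi_{1,2})=0$, whence $M^*(0)=\max_{-1<s\leq 1}2d(1,2s)=M(\phi_{1,2})+P(\phi_{1,2})=4\pi$. The one genuinely new ingredient is the analog of Proposition \ref{prop:4.1} on the Nehari manifold $\{K_{\omega ,c}=0\}$ of \eqref{DNLS}: I would show that $\phi_{\omega ,c}$ minimizes $S_{\omega ,c}$ subject to $K_{\omega ,c}=0$, that the minimum equals $d(\omega ,c)$, and that the minimizers are exactly $\{e^{i\theta}\phi_{\omega ,c}(\cdot -y):\theta ,y\in\R\}$. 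It is tempting to deduce this from Proposition \ref{prop:4.1} through the gauge $v=\cG_{1/4}(u)$, but this fails: although the action, mass, momentum and energy are gauge invariant, $\cG_{1/4}$ does not send $\{K_{\omega ,c}=0\}$ onto $\{\cK_{\omega ,c}=0\}$, since the scaling $\lambda u$ does not commute with the nonlinear gauge and the two Nehari functionals differ after substitution.

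For the variational characterization I would instead argue directly, exactly as in Section \ref{sec:4}. The key algebraic input is
\begin{align*}
S_{\omega ,c}(u)=\tfrac14 K_{\omega ,c}(u)+\tfrac14\cL_{\omega ,c}(u),\qquad
\cL_{\omega ,c}(u)=\|\del_x u\|_{L^2}^2+\omega\|u\|_{L^2}^2+c\rbra[i\del_x u, u],
\end{align*}
the analog of \eqref{eq:4.7} but now with no $L^6$ term, where $\cL_{\omega ,c}$ is the coercive quadratic form of Lemma \ref{lem:4.2} (i) when $\omega >c^2/4$ and is controlled in $\dot H^1\cap L^4$ when $c=2\sqrt{\omega}$. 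With this decomposition the counterparts of Lemma \ref{lem:4.2} and Proposition \ref{prop:4.7} (boundedness and non-vanishing of a minimizing sequence, profile extraction by Lemmas \ref{lem:4.5}--\ref{lem:4.6}, and the splitting identities) go through, and the set of nontrivial critical points is identified with $\{e^{i\theta}\phi_{\omega ,c}(\cdot -y)\}$ by transporting Lemma \ref{lem:4.3} through $\cG_{1/4}$, which commutes with phase rotations and translations. The hard part will be the Brezis--Lieb splitting for the interaction term $\rbra[i|u|^2\del_x u, u]$ occurring in $K_{\omega ,c}$ and in $E$; this cubic--derivative term is precisely what the gauge was designed to remove and is the reason \eqref{ME} was introduced. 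I would therefore invoke the concentration-compactness arguments of \cite{CO06}, which carry out this minimization for the bright solitons, together with \cite{FHI17} for the algebraic soliton $c=2\sqrt{\omega}$.

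Granting the characterization, invariance of $\scK_{\omega ,c}^{\pm}$ under the \eqref{DNLS} flow and the uniform bound \eqref{eq:6.2} follow exactly as in Lemma \ref{lem:5.1}: continuity of $t\mapsto K_{\omega ,c}(u(t))$ together with the fact that $K_{\omega ,c}=0$ on $\scK_{\omega ,c}$ forces $u=0$ yields invariance, while $S_{\omega ,c}(u_0)\geq\tfrac14\cL_{\omega ,c}(u(t))\geq\tfrac14\l\|\del_x\l(e^{-i\frac{c}{2}x}u(t)\r)\r\|_{L^2}^2$ and the triangle inequality give the gradient bound.

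Finally, the six assertions are obtained by repeating the corresponding steps of the proof of Theorem \ref{thm:1.7} with $b=0$, $\gamma=1$, $s^*=1$, $M^*=4\pi$. Assertions (i)--(iii) use the convexity in $\mu=\sqrt{\omega}$ of $f_s(\mu)=S_{\mu^2,2s\mu}(u)-d(\mu^2,2s\mu)$, the relation $\max_s 2d(1,2s)=4\pi$, the explicit scaling of $K_{\omega ,c}(\lambda u)$, and the new characterization to rule out the forbidden configurations, exactly as in Theorem \ref{thm:1.7} (i)--(iii). Assertions (iv) and (v) rest on the gauge-invariant quantities $E,M,P,d$, the analog of \eqref{eq:5.8} rewriting $K_{\omega ,c}$ through $E$, and assertion (i), so they transcribe directly. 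For (vi) I would use \eqref{eq:2.37} together with $P(\phi_{1,2})=0$ to note $E(\phi_{\omega ,2\sqrt{\omega}})=0$; then the argument of Theorem \ref{thm:1.7} (vi-a) with $s^*=1$ identifies the data with $M=4\pi$, $E=P=0$ as precisely the algebraic solitons $e^{i\theta}\phi_{\omega ,2\sqrt{\omega}}(\cdot -y)$ and excludes the cases $E<0,\,P\leq0$ and $E\leq0,\,P<0$ by the same scaling argument.
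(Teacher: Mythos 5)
Your proposal is correct and follows essentially the same route as the paper: the paper itself reduces Theorem \ref{thm:6.1} to the direct variational characterization of $\phi_{\omega,c}$ on the Nehari manifold $\{K_{\omega,c}=0\}$ (Proposition \ref{prop:6.2}, imported from \cite{CO06, FHI17}) and then repeats the Section \ref{sec:5} arguments with $b=0$, $\gamma=1$, $s^*=1$, $M^*=4\pi$, exactly as you outline. Your observations that the gauge does not intertwine the two Nehari manifolds and that the identity $S_{\omega,c}=\tfrac14 K_{\omega,c}+\tfrac14\cL_{\omega,c}$ (with the cubic--derivative term cancelling) supplies the needed coercivity are both accurate and consistent with what the paper tacitly uses.
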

Theorem \ref{thm:6.1} characterizes $4\pi$-mass condition for \eqref{DNLS} from the viewpoint of potential well theory. We note that algebraic solitons give the boundary of both $\scK^+_1$ and $\scK^-_1$. This is a notable property of algebraic solitons, which is analogous to the one of standing  waves for \eqref{NLS}.

We also note that Theorem \ref{thm:1.12} gives another interesting property of algebraic solitons. For the proof of Theorem \ref{thm:1.12}, we use the following lemma on the concentration compactness, which is corresponding to Proposition \ref{prop:4.7}.
\begin{proposition}[\cite{CO06, FHI17}]
\label{prop:6.2}
Let $(\omega ,c)$ satisfy $-2\sqrt{\omega}<c\leq 2\sqrt{\omega}$. Let $X_{\omega ,c}$ be defined by \eqref{eq:4.2}. If a sequence $
\thickmuskip=3mu \{\varphi_n\}\subset X_{\omega,c}$ satisfies
\begin{align*}
S_{\omega ,c}(\varphi_n ) \to d(\omega ,c)~\text{and}~K_{\omega ,c}(\varphi_n ) \to 0~\text{as}~n\to\infty ,
\end{align*}
then there exist a sequence $\{ y_n\}\subset\R$ and $\theta_0,y_0\in\R$ such that $\{ \varphi_n(\cdot -y_n)\}$ has a subsequence that converges to $e^{i\theta_0}\phi_{\omega ,c}(\cdot -y_0)$ strongly in $X_{\omega ,c}$.
\end{proposition}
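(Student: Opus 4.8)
The plan is to run the concentration--compactness scheme of Proposition~\ref{prop:4.7} with the \eqref{DNLS} functionals $S_{\omega ,c}$ and $K_{\omega ,c}$ in place of $\cS_{\omega ,c}$ and $\cK_{\omega ,c}$, and then identify the limit with the soliton orbit through the variational characterization of Proposition~\ref{prop:4.1}. The first observation is that the combination
\begin{align*}
I_{\omega ,c}(\varphi )&:=S_{\omega ,c}(\varphi )-\tfrac14 K_{\omega ,c}(\varphi )\\
&=\tfrac14\l( \| \del_x\varphi\|_{L^2}^2+\omega\| \varphi\|_{L^2}^2+c\rbra[i\del_x\varphi ,\varphi]\r)=:\tfrac14 L_{\omega ,c}(\varphi )
\end{align*}
is a positive-definite quadratic form: the cubic-derivative term $N(\varphi ):=\rbra[i|\varphi|^2\del_x\varphi ,\varphi]$ cancels exactly in $S_{\omega ,c}-\tfrac14 K_{\omega ,c}$, just as the power terms canceled in $\cS_{\omega ,c}-\tfrac14\cK_{\omega ,c}$. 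As in Lemma~\ref{lem:4.2}, $L_{\omega ,c}$ is coercive on $H^1(\R )$ when $\omega >c^2/4$ and equals $\| \del_x(e^{-i\frac{cx}{2}}\varphi )\|_{L^2}^2$ when $c=2\sqrt{\omega}$, so it controls the $X_{\omega ,c}$ norm; moreover $d(\omega ,c)>0$.

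Given a sequence as in the statement, $I_{\omega ,c}(\varphi_n)=S_{\omega ,c}(\varphi_n)-\tfrac14 K_{\omega ,c}(\varphi_n)\to d(\omega ,c)$, which bounds $\{\varphi_n\}$ in $X_{\omega ,c}$ (in the endpoint case one also extracts $L^4$-boundedness from $K_{\omega ,c}(\varphi_n)\to 0$, as in Step~1 of Proposition~\ref{prop:4.7}). A non-vanishing step gives $\limsup_n\| \varphi_n\|_{L^6}>0$: if this norm tended to zero, then $N(\varphi_n)\to 0$ (by $|N(\varphi )|\le\| \varphi\|_{L^6}^3\| \del_x\varphi\|_{L^2}$ and boundedness), so $L_{\omega ,c}(\varphi_n)=K_{\omega ,c}(\varphi_n)+N(\varphi_n)\to 0$ and $I_{\omega ,c}(\varphi_n)\to 0$, contradicting $d(\omega ,c)>0$. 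By Lemma~\ref{lem:4.5} there are translations $\{y_n\}$ and a nonzero weak limit $v_n:=\varphi_n(\cdot -y_n)\wto v\neq 0$ in $X_{\omega ,c}$, with $v_n\to v$ a.e.

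Next I would establish the splitting relations. For the quadratic form $I_{\omega ,c}=\tfrac14 L_{\omega ,c}$ the decomposition $I_{\omega ,c}(v_n)-I_{\omega ,c}(v_n-v)-I_{\omega ,c}(v)\to 0$ is the standard weak-convergence identity. For $K_{\omega ,c}=L_{\omega ,c}-N$ the quadratic part splits in the same way, so everything reduces to the cubic-derivative Brezis--Lieb identity $N(v_n)-N(v_n-v)-N(v)\to 0$. With these in hand, Steps~4--5 of Proposition~\ref{prop:4.7} go through verbatim: if $K_{\omega ,c}(v)>0$ then $K_{\omega ,c}(v_n-v)<0$ eventually, and the strict inequality $d(\omega ,c)<I_{\omega ,c}$ on $\{K_{\omega ,c}<0\}$ (the analog of Lemma~\ref{lem:4.2}(iii)) forces $I_{\omega ,c}(v)\le 0$, i.e. $v=0$, a contradiction; hence $K_{\omega ,c}(v)\le 0$, and weak lower semicontinuity of $I_{\omega ,c}$ together with $I_{\omega ,c}(v_n)\to d(\omega ,c)$ yields $I_{\omega ,c}(v)=d(\omega ,c)$ and $K_{\omega ,c}(v)=0$. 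Thus $v$ is a minimizer on the Nehari manifold and $v_n\to v$ strongly in $X_{\omega ,c}$. Finally, the \eqref{DNLS} instance of Proposition~\ref{prop:4.1} (the minimizers on the Nehari manifold coincide with the soliton orbit) gives $v=e^{i\theta_0}\phi_{\omega ,c}(\cdot -y_0)$ for some $\theta_0 ,y_0\in\R$, which is the assertion.

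The main obstacle is the cubic-derivative Brezis--Lieb identity $N(v_n)-N(v_n-v)-N(v)\to 0$: since $N$ is not a power functional, Lemma~\ref{lem:4.6} does not apply directly, and one must prove it by a pointwise argument exploiting $v_n\to v$ a.e. together with the $X_{\omega ,c}$-boundedness. This is precisely the difficulty that the gauge transformation removes, since applying $\cG_{1/4}$ turns the cubic-derivative nonlinearity into the pure power terms of \eqref{ME}; an alternative, and arguably cleaner, route is therefore to transfer the minimizing sequence to the gauge-equivalent pure-power problem, invoke Proposition~\ref{prop:4.7} there, and pull the conclusion back through $\cG_{1/4}$, the only care being that $\cG_{1/4}$ does not commute with the scaling $\lambda\varphi$, so the correspondence of the Nehari constraints must be checked rather than assumed. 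A secondary delicate point is the endpoint case $c=2\sqrt{\omega}$, where one works in $\dot{H}^1(\R )\cap L^4(\R )$ and the algebraic soliton is the relevant profile.
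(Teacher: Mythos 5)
The paper itself does not prove this proposition: it is imported verbatim from \cite{CO06, FHI17}, so the only in-paper material to compare against is the proof of Proposition \ref{prop:4.7}. Your direct route is structurally the right one and even identifies a genuine simplification the gauge-transformed setting lacks: since the derivative-cubic term $N(\varphi ):=\rbra[i|\varphi|^2\del_x\varphi ,\varphi]$ is homogeneous of degree four, $S_{\omega ,c}-\tfrac14K_{\omega ,c}=\tfrac14L_{\omega ,c}$ is exactly quadratic, so coercivity, weak lower semicontinuity, and the analogue of Lemma \ref{lem:4.2}(iii) (via $K_{\omega ,c}(\lambda\varphi )=\lambda^2L_{\omega ,c}(\varphi )-\lambda^4N(\varphi )$) all come for free. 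But the one genuinely non-routine step is precisely the one you leave unproved: the splitting $N(v_n)-N(v_n-v)-N(v)\to 0$. This is not a cosmetic gap — Lemma \ref{lem:4.6} does not apply to $N$, and the expansion produces cross terms of the type $\int |v_n|^2 v\,\overline{\del_x(v_n-v)}\,dx$, where the derivative factor only converges weakly; one must pair it against a factor that converges \emph{strongly} in $L^2$ (via a.e.\ convergence plus domination by the fixed profile $v$), and other cross terms are handled by weak $L^2$ convergence of $|v_n|^2v_n$ against the fixed function $\del_x v$. This decomposition is the technical heart of the argument in \cite{CO06}, and asserting it can be done ``by a pointwise argument'' is not a proof of it.

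Your fallback route through $\cG_{1/4}$ does not close the gap either, and the obstruction is stronger than the phrasing ``must be checked rather than assumed'' suggests. A direct computation with $v=\cG_{1/4}(\varphi )$ gives $\cK_{\omega ,c}(v)=K_{\omega ,c}(\varphi )+\tfrac12N(\varphi )-\tfrac{c}{4}\|\varphi\|_{L^4}^4+\tfrac18\|\varphi\|_{L^6}^6$, and the correction has no sign; so the hypothesis $K_{\omega ,c}(\varphi_n)\to 0$ does not yield $\cK_{\omega ,c}(v_n)\to 0$, and Proposition \ref{prop:4.7} cannot be invoked on the transformed sequence. (Only the action transfers cleanly, via $S_{\omega ,c}=\cS_{\omega ,c}\circ\cG_{1/4}$.) A smaller point: in the endpoint case $c=2\sqrt{\omega}$ the $L^4$-bound in Step 1 does not read off from $K_{\omega ,c}$ as directly as from $\cK_{\omega ,c}$, since $K_{\omega ,c}$ contains $N$ rather than a signed $\|\cdot\|_{L^4}^4$ term; one has to rewrite $N$ in the variable $e^{-i\frac{cx}{2}}\varphi$ and absorb the remainder by Gagliardo--Nirenberg. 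In summary: correct strategy, correct diagnosis of where the work lies, but the decisive lemma is stated rather than proved.
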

\begin{proof}[Proof of Theorem \ref{thm:1.12}]
It is enough to prove that for any sequence ${t_n}$ such that $t_n\to T^*$, there exist a subsequence (denote it again by $\{ t_n\}$), and sequences $y(t_n)$ and $\theta(t_n)$ such that 
\begin{align}
\label{eq:6.3}
e^{i\theta(t_n)}u_{\lambda (t_n)}(t_n, \cdot +y(t_n))\to \phi_{1,2}~\text{in}~H^1(\R)~\text{as}~t_n\to T^*,
\end{align}
where $u_{\lambda(t)}(t,x) :=\lambda(t)^{1/2}u(t, \lambda(t)x)$.  
Since $\lambda (t_n)\to 0$ as $t_n\to T^*$, we obtain that
\begin{align*}
E(u_{\lambda(t_n)}(t_n))&=\lambda (t_n)^2E(u_0)  \to 0,\\
P(u_{\lambda(t_n)}(t_n))&=\lambda (t_n)P(u_0) \to 0,
\end{align*}
and $M(u_{\lambda(t_n)}(t_n))=M(u_0)=4\pi$ for any $n\in\N$. Hence we have
\begin{align}
\label{eq:6.4}
S_{1,2}(u_{\lambda (t_n)}(t_n))&=E(u_{\lambda(t_n)}(t_n))+\frac{1}{2}M(u_{\lambda(t_n)}(t_n))+P(u_{\lambda(t_n)}(t_n))\\
&\underset{t_n\to T^*}{\longrightarrow}\frac{1}{2}\cdot 4\pi =d(1,2).
\notag
\end{align}
We note that the functional $K_{\omega ,c}$ is rewritten as
\begin{align*}
K_{\omega ,c}(\varphi) =-\|\del_x\varphi\|_{L^2}^2 +4E(\varphi)+\omega M(\varphi)+cP(\varphi).
\end{align*}
Since $E(\phi_{1,2})=P(\phi_{1,2})=0$, we have
\begin{align*}
0=K_{1 ,2}(\phi_{1,2})=-\|\del_x\phi_{1,2}\|_{L^2}^2 +M(\phi_{1,2}).
\end{align*}
Therefore, we deduce that
\begin{align}
\label{eq:6.5}
K_{1,2}(u_{\lambda (t_n)}(t_n))&=-\lambda(t_n)^2\| \del_xu(t_n) \|_{L^2}^2+
4E(u_{\lambda(t_n)}(t_n))\\
&\quad +M(u_{\lambda(t_n)}(t_n) )+2P(u_{\lambda(t_n)}(t_n))
\notag\\
&\underset{t_n\to T^*}{\longrightarrow} -\|\del_x\phi_{1,2}\|_{L^2}^2 +M(\phi_{1,2})=0.
\notag
\end{align}
Therefore, by \eqref{eq:6.4}, \eqref{eq:6.5} and Proposition \ref{prop:6.2}, 
there exist a subsequence of $\{ t_n\}$, $\theta (t_n)$ and $y(t_n)$ such that 
\begin{align*}
g(t_n):=e^{i\theta(t_n)}u_{\lambda (t_n)}(t_n, \cdot +y(t_n))\underset{t_n\to T^*}{\longrightarrow} \phi_{1,2}~\text{in}~X_{1,2}.
\end{align*}
In particular we have
\begin{align}
\label{eq:6.6}
e^{-ix}g(t_n)\to e^{-ix}\phi_{1,2}~\text{in}~\dot{H}^1(\R),
\\
\label{eq:6.7}
e^{-ix}g(t_n)\wto e^{-ix}\phi_{1,2}~\text{weakly in}~L^2(\R).
\end{align}
From the mass conservation, we note that
\begin{align}
\label{eq:6.8}
M(g(t_n))=M(u_{\lambda_n}(t_n))=4\pi=M(\phi_{1,2})
\end{align}
for any $n\in\N$. Combined with \eqref{eq:6.7}, we obtain that
\begin{align}
\label{eq:6.9}
e^{-ix}g(t_n)\to e^{-ix}\phi_{1,2}~\text{strongly in}~L^2(\R).
\end{align}
From \eqref{eq:6.6} and \eqref{eq:6.9}, we obtain \eqref{eq:6.3}. This completes the proof. 
\end{proof}

\section*{Acknowledgments}
The results of this paper were mostly obtained when the author was a PhD student at Waseda University. The author would like to thank his thesis adviser Tohru Ozawa for constant encouragements.
The author is also grateful to Noriyoshi Fukaya and Takahisa Inui for useful discussions, and to Nobu Kishimoto, Kenji Nakanishi, and Yoshio Tsutsumi for helpful comments. This work was supported by JSPS KAKENHI Grant Numbers JP17J05828, JP19J01504, and Top Global University Project, Waseda University.

\end{document}